\newtheorem{theorem}{Theorem}[section]
\newtheorem{lemma}[theorem]{Lemma}
\newtheorem{corollary}[theorem]{Corollary}
\newtheorem{proposition}[theorem]{Proposition}
\newtheorem{question}{Question}
\theoremstyle{definition}
\newtheorem{definition}[theorem]{Definition}
\theoremstyle{remark}
\newtheorem{remark}[theorem]{Remark}
\theoremstyle{example}
\DeclareFontFamily{OML}{rsfs}{\skewchar\font'177}
\DeclareFontShape{OML}{rsfs}{m}{n}{ <5> <6> rsfs5 <7> <8> <9>
rsfs7 <10> <10.95> <12> <14.4> <17.28> <20.74> <24.88> rsfs10 }{}
\DeclareMathAlphabet{\mathfs}{OML}{rsfs}{m}{n}
\newcommand{\BC}{{\mathbb{C}}}
\newcommand{\BH}{{\mathbb{H}}}
\newcommand{\BN}{{\mathbb{N}}}
\newcommand{\BR}{{\mathbb{R}}}
\newcommand{\BZ}{{\mathbb{Z}}}
\newcommand{\CF}{{\mathcal{F}}}
\newcommand{\CT}{{\mathcal{T}}}
\newcommand{\ind}{{\mathbbm{1}}}
\newcommand{\prob}{{\bf P}}
\newcommand{\bae}{\begin{equation}\begin{aligned}}
\newcommand{\eae}{\end{aligned}\end{equation}}
\newcommand{\ev}{\mathbf{E}}
\renewcommand{\Re}{\mathrm{Re \ }}
\renewcommand{\Im}{\mathrm{Im \ }}
\newcommand{\ignore}[1]{{}}
\newcommand{\dd}{\partial}
\newcommand{\note}[1]{{\color{red}{ \bf{ [Note: #1]}}}}
\newcommand{\HL}{\text{HL}}
\newcommand{\ima}{\text{Im}}
\newcommand{\particle}{{\bf P}}
\newcommand{\diam}{\mathrm{diam}}
\begin{document}
\title{Growth of Stationary Hastings-Levitov}

\author{Noam Berger}
\address[Noam Berger]{Technische Universit\"at M\"unchen}
\urladdr{http://www.ma.huji.ac.il/~berger/}
\email{noam.berger@tum.de}

%\author{Jacob J. Kagan}
%\address[Jacob J. Kagan]{Weizmann Institute of Science}
%%\urladdr{http://personal-homepages.mis.mpg.de/sapozh/}
%\email{jacov.kagan@gmail.com }

\author{Eviatar B. Procaccia}
\address[Eviatar B. Procaccia\footnote{Research supported by NSF grant 1812009.}]{Technion - Israel Institute of Technology and Texas A\&M University}
\urladdr{www.math.tamu.edu/~procaccia}
\email{eviatarp@technion.ac.il}

\author{Amanda  Turner}
\address[Amanda  Turner]{Lancaster University, United Kingdom}
\urladdr{http://www.maths.lancs.ac.uk/~turnera/}
\email{ a.g.turner@lancaster.ac.uk}

%\thanks{Thanks the cat in the hat}

% \affiliation{University of California Los-Angeles}

\begin{abstract}
We construct and study a stationary version of the Hastings-Levitov$(0)$ model. We prove that, unlike in the classical HL$(0)$ model, in the stationary case the size of particles attaching to the aggregate is tight, and therefore SHL$(0)$ is proposed as a potential candidate for a stationary off-lattice variant of Diffusion Limited Aggregation (DLA). The stationary setting, together with a geometric interpretation of the harmonic measure, yields new geometric results such as stabilization, finiteness of arms and arm size distribution. We show that, under appropriate scaling, arms in SHL$(0)$ converge to the graph of Brownian motion which has fractal dimension $3/2$. {Moreover we show that trees with $n$ particles reach a height of order $n^{2/3}$, corresponding to a numerical prediction of Meakin from 1983 for the gyration radius of DLA growing on a long line segment.}
\end{abstract}

\maketitle
\setcounter{tocdepth}{1}

\tableofcontents

\numberwithin{equation}{section} %\numberwithin{figure}{section}

%********************************************************************************************************************************
% ********************************** Section I - Introduction ********************************************************************
% ********************************************************************************************************************************
\section{Introduction}
In 1998 Hastings and Levitov \cite{hastings1998laplacian} proposed a family of processes defined via a composition of
conformal slit functions with slit angles sampled from the harmonic measure. This family was introduced to provide off-lattice variants of DLA (Diffusion Limited Aggregation), the Eden model and other aggregation processes.

The family of processes $\HL(\alpha)$, $\alpha\in[0,2]$, is defined as follows. Abbreviate $D_0=\{x\in\BC:|x|\le 1\}$.  Consider the conformal map $\phi^\delta$ mapping $\BC\setminus D_0$ to $\BC\setminus (D_0\cup [1,1+\delta])$, normalized so that $\phi^\delta(z)=e^cz+a+\frac{b}{z}+\cdots$, for some $c>0$, as $|z|\rightarrow\infty$ (existence and uniqueness are due to the Riemann mapping theorem). Now for a random i.i.d. sequence $\theta_k$ uniformly distributed on $\partial D_0$ and a sequence $\delta_k >0$, define $\phi_k(z)=e^{i\theta_k}\phi^{\delta_k}(e^{-i\theta_k}z)$ (so $\phi_k$ corresponds to adding a slit of size $\delta_k$ at angle $\theta_k$), and
\[
\Phi_n=\phi_1\circ\phi_2\circ\cdots\circ\phi_n
.\]
The conformal maps $\Phi_n$ define a growing sequence of compact sets $D_n$ such that $\Phi_n:\BC\setminus D_0\mapsto \BC\setminus D_n$.
Thus at $|z|\rightarrow\infty$ we can write 
\begin{equation}\label{eq:laurant}
\Phi_n(z)=e^{c_n}z+O(1),
\end{equation}
where $c_n$ is the logarithmic capacity of $D_n$, and is monotonically increasing. The normalization is chosen in a way that preserves the harmonic measure from infinity, i.e. choosing a point on $\partial D_n$ with respect to the harmonic measure is equivalent to choosing $x=\Phi_n(u_n)$, where $u_n$ is sampled uniformly on $\partial D_0$. The size of the particles we add is determined by the conformal radius, thus is of order $\delta_n|\Phi'_n(u_n)|$. The one parameter family $\HL(\alpha)$, $\alpha\in[0,2]$, is defined by choosing the size of the $n$th particle to be 
\[
\delta_{n}=\delta|\Phi'_n(u_n)|^{-\frac{\alpha}{2}}
,\]
for some fixed $\delta>0$. For $\alpha=2$ the size of the particles stays approximately constant (though the shape does not) and thus this corresponds to DLA. 

The case $\alpha=0$ is the easiest to study, since it is a composition of i.i.d.~rotations of the same conformal slit function, but the process is non-physical in the following sense: the size of added particles grows exponentially (of order $e^{c_n}$), and in fact Rohde and Zinsmeister showed in 2005 \cite{rohde2005some} that as $n$ tends to infinity the scaled process has Hausdorff dimension $1$. The $\HL(0)$ process has one especially beautiful feature. Given any interval $J$ on $\partial D_0$, let $I_n$ be the elements of $D_n\setminus (D_0\setminus J)$ in the connected set of $J$ i.e. the subset of $D_n$ emanating from $J$. Denote by $H_n(J)$ the harmonic measure of $I_n$. This quantity has an elegant geometric representation since $H_n(J)=|\Phi_n^{-1}(I_n)|$ i.e. the length of the pre-image of the set $I_n$ (see Figure \ref{fig:hastings}). Moreover, for $\alpha=0$ the attached particle sizes are not a function of the location of attachment, and the harmonic measure of $I_n$ is a martingale (see 
\cite{norris2012hastings}).
\begin{figure}
\centering
\includegraphics[height=1.6in]{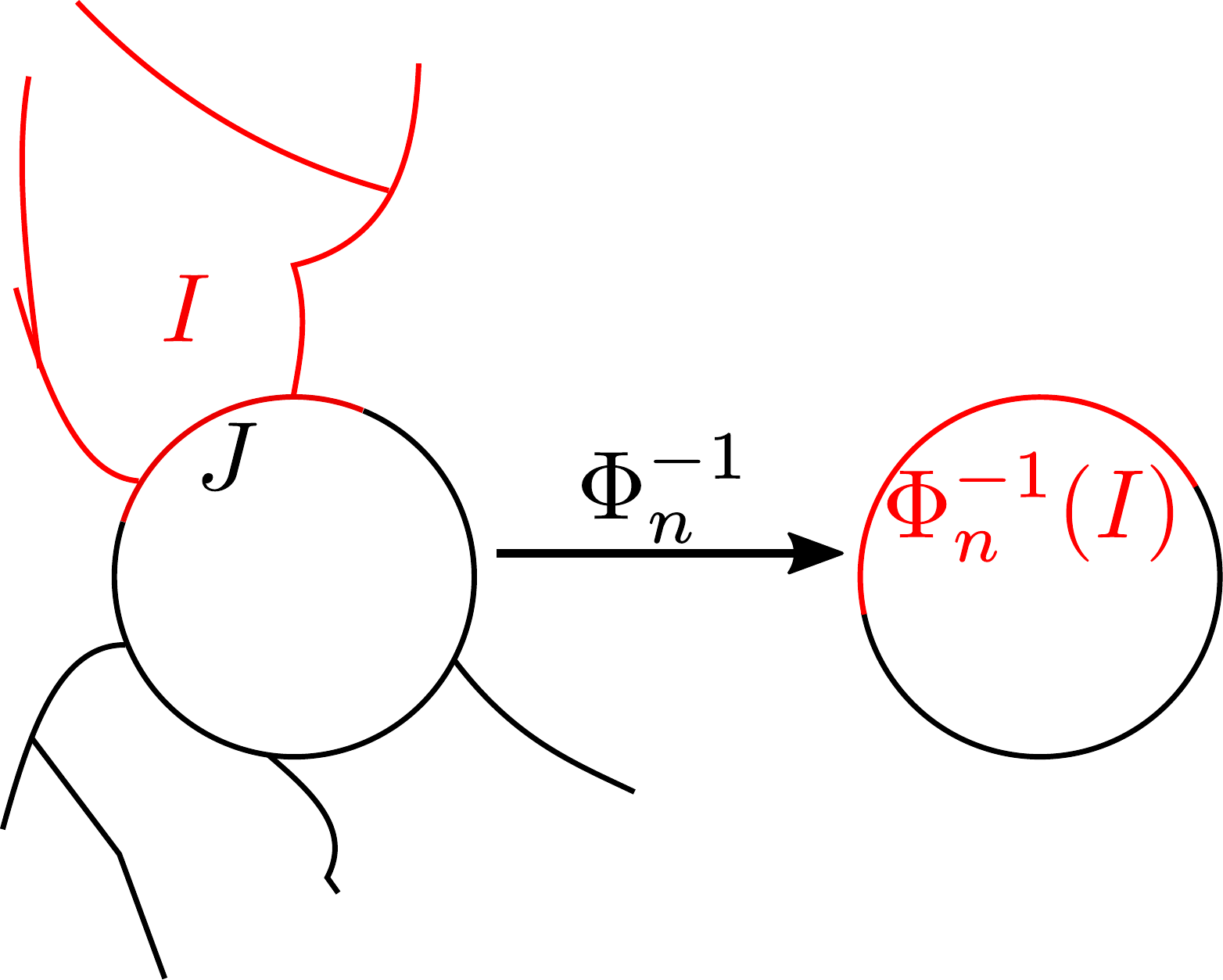}
\caption{$\HL$ geometric interpretation of the harmonic measure \label{fig:hastings}}

\end{figure}

In the 1980's, Physicists studied DLA growing from a long line segment \cite{meakin1983diffusion,racz1983diffusion,vicsek1984pattern}, a process they called Diffusion-controlled deposition on fibers. In \cite{meakin1983diffusion} Meakin predicted based on numerical simulations that the gyration radius i.e. the height of a tree with $n$ particles is $n^{2/3}$ and correspondingly the fractal dimension of DLA on a long fiber is $3/2$. In order to mathematically model random aggregation process growing from the real line, Itai Benjamini suggested considering infinite translation invariant growth processes in the upper half-plane. The IDLA and Eden models were considered by Berger, Kagan and Procaccia \cite{berger2014stretched}, and Antunovi\'c and Procaccia \cite{antunovic2014stationary} respectively. Lopez and Pimentel studied a stationary process created by geodesics in Last Passage Percolation \cite{lopez2015geodesic}, and obtained very precise geometric results. A stationary version of DLA on the lattice was introduced and studied by Procaccia et al~\cite{procaccia2018stationary, procaccia2019stationary, procaccia2017sets, procaccia2017stationary}. Recently \cite{mu2019scaling}, it was shown that DLA growing on a long line segment running up to time proportional to the length of the line, converges to stationary DLA. This justifies studying an infinite model in order to understand the phenomena of deposition on fibers considered by Meakin.
 %In all the previously studied stationary versions of aggregation processes one obtains a model with the same local behavior with the additional symmetry arising from stationarity. 
 
In this paper we prove existence and uniqueness of a Stationary Hastings Levitov process, SHL$(0)$. Moreover we prove tightness of particle sizes and that scaled appropriately the arms of the aggregate have Hausdorff dimension 3/2, corresponding to the numerical predictions of Meakin \cite{meakin1983diffusion}. 

Roughly speaking, SHL$(0)$ can be thought of as a translation-invariant growth process in the upper half-plane (abbreviated $\BH = \{z\in\BC:\ima(z)>0\}$) in which a growing cluster is represented as a composition of i.i.d.~conformal mappings. Since there is no uniform measure over $\BR$, we will use a Poisson point process to determine the location of new slits. 
Unlike the standard HL model, where particles are added one after the other, in the SHL$(0)$ the arrival times of the particles form a dense set of times which, as in many particle systems, makes well-definedness of the process non-trivial. Indeed,
a significant part of this paper is dedicated to the proof of existence (and uniqueness) of the SHL$(0)$ process. This stationary process is very special since the harmonic measure of an interval is constant in expectation and
thus the leading order of the conformal map is $z+O(1)$. This suggests that
even for $\alpha=0$, average particle sizes will be approximately constant thus yielding a stationary off-lattice variant
of DLA. One obtains all the advantages of  $\alpha=0$ (e.g. harmonic measure of an interval is a
martingale) without the undesirable exponential increase of particle size. See Figure \ref{fig:shl} for a visualization. 
\begin{figure}[h!]
\centering
\includegraphics[height=4.6cm]{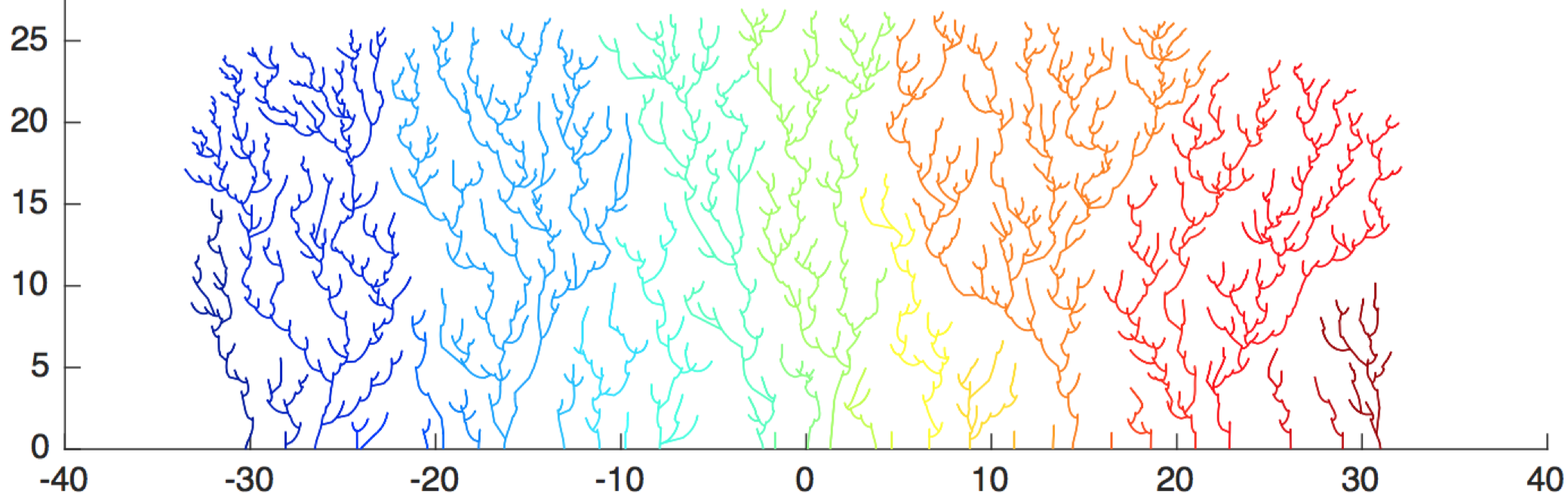}
\caption{SHL$(0)$ computer simulation. \label{fig:shl}}

\end{figure}

\subsection{Overview of results} 

The paper is structured as follows.

In Section \ref{sec:ax} we define the stationary Hastings-Levitov process SHL$(0)$ by providing the properties that the process needs to satisfy. We denote by $F_t(z)$ the conformal map from $\BH:=\{z\in\BC: \Im z>0\}$ to subsets of $\BH$ satisfying the definition of SHL$(0)$ given in Section \ref{sec:ax}. We prove the uniqueness of such a process in Section \ref{sec:unique}. In Section \ref{construction:amanda} we provide a construction of a process with these properties, thus proving existence, and use the construction to prove ergodicity of the process. 

In the remainder of the paper we establish several interesting geometric properties of SHL$(0)$. 
In Section \ref{sec:size} we prove tightness for the diameter of the particle sizes $\particle_t:=F_t(\BR)\setminus F_{t-}(\BR)$. Here, the diameter of a subset of $\BC$ is defined as the radius of the smallest Euclidean ball containing the set. This result motivates the use of SHL$(0)$ to model DLA on fibers. We also present aggregate growth bounds. In Section \ref{sec:finite_trees} we show that the process contains only finite branches with probability $1$. In Section \ref{sec:scaling} we discuss the scaling of the process and obtain the conjectured fractal dimension 3/2 of DLA on a fiber in the case of SHL$(0)$. The Hausdorff dimension on its own is not sufficient for obtaining temporal growth bounds as SHL$(0)$ is not self similar. The additional results needed to obtain the growth bounds corresponding to the numerical prediction of Meakin, namely that a tree of height $t$ contains an order of $t^{3/2}$ particles, are established in Section \ref{sec:growthrate}.

%\begin{theorem}\label{thm:prtszbnd} 
%For every $\epsilon>0$ there exists an $M<\infty$ such that for every $t$,
%\[
%P[\diam(\particle_t) > M] < \epsilon.
%\]
%\end{theorem}

%\begin{theorem}\label{thm:roughscaling}
%When appropriately scaled, the scaling limit of SHL($0$) has Hausdorff dimension $3/2$. 
%\end{theorem}

\subsection{Open problems and questions}

We finish this section with some natural directions for further research.

\begin{question}
One can try to define the SHL$(\alpha)$ process for $\alpha\neq 0$ by normalizing the slit sizes according to the conformal radius. For what values of $\alpha$ is the process well defined? We conjecture that the process is well defined for $\alpha\in(0,1)$ and not well defined for $\alpha> 2$. We have no firm conjecture for $\alpha\in[1,2]$.
\end{question}

\begin{question}
Universality of limit under different particle geometries: In \cite{norris2019scaling}, Norris, Silvestri and Turner show universality of the scaling limit for another planar aggregation model generated by composing conformal maps. We expect that the limit proved in Theorem \ref{thm:scalinglimit} for SHL$(0)$ holds for a wider class of particle geometries such as a disk. The most interesting case would be the classical off lattice DLA, where one attaches disks of equal radii according to the harmonic measure from infinity. 
\end{question}

\begin{question}
In this paper we show that the stationary Hastings Levitov process is local in the sense of strong spatial correlation decay. Show that the classical HL$(0)$ model in the small particle limit $\delta\rightarrow 0$ (see \cite{norris2012hastings}) grown up to time $t\delta^{-1}$ appropriately scaled around a point on $\BH$ and normalized such that particles are of size 1 locally converges to SHL$(0)$. In \cite{norris2012hastings}, local scaling limits are obtained by growing the HL$(0)$ process up to time $t\delta^{-2}$, so this question provides insight into the early-time behaviour of the HL$(0)$ process.  Note that this would allow one to borrow the Brownian Web scaling proved for HL$(0)$ in \cite{norris2012hastings} for the SHL$(0)$ model, rescaled as in Section \ref{sec:scaling}.
\end{question}

\begin{question}
Gaussian fluctuations of the interface: We expect from Lemma \ref{lem:prelimto_all_powerful} and the result of Silvestri  \cite{silvestri2017fluctuation} to get Gaussian fluctuations of
$$
\frac{F_t(z)-(z+i\pi t/2)}{\sqrt{t}}
,$$
as $t\rightarrow\infty$.
\end{question}

\newcommand{\R}{\mathbb R}
\newcommand{\HH}{\mathbb H}
\newcommand{\TT}{\mathcal T}
\newcommand{\SM}{\tilde s}
\newcommand{\Alpha}{\begin{picture}(8,10)\put(0,-2){\line(1,1){10}}\put(0,8){\line(1,-1){10}}\put(0,8){\line(0,-1){10}}\end{picture}}

\section{Definition of the process}\label{sec:ax}

Let $\mathcal P$ be the space of discrete measures on $[0,\infty)\times\R$, equipped with the Borel $\sigma$-algebra corresponding to the weak$^*$ topology w.r.t. continuous, compactly supported functions on $[0,\infty)\times\R$. Let $\mathcal Z$ be the space of conformal maps from $\HH$ onto subsets of $\HH$, equipped with the topology of uniform convergence on compact subsets. Let $\mathcal X$ be the space of cadlag functions from $[0,\infty)$ to $\mathcal Z$, with the Borel $\sigma$-algebra induced by the Skorohod topology (take some metric which materializes the topology in $\mathcal Z$). For $F$ in $\mathcal X$ we write $F_t$ for the function at time $t$, and $F_t(z)$ for the value that this function takes at $z$. Occasionally we will talk about $t\mapsto F_t(z)$, this is the trajectory of the point $z$ as time progresses.

We let $\Omega={\mathcal P}\times{\mathcal X}$, with the product $\sigma$-algebra $\mathcal F$.
We denote an element $\omega$ of $\Omega$ as a pair $(P,F)$ where $P\in {\mathcal P}$ and $F:[0,\infty)\to \mathcal Z$.
A measure $\mu$ on $(\Omega, \mathcal F)$ is called a SHL$(0)$ process if it satisfies a number of requirements. Before stating those requirements, we need to define the particle map by which particles are added to the cluster.

\begin{definition}\label{def:slit_map}
The {\em slit map} $\SM:\HH \to \HH\setminus [0,i]$ is defined as $\SM(z)=\sqrt{z^2 - 1}$. For $x \in \R$, the {\em slit map at $x$}, denoted $\SM_x$, is the conjugation of $\SM$ by the shift in $x$, namely 
$\SM_x(z)=x+\SM(z-x)$. 
\end{definition}

%\begin{definition}\label{def:past_danage}
%For $F\in \mathcal X$ and $z\in\HH$, we define the jump at $z$ at time $t$ to be
%\begin{equation}\label{eq:trans}
%\TT_t(z) = F_t(z) - \lim_{s \nearrow t}F_s(z).
%\end{equation}
%
%\end{definition}

\begin{definition}\label{def:SHL}
A measure $\mu$ on $(\Omega, \mathcal F)$ is called a SHL$(0)$ process if it satisfies the following requirements. 
\begin{enumerate}

\item\label{item:marg_poisson}
(Poisson arrivals) The $\mu$ marginal distribution of $P$ is that of an intensity $1$ Poisson process.

\item\label{item:start_id}
(Initial condition) $\mu$-almost surely $F_0$ is the identity.

\item\label{item:adapt} 
(Adapted) For every $0 \leq s<t$, $F_{s}^{-1} \circ F_t$ is $\CF_{s,t}$-measurable, where $\CF_{s,t} = \sigma(P|_{(s,t]\times\R})$. %\note{Is this OK now? This is OK, may be we need to explain why $F_{s}^{-1} \circ F_t$ is well-defined. May be not.}

\ignore{
\item\label{item:jump_at_poisson}
$\mu$-almost surely, for every $t\in[0,\infty)$, $F$ is continuous at $t$ if and only if $P(\{t\}\times\R)=0$.

\item\label{item:jump_is_slit}
$\mu$-almost surely, for every $(x,t) \in  [0,\infty) \times\R$, If $P(\{x,t\})=1$ (i.e if $(x,t)$ is a point of the Poisson process), then
\begin{equation}\label{eq:add_slit}
F_t = \lim_{s\nearrow t} F_s \circ \SM_x.
\end{equation}
}

\item\label{item:not_move_between_jumps}
(Growth condition)
 Let $A$ be the set of Poisson points, i.e. the atoms of $P$, and let $A_{t}=A\cap\{(s,x) : s\leq t\}$ and
 $A_{t,n}=A\cap\{(s,x):|x|\leq n\ ;\ s\leq t\}$. Then $\mu$-almost surely, for every $t\in[0,\infty)$ and $z\in\HH$,
\begin{equation}\label{eq:around_middle}
F_t(z)= z + \lim_{n\to\infty} \sum_{(s,x)\in A_{t,n}} \left[ F_{s-}(\tilde s_x(z)) - F_{s-}(z) \right].
\end{equation}

\label{item:last}
\end{enumerate}
\end{definition}

\ignore{
\subsubsection{The requirements}
A mesure $\mu$ on $(\Omega, \mathcal F)$ is called a half plane Hastings-Levitov process if it satisfies the following requirements. \note{should we name some?}
\begin{enumerate}

\item\label{item:space_shift} For $x\in\R$ and $P\in\mathcal P$ we let $\tau_x P$ be the measure satisfying $\tau_x P (A) = P(A+x)$. For $x\in\R$ and $F\in\mathcal X$ we define $\tau_x F$ as
$\tau_x F_t(z)=F_t(z+x) - x$. We require that the measure $\mu$ is $\tau_x$-invariant for every $x\in\R$.

\item\label{item:time_shift} For $t>0$ let $\mu_t$ be the distribution under $\mu$ of $\sigma_t P$ which is defined by $\sigma_t P(A)=P\big((A-it)\cap ([0,\infty)\times\R)\big)$ and of $\sigma_t F$
which is defined by $\big(\sigma_t F\big)_s = F_{t+s}\circ F_t^{-1}$. Then $\mu_t=\mu$ for every $t\geq 0$.

%\item\label{item:time_indep}
%For $t>0$, $\big(F_s(z)\big)_{z\in\HH ; s\leq t}$ is independent of $P|_{\R \times (t,\infty)}$.

\item\label{item:marg_poisson}
The $\mu$ marginal distribution of $P$ is that of an intensity $1$ Poisson process.

\item\label{item:no_prophet}
For every $T>0$, $(F_t)_{t < T}$ is independent of $P|_{ [T,\infty) \times\R}$. 

\item\label{item:start_id}
$\mu$-almost surely $F_0$ is the identity.

\item\label{item:jump_at_poisson}
$\mu$-almost surely, for every $t\in[0,\infty)$, $F$ is continuous at $t$ if and only if $P(\{t\}\times\R)=0$.

\item\label{item:jump_is_slit}
$\mu$-almost surely, for every $(x,t) \in  [0,\infty) \times\R$, If $P(\{x,t\})=1$ (i.e if $(x,t)$ is a point of the Poisson process), then
\begin{equation}\label{eq:add_slit}
F_t = \lim_{s\nearrow t} F_s \circ \SM_x.
\end{equation}

\item\label{item:not_move_between_jumps}
 Let $A$ be the set of Poisson points, i.e. the atoms of $P$, and let $A_{t}=A\cap\{(x,s) : s\leq t\}$ and
 $A_{t,y}=A\cap\{(x,s):|x|\leq y\ ;\ s\leq t\}$. Then $\mu$-almost surely, for every $t\in[0,\infty)$ and $z\in\HH$,
\begin{equation}\label{eq:around_middle}
F_t(z)= z + \lim_{y\to\infty} \sum_{(x,s)\in A_{t,y}} \TT_s(z).
\end{equation}

\label{item:last}
\end{enumerate}

Requirements \ref{item:space_shift} and \ref{item:time_shift} state some invariance properties of $\mu$. Requirement \ref{item:time_shift} would not hold for Hastings-Levitov process with parameter other than zero.
Requirement \ref{item:jump_is_slit} states that in arrival times, a conformal slit is added to the aggregate.  }
Requirement \eqref{item:not_move_between_jumps} roughly states that $F$ only changes due to jumps, where slit maps are being composed with the function. Lack of absolute summability of 
the differences forces us to choose a specific summation order, as in \eqref{eq:around_middle}. Different summation orders may lead to different processes 
%(in Section \ref{sec:pre} an explicit example is given).
(it is easy to produce such examples).
We believe that the summation order in \eqref{eq:around_middle} is the most natural one. In particular, it preserves mirror symmetry in distribution.

\subsection{Time reversal} \label{sec:reverse}
{We define here an auxiliary process, which is equal in distribution to SHL$(0)$ at any fixed time, but is more amenable to analysis.}
\begin{definition}\label{def:BSHL}
A measure $\tilde{\mu}$ on $(\Omega, \mathcal F)$, corresponding to pair $(\tilde{P}, \tilde{F})$, is called a backward SHL$(0)$ process if it satisfies the following requirements. 
\begin{enumerate}
\item\label{item:back_marg_poisson}
(Poisson arrivals) The $\tilde{\mu}$ marginal distribution of $\tilde{P}$ is that of an intensity $1$ Poisson process.

\item\label{item:back_start_id}
(Initial condition) $\tilde{\mu}$-almost surely $\tilde{F}_0$ is the identity.

\item\label{item:back_adapt} 
(Adapted) For every $0 \leq s<t$, $ \tilde{F}_t \circ \tilde{F}_{s}^{-1} $ is $\tilde \CF_{s,t}$-measurable, where $\tilde \CF_{s,t} = \sigma(\tilde P|_{(s,t]\times\R})$.

\item\label{item:back_not_move_between_jumps}
(Growth condition)
 Let $\tilde{A}$ be the set of Poisson points, i.e. the atoms of $\tilde{P}$, and let $\tilde{A}_{t}=\tilde{A}\cap\{(s,x) : s\leq t\}$ and
 $\tilde{A}_{t,n}=\tilde{A}\cap\{(s,x):|x|\leq n\ ;\ s\leq t\}$. Then $\tilde{\mu}$-almost surely, for every $t\in[0,\infty)$ and $z\in\HH$,
\begin{equation}\label{eq:back_around_middle}
\tilde{F}_t(z)= z + \lim_{n\to\infty} \sum_{(s,x)\in \tilde{A}_{t,n}} \left[ \tilde s_x(\tilde{F}_{s-}(z)) - \tilde{F}_{s-}(z) \right].
\end{equation}

\label{item:back_last}
\end{enumerate}
\end{definition}

The SHL$(0)$ and backward SHL$(0)$ processes are related as follows: Suppose that $(P,F)$ is a realization of a SHL$(0)$ process. Fix $T>0$ and define the discrete-measure $\tilde P$ on $[0,T]\times\R$ by $\tilde P(\{(t,x)\}) = P(\{(T-t,x)\})$ and the function $\tilde F: [0,T] \to \mathcal Z$ by % the cadlagization of
\[
\tilde F_t = \left[\lim_{s\searrow t}F_{T-s}\right]^{-1} \circ F_T.
\]
Then $(\tilde P, \tilde F)$ is a realization of a backward SHL$(0)$ process (restricted to $[0,T]$).

Note that the mapping that takes $(P,F)$ (with time restricted to $[0,T]$) to $(\tilde P, \tilde F)$ is a bijection since
\[
F_t = \tilde F_T \circ \left[\lim_{s\searrow t}\tilde F_{T-s}\right]^{-1}.
\]

It follows that in order to prove uniqueness and existence of the SHL$(0)$ process, it is sufficient to prove uniqueness and existence of the backward SHL$(0)$ process. 

$~$

\section{Uniqueness} \label{sec:unique} We now show that, assuming existence, there is a unique measure $\tilde \mu$ satisfying the requirements above. The existence of such a measure is proved in Section \ref{construction:amanda}.
\begin{theorem}\label{thm:uniqueness}
Let $\tilde \mu_1$ and $\tilde \mu_2$ be two probability measures satisfying Definition \ref{def:BSHL}.
Then $\tilde \mu_1 = \tilde \mu_2$.
\end{theorem}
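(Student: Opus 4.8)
The plan is to show that the law of $(\tilde P,\tilde F)$ restricted to any fixed time window $[0,T]$ is determined by the requirements of Definition \ref{def:BSHL}, since the Poisson marginal is already pinned down and $\tilde P$ on $[0,T]\times\R$ together with $\tilde F$ determines everything. First I would exploit the adaptedness requirement \eqref{item:back_adapt}: conditionally on the Poisson point configuration $\tilde P$, I claim the trajectory $t\mapsto \tilde F_t$ is (almost surely) a deterministic function of $\tilde P$, so that the coupling measure $\tilde\mu$ is necessarily the pushforward of the Poisson law under a fixed map. The mechanism is a fixed-point / successive-approximation argument on the growth equation \eqref{eq:back_around_middle}: writing $G_t := \tilde F_t$, equation \eqref{eq:back_around_middle} expresses $G_t(z)$ as $z$ plus a sum over atoms $(s,x)$ with $s\le t$ of the increments $\tilde s_x(G_{s-}(z)) - G_{s-}(z)$, so $G$ is a solution of an integral equation driven by $\tilde P$ in which the value at time $t$ depends only on the past $\{s<t\}$. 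The core of the proof is to show this integral equation has a \emph{unique} solution in the relevant space of cadlag conformal maps.

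The natural way to get uniqueness is a Grönwall-type contraction in the spatial truncation parameter $n$ combined with an iteration in time. Concretely, suppose $G^{(1)}, G^{(2)}$ are two solutions driven by the same $\tilde P$. On a bounded spatial window $|x|\le n$ the number of atoms up to time $T$ is a.s.\ finite, so on $A_{T,n}$ the dynamics is a genuine finite composition of slit maps and the two solutions must agree there by induction on the (finitely many) arrival times: at the first arrival both apply $\tilde s_x$ to the identity, between arrivals both are constant in the relevant sense dictated by \eqref{eq:back_around_middle}, and at each subsequent arrival both apply the next slit map to the common current value. The only subtlety is passing to the limit $n\to\infty$: one must show the contribution of atoms with $|x|>n$ to $G^{(j)}_t(z)$ tends to $0$ uniformly in $j$, for each fixed $z\in\HH$ and $t$. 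For this I would use that $\tilde s_x(w)-w = O(1/|x - \Re w|)$, more precisely that $|\tilde s_x(w)-w|$ decays like $|x|^{-1}$ as $|x|\to\infty$ for $w$ in a fixed compact subset of $\HH$ (Taylor-expanding $\sqrt{(w-x)^2-1} - (w-x)$), together with the Poisson control on the number of atoms in dyadic annuli $2^k\le |x|<2^{k+1}$ up to time $t$; the resulting series $\sum_k 2^k \cdot 2^{-k} \cdot (\text{small})$ must be shown summable, which is exactly the kind of estimate underlying the well-definedness of \eqref{eq:back_around_middle} in the first place. Hence the limit is independent of the solution chosen, and $G^{(1)} = G^{(2)}$ a.s.

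Once conditional-on-$\tilde P$ determinism is established, uniqueness of $\tilde\mu$ is immediate: both $\tilde\mu_1$ and $\tilde\mu_2$ have the intensity-$1$ Poisson law as their $\tilde P$-marginal by requirement \eqref{item:back_marg_poisson}, and given $\tilde P$ the conditional law of $\tilde F$ is the Dirac mass at the unique solution of \eqref{eq:back_around_middle} with $\tilde F_0 = \mathrm{id}$; a measure on a product space is determined by a marginal plus a regular conditional law, so $\tilde\mu_1 = \tilde\mu_2$. Finally, by the time-reversal bijection described before the statement, uniqueness of the backward process on every $[0,T]$ transfers to uniqueness of the SHL$(0)$ process itself (the finite-dimensional distributions, hence the law on $\mathcal X$, are pinned down).

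\medskip

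\emph{Main obstacle.} I expect the genuinely delicate step to be the uniform-in-$n$ tail control needed to interchange the limit $n\to\infty$ with the comparison of two candidate solutions — i.e.\ proving that the far-away atoms cannot conspire to produce two different limiting increments. This is where the specific $|x|^{-1}$-decay of the slit-map increment and the Poissonian bookkeeping over spatial scales have to be combined carefully; without it the induction over arrival times only gives agreement of the truncated processes $G_{t}$ on $A_{t,n}$, not of the full maps. A secondary (more routine) point is checking that solutions genuinely take values in $\mathcal Z$ (conformal onto a subdomain of $\HH$) and are cadlag, so that the comparison takes place in the right space; I would handle this alongside the existence construction of Section \ref{construction:amanda} rather than re-derive it here.
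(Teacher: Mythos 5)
Your overall architecture (couple the two realizations on the same Poisson process, show pathwise uniqueness of the solution of \eqref{eq:back_around_middle} given $\tilde P$, then conclude equality of laws and transfer back through the time-reversal bijection) matches the paper. But the core step — pathwise uniqueness — contains a genuine gap, and it is exactly at the point you yourself flag as the ``main obstacle.''

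First, the induction on arrival times does not apply to solutions of the full equation. Your argument treats the restriction to $|x|\le n$ as if the dynamics there were a finite composition of slit maps with the process ``constant between arrivals.'' That is true of the truncated approximants $G^{(n)}$ used in the existence construction, but the objects you must compare are two solutions of the \emph{untruncated} equation \eqref{eq:back_around_middle}: between any two arrivals in the window $|x|\le n$ there are infinitely many atoms with $|x|>n$, arriving at a dense set of times, and these move the solution. There is no first arrival and no interval of constancy, so the induction never gets started. Second, the tail estimate you propose to close the argument fails quantitatively: $|\tilde s_x(w)-w|\sim \tfrac{1}{2|x|}$, and the expected number of atoms with $|x|\in[2^k,2^{k+1})$ up to time $t$ is of order $t\,2^k$, so each dyadic annulus contributes order $t$ — the series $\sum_k 2^k\cdot 2^{-k}$ is not summable. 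The sum in \eqref{eq:back_around_middle} is only \emph{conditionally} convergent (the paper's Lemma \ref{lemma:slit_estimates}(ii) shows the drift integral converges to $i\pi/2$ only as a symmetric principal value), so no absolute bookkeeping over spatial scales can control the far atoms.

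The mechanism that actually works, and which the paper uses (Proposition \ref{prop:gunique}), is stochastic rather than combinatorial: decompose $G(t)-z$ into a martingale part plus a compensator $\int_0^t\int g(G(s)-x)\,dx\,ds$; the cancellation in the conditionally convergent sum is absorbed into the compensator via \eqref{eq:driftg}, while the fluctuations are controlled in $L^2$ by Doob's inequality and the isometry for Poisson integrals using \eqref{eq:l2g}. The difference $H=G^{(1)}-G^{(2)}$ is then bounded by writing $g(G^{(1)}-x)-g(G^{(2)}-x)=H\int_0^1 g'(\gamma(u)-x)\,du$ and using the $L^2$ bound \eqref{eq:gmonoton} on $g'$, which yields $\ev[\sup_{s\le t}|H(s)|^2]\le C\int_0^t \ev[\sup_{r\le s}|H(r)|^2]\,ds$ and hence $H\equiv 0$ by Gronwall \emph{in time}. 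You invoke ``Gr\"onwall in the truncation parameter $n$,'' which is not the right variable and does not produce a closed inequality. To repair your proof you would need to replace the arrival-time induction and the annulus summation by this martingale-plus-compensator decomposition and the derivative estimate \eqref{eq:sxmonoton}.
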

%\ref{item:space_shift}--\ref{item:not_move_between_jumps}.

\begin{proposition}\label{prop:gunique}
Let $g:\HH \to \HH$ be a holomorphic mapping satisfying the following property.
%\item $\Im g(z) \geq 0$.
%\item %For all $z \in \HH$

For every $\xi > 0$ there exists $C(\xi) < \infty$ such that for all $y \geq \xi$
\begin{equation}\label{eq:driftg}
\lim_{n \to \infty} \left | \int_{-n}^{n} g(x + iy) dx \right | \le C(\xi),
\end{equation}
\begin{equation}\label{eq:l2g}
\int_{-\infty}^{\infty} |g(x + iy)|^2 dx \le C(\xi),
\end{equation}
%\begin{equation}\label{eq:gmonoton}
%\int_{-\infty}^\infty |g'(x+iy)| dx \leq C(\xi),
%\end{equation}
 %For all $z \in \HH$
and
\begin{equation}\label{eq:gmonoton}
\int_{-\infty}^\infty |g'(x+iy)|^2 dx \leq C(\xi).
\end{equation}

Fix $z \in \HH$.
Let $G^{(1)}, G^{(2)} :  [0, \infty) \to \HH$ be two cadlag solutions to
\begin{equation}\label{eq:axg}
G(t) = z + \lim_{n \to \infty} \sum_{(s,x)\in A_{t,n}} g(G(s-) - x)
\end{equation}
which are adapted to the filtration $\CF_{t}=\CF_{0,t}$,
where $A_{t,n}$ and $\CF_{s,t}$ are as in Definition \ref{def:SHL}.

Then $P$ -  a.s. $G^{(1)} = G^{(2)}$.
\end{proposition}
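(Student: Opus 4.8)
\subsection*{Proof proposal}

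Fix a realization $(P,G^{(1)},G^{(2)})$ and set $D:=G^{(1)}-G^{(2)}$, a cadlag adapted process with $D(0)=0$; the claim is $D\equiv 0$ almost surely. Two preliminary facts drive the argument. First, since $\ima g>0$, at each atom $(s,x)$ of $P$ the process $G^{(j)}$ jumps by $g(G^{(j)}(s-)-x)$, which has positive imaginary part; hence $t\mapsto\ima G^{(j)}(t)$ is nondecreasing and $\ima G^{(j)}(s-)\ge\ima z$ for all $s$, so the bounds \eqref{eq:driftg}, \eqref{eq:l2g}, \eqref{eq:gmonoton} may be used with the single value $\xi=\ima z$ at every time. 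Second, \eqref{eq:l2g} and \eqref{eq:gmonoton} force $g(\cdot+iy)\in H^1(\R)$ for each $y\ge\xi$, with $H^1$-norm bounded uniformly in $y\ge\xi$; hence $g(\cdot+iy)$ is continuous, tends to $0$ at $\pm\infty$, and has sup-norm bounded uniformly in $y\ge\xi$. Since $\frac{d}{dy}\int_{-n}^n g(u+iy)\,du=i\big(g(n+iy)-g(-n+iy)\big)\to 0$, dominated convergence shows $\Psi(y):=\lim_{n\to\infty}\int_{-n}^n g(u+iy)\,du$ (which exists and is finite by \eqref{eq:driftg}) is independent of $y$; write $\Psi\equiv\psi_0$.

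The next step is to rewrite \eqref{eq:axg} in stochastic-integral form. For fixed $n$ the atoms of $P$ with $|x|\le n$ form a finite set in $[0,t]\times\R$, so
\[
\sum_{(s,x)\in A_{t,n}} g\big(G^{(j)}(s-)-x\big)\;=\;M^{(j)}_n(t)\;+\;\int_0^t\!\!\int_{-n}^{n} g\big(G^{(j)}(s-)-x\big)\,dx\,ds,
\]
where $M^{(j)}_n$ is the compensated (hence martingale) sum. By \eqref{eq:l2g} the integrand satisfies $\int_\R|g(G^{(j)}(s-)-x)|^2\,dx\le C(\xi)$, so $M^{(j)}_n(t)$ is $L^2$-bounded uniformly in $n$ and converges in $L^2$ to $M^{(j)}(t):=\int_0^t\!\int_\R g(G^{(j)}(s-)-x)\,(\text{compensated Poisson measure})$. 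The compensator converges almost surely: for a fixed realization the cadlag trajectories $G^{(j)}$ are bounded on $[0,t]$, so $\rea G^{(j)}(s-)$ and $\ima G^{(j)}(s-)$ stay in a bounded set; combined with the uniform bound on $\int_{-m}^{m}g(\cdot+iy)\,du$ for $m$ and $y$ in compact sets (which follows from \eqref{eq:driftg} together with the uniform Lipschitz-in-$y$ bound above) and dominated convergence in $s$, the inner integral tends pointwise to $\Psi(\ima G^{(j)}(s-))=\psi_0$, so $\int_0^t\!\int_{-n}^{n}g(G^{(j)}(s-)-x)\,dx\,ds\to t\psi_0$. Comparing with \eqref{eq:axg} gives $G^{(j)}(t)=z+M^{(j)}(t)+t\psi_0$ for $j=1,2$.

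Now uniqueness follows by Gronwall. Subtracting, the deterministic drifts cancel and $D(t)=M^{(1)}(t)-M^{(2)}(t)=\int_0^t\!\int_\R\big[g(G^{(1)}(s-)-x)-g(G^{(2)}(s-)-x)\big]\,(\text{compensated Poisson measure})$ is a square-integrable martingale, so by the It\^o isometry
\[
\e\,|D(t)|^2\;=\;\e\int_0^t\!\!\int_\R\big|g(G^{(1)}(s-)-x)-g(G^{(2)}(s-)-x)\big|^2\,dx\,ds.
\]
The key estimate is: for $w_1,w_2\in\HH$ with $\ima w_1,\ima w_2\ge\xi$ one has $\int_\R|g(w_1-x)-g(w_2-x)|^2\,dx\le C(\xi)\,|w_1-w_2|^2$. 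Indeed, since $\{\ima\ge\xi\}$ is convex, $g(w_1-x)-g(w_2-x)=(w_1-w_2)\int_0^1 g'\big(w_2-x+\theta(w_1-w_2)\big)\,d\theta$, and Cauchy--Schwarz in $\theta$ followed by \eqref{eq:gmonoton} (on the horizontal line through $w_2-x+\theta(w_1-w_2)$, whose height is $\ge\xi$) gives the bound. Applying it with $w_j=G^{(j)}(s-)$, $\xi=\ima z$, yields $\e|D(t)|^2\le C(\ima z)\int_0^t\e|D(s)|^2\,ds$. Since $\e|M^{(j)}(t)|^2\le C(\ima z)\,t$ and $|\psi_0|\le C(\ima z)$, we have $\e|D(t)|^2<\infty$, and, being the square of a martingale, $t\mapsto\e|D(t)|^2$ is nondecreasing, hence bounded on compacts; Gronwall forces $\e|D(t)|^2=0$ for all $t$. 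Thus $D(t)=0$ almost surely for each $t$, and by right-continuity $G^{(1)}=G^{(2)}$ almost surely.

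I expect the main obstacle to be the reformulation of \eqref{eq:axg} as $G^{(j)}=z+M^{(j)}+t\psi_0$: because the $x$-sum is only conditionally convergent, one must separate the martingale part and the compensator and take $n\to\infty$ in each, which needs (a) the $L^2$-control of \eqref{eq:l2g} for the martingale and (b) a dominated-convergence argument for the compensator, resting on the fact that the truncated integrals $\int_{-m}^{m}g(\cdot+iy)\,du$ are uniformly bounded for $y$ in compact subsets of $(0,\infty)$ and converge to the constant $\psi_0$ --- a consequence of \eqref{eq:driftg} and the $H^1$-regularity extracted from \eqref{eq:l2g}--\eqref{eq:gmonoton}. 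Once the SDE form is in hand, the rest is the textbook $L^2$-Gronwall uniqueness proof, with \eqref{eq:gmonoton} supplying exactly the Lipschitz-in-$L^2$ bound for the driving field and $\ima g>0$ supplying the uniform lower bound $\ima G^{(j)}\ge\ima z$ that makes a single constant $C(\ima z)$ work.
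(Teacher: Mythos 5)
Your proposal follows the same skeleton as the paper's proof: the observation that $\ima g>0$ forces $\ima G^{(j)}(t)\ge\ima z$ so a single constant $C(\ima z)$ suffices; the decomposition of the Poisson sum into a martingale plus a compensator; the identity $g(w_1-x)-g(w_2-x)=(w_1-w_2)\int_0^1 g'(\gamma(u)-x)\,du$ combined with Jensen, Fubini and \eqref{eq:gmonoton} to get an $L^2$-Lipschitz bound; and Gronwall. The one genuine divergence is the treatment of the drift. The paper keeps the two compensators $D^{(1)},D^{(2)}$ as (random) processes and bounds $\sup_{s\le t}|D^{(1)}_s-D^{(2)}_s|^2$ by the same interpolation argument, feeding a second term into the Gronwall inequality; you instead argue that each compensator equals the deterministic function $t\psi_0$, so the drifts cancel identically and only the martingale difference survives, after which the It\^o isometry alone closes the loop. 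Your route is cleaner and, for the intended application $g=\SM-\mathrm{id}$, the constancy of the drift is exactly Lemma \ref{lemma:slit_estimates}(ii) (the limit is $i\pi/2$ for every $z$), so the argument is sound there.

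The caveat is that, read as a statement about an abstract $g$ satisfying only \eqref{eq:driftg}--\eqref{eq:gmonoton}, your drift-cancellation step uses facts those hypotheses do not supply: \eqref{eq:driftg} bounds only the modulus of the limit and does not assert that $\lim_n\int_{-n}^n g(x+iy)\,dx$ exists as a complex number, nor that it is independent of $y$, nor that recentering the window at $G^{(j)}(s-)$ (whose real part varies) leaves the limit unchanged. Your derivation of these facts leans on a uniform-in-$y$ decay of $g(\cdot+iy)$ at real infinity, which does not follow from the uniform $H^1$ bound alone (the tails $\int_{|x|>n}|g(x+iy)|^2dx$ vanish for each fixed $y$ but not a priori uniformly), so the interchange of $\partial_y$ with $\lim_n$ in your proof that $\Psi$ is constant needs more justification. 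None of this is fatal: either verify the drift identity directly for the slit map (as the paper effectively does), or fall back to bounding $D^{(1)}-D^{(2)}$ via the interpolation estimate as in the paper, which requires only \eqref{eq:driftg}. With that repair the proof is complete.
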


\begin{proof}

%Note that from \eqref{eq:gmonoton} we immediately get
%\begin{equation}\label{eq:l2derg}
%\int_{-\infty}^{\infty} |g'(z+x)|^2 dx < \infty
%\end{equation}

Fix $T>0$ and suppose that $t \in [0,T]$. Using \eqref{eq:axg} and the fact that $g$ maps into $\HH$, it follows that
\begin{equation}\label{eq:imincrease}
\Im G^{(i)}(t) \geq \Im z
\end{equation}
for all $t \geq 0$, for $i=1,2$.

We begin by showing that $G^{(i)}(t)$ has bounded second moment. To this end, note that%Now
\begin{eqnarray*}
G^{(i)}(t)-z &=& \lim_{n \to \infty} \sum_{(s,x)\in A_{t,n}}  g\big(G^{(i)}(s-) - x\big) \\
& = & M_t^{(i)} + \lim_{n \to \infty} \int_0^t \int_{-n}^n g\big(G^{(i)}(s) -x \big) dxds,
\end{eqnarray*}
%\note{nd made the argument of $g$ the same in both lines, up to $s$ rather than $s-$.}
where $\big(M_t^{(i)}\big)$ is a martingale by the adaptedness of $G^{(i)}$. Furthermore, using Doob's inequality,
\eqref{eq:l2g} and \eqref{eq:imincrease}
\begin{eqnarray*}
\ev\left[\sup_{s \leq t} |M_s^{(i)}|^2\right] & \leq & 4 \ev\left[|M_t^{(i)}|^2\right] \\
& = & 4 \ev \left[ 
\int_0^t \int_{-\infty}^\infty \left | g \big( G^{(i)}(s) - x \big)\right |^2 dxds 
\right] \\
& \leq & 4 C(\Im z) T,
\end{eqnarray*}
%\note{changed to $ix$}
where expectation is with respect to the Poisson random measure $P$.
Set 
\[
D_t^{(i)}=\lim_{n \to \infty} \int_0^t \int_{-n}^n g \big(G^{(i)}(s) - x \big)  dxds.
\]
Then, by \eqref{eq:driftg},
\begin{eqnarray*}
\ev\left[\sup_{s \leq t} |D_s^{(i)}|^2\right] & \leq & \ev \left[ 
\left ( \int_0^t \lim_{n \to \infty} \left | \int_{-n}^n  g(G^{(i)}(s) - x) dx \right | ds 
\right)^2 \right ] \\
& \leq & \left ( C(\Im z) t \right )^2
\end{eqnarray*}
and hence
\[
\ev\left[\sup_{s \leq t} |G_s^{(i)}|^2\right] \leq 8 C(\Im z)T + 2\left ( C(\Im z) T \right )^2 < \infty.
\]

Now write $H(t) :=  G^{(1)} (t) - G^{(2)} (t)$. Then 
\[
H(t) = M_t + D_t
\]
where $M_t=M_t^{(1)}-M_t^{(2)}$ and $D_t=D_t^{(1)}-D_t^{(2)}$. 

We now estimate the second moment of $S(t) := \sup_{s\leq t} |H(s)|$. 
In what follows we show that the expectation of $S(t)^2$ w.r.t. to the Poisson measure of $P$ is zero from which it will follow that $S(t)=0$ a.s. for all $t \leq T$. Since $T>0$ is arbitrary, the result will follow.

As before
\[
S(t)^2 \leq 2 \left[
\sup_{s \leq t} |M_s|^2 + \sup_{s \leq t} |D_s|^2
\right].
\]

%We first estimate the second moment of $\sup_{s \leq t} |M_s|$, and then that of $\sup_{s \leq t} |D_s|$.
Using Doob's inequality,
\begin{eqnarray}
\nonumber
\ev\left[\sup_{s \leq t} |M_s|^2\right] & \leq & 4 \ev\left[|M_t|^2\right] \\
& = & 4 \ev \left[ 
\int_0^t \int_{-\infty}^\infty \left| g(G^{(1)}(s) - x) -  g(G^{(2)}(s) - x) \right|^2 dxds
\right].
\label{eq:intmart}
\end{eqnarray}

We now estimate the integrand in \eqref{eq:intmart}.

For given $s$, let $\gamma_{s} : [0,1] \to \BC$ be the curve which linearly interpolates from $G^{(1)}(s)-x$ to $G^{(2)}(s)-x$.
Then

\begin{eqnarray*}
 g(G^{(1)}(s) - x) -  g(G^{(2)}(s) - x) & = & \left[ G^{(1)}(s) -  G^{(2)}(s) \right] \int_0^1 g'\big(\gamma_s(u) - x\big) du \\
 & = & H(s) \int_0^1 g'\big(\gamma_s(u) - x\big) du.
\end{eqnarray*}

By \eqref{eq:imincrease}
%As $\gamma_{s}$ linearly interpolate them,
for all $u \in [0,1]$, we have that $\Im \gamma_{s}(u) \geq \Im z$.
Therefore, by Jensen, Fubini, and \eqref{eq:gmonoton}, we get that

\begin{eqnarray}
\nonumber
\ev\left[\sup_{s \leq t} |M_s|^2\right] & \leq &  4 \ev \left[ 
\int_0^t \int_{-\infty}^\infty \int_0 ^1 \left| H(s) g'\big(\gamma_s(u) - x\big) \right|^2 dudxds 
\right]\\
\nonumber
& = &
4\ev\left[ \int_0^t \big| H(s) \big|^2 \int_0 ^1 \left[\int_{-\infty}^\infty\left|  g'\big(\gamma_s(u) + x\big) \right|^2 dx \right] duds \right] \\
\nonumber
%& \leq &
%4\int_0^t  \ev\left[ \big| H(s-) \big|^2 \right] \left[\int_{-\infty}^\infty\left|  g'\big(i\cdot \Im z + x\big) \right|^2 dx \right] ds \\
& \leq &
4C(\Im z) \int_0^t \ev\left[ \big| S(s) \big|^2 \right] ds.
\label{eq:boundsupmt}
\end{eqnarray}

By an almost identical argument we get
\begin{eqnarray}\label{eq:forsupd}
\ev\left[\sup_{s \leq t} |D_s|^2\right]
\leq
C(\Im z)^2 T \int_0^t \ev\left[ \big| S(s) \big|^2 \right] ds.
\end{eqnarray}

Combining \eqref{eq:boundsupmt} and \eqref{eq:forsupd}, we get the bound
\begin{eqnarray}\label{eq:fors}
\ev\left[|S(t)|^2\right]
\leq
\left ( 8C(\Im z)+2C(\Im z)^2 T \right ) \int_0^t \ev\left[ \big| S(s) \big|^2 \right] ds.
\end{eqnarray}
We are now done, noting that the only non-negative solution for \eqref{eq:fors} is the zero function.

\end{proof}

Theorem \ref{thm:uniqueness} follows from Proposition \ref{prop:gunique}, once we have established the appropriate basic estimates on the slit map defined in Definition \ref{def:slit_map}. These are given in the lemma below, together with some additional estimates which will be of use later. The computation of the integral in \eqref{eq:sxdrift} shows that this integral is convergent, but not absolutely convergent. This lack of absolute convergence introduces some technicalities into the proof of existence and was the reason that condition  \eqref{eq:back_around_middle} needed to specify the order of summation.

\begin{lemma}\label{lemma:slit_estimates}
There exists some absolute constant $C > 0$ such that the following hold for all $z \in \HH$.
\begin{itemize}
\item[(i)] 
\begin{equation}\label{eq:l2sx}
\int_{-\infty}^{\infty} |\SM_x(z)-z|^2 dx < \frac{C}{1+\Im z},
\end{equation}
\begin{equation}\label{eq:l1sxprime}
\int_{-\infty}^{\infty} |\SM_x'(z)-1| dx < \frac{C}{\Im z},
\end{equation}
and
\begin{equation}\label{eq:sxmonoton}
\int_{-\infty}^\infty |\SM_x'(z)-1|^2 dx < \frac{C\left(1+\log((\Im z)^{-1})\mathbbm{1}_{\{\Im z < 1\}}\right)}{(1 + (\Im z)^3)}.
\end{equation}
\item[(ii)] 
\begin{equation}\label{eq:sxdrift}
\lim_{n \to \infty} \int_{-n}^n (\SM_x(z)-z) dx = i\pi/2.
\end{equation}
\item[(iii)] 
\begin{equation}\label{eq:n0sx}
\left | \int_{-n}^{n} \left [ \SM_x(z)-z \right ] dx \right | < C \log n.
\end{equation}
If $n \geq 4 \vee 2 |\Re z|$, then 
\begin{equation}\label{eq:n1sx}
\left | \int_{-n}^{n} \left [ \SM_x(z)-z \right ] dx \right | < C 
\end{equation}
and
\begin{equation}\label{eq:n2sx}
\int_{|x|>n} |\SM_x(z)-z|^2 dx < \frac{C}{n}.
\end{equation}
\end{itemize}
\end{lemma}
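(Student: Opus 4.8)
The plan is to substitute $w=z-x$, which turns every integral in the statement into an integral, over the horizontal line $\{t+i\,\Im z:t\in\R\}$, of one of the two functions
\[
f(w):=\SM(w)-w=\sqrt{w^2-1}-w,\qquad f'(w)=\frac{w}{\sqrt{w^2-1}}-1,
\]
since $\SM_x(z)-z=f(z-x)$ and $\SM_x'(z)-1=f'(z-x)$; here $\sqrt{\ \cdot\ }$ is the branch of $\sqrt{w^2-1}$ with cut $[-1,1]$ and $\sqrt{w^2-1}\sim w$ at infinity. Two elementary facts about $f$ drive everything. First, for $|w|>1$ the binomial series gives $f(w)=-\tfrac1{2w}-\tfrac1{8w^3}-\cdots$ and $f'(w)=\tfrac1{2w^2}+\tfrac3{8w^4}+\cdots$, so $|f(w)|\le C/|w|$ and $|f'(w)|\le C/|w|^2$ once $|w|\ge2$. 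Second, near the branch points $w=\pm1$ one has $f(w)=O(|w\mp1|^{1/2})$ and $f'(w)=O(|w\mp1|^{-1/2})$; in particular $f$ extends continuously to $\overline{\HH}\cup\{\infty\}$ and hence is bounded on $\HH$ by an absolute constant $C_0$, while $f'$ has only an integrable singularity at $\pm1$.

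For part (i) I would split each integral into its contribution over $\{x:|z-x|\ge2\}$ and over $\{x:|z-x|<2\}$ (the latter region is an interval of length $\le4$, and is empty when $\Im z\ge2$). On $\{|z-x|\ge2\}$ the decay bounds apply: $\int_{|z-x|\ge2}|f(z-x)|^2\,dx\le C\int\frac{dx}{(\Re z-x)^2+(\Im z)^2}$, which equals $C\pi/\Im z$ when $\Im z\ge2$ and is bounded by an absolute constant when $\Im z<2$ (since then $|\Re z-x|$ is bounded below on this set); likewise the $f'$ integrals reduce to $\int\frac{dx}{|z-x|^2}\le C/\Im z$ and $\int\frac{dx}{|z-x|^4}\le C/(\Im z)^3$. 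On $\{|z-x|<2\}$ we use $|f|\le C_0$ for the $f$-integrals, and for the $f'$-integrals we use $|f'(z-x)|\le C|z-x-1|^{-1/2}+C|z-x+1|^{-1/2}+C$; integrating $|z-x\mp1|^{-1/2}$ gives $\int_{|t|\le3}(t^2+(\Im z)^2)^{-1/4}\,dt\le C$ uniformly, while integrating $|z-x\mp1|^{-1}$ gives $\int_{|t|\le3}(t^2+(\Im z)^2)^{-1/2}\,dt=2\sinh^{-1}(3/\Im z)$, which is bounded for $\Im z\ge1$ and of order $\log((\Im z)^{-1})$ for $\Im z<1$. This last term is exactly the source of the logarithm in \eqref{eq:sxmonoton}. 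Collecting the pieces and matching them against $\tfrac{C}{1+\Im z}$, $\tfrac{C}{\Im z}$, and $\tfrac{C(1+\log((\Im z)^{-1})\mathbbm{1}_{\{\Im z<1\}})}{1+(\Im z)^3}$ (choosing the absolute constant large enough) gives (i).

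For part (ii) I would use the explicit antiderivative
\[
F(u)=\tfrac12 u\sqrt{u^2-1}-\tfrac12 u^2-\tfrac12\cosh^{-1}(u),\qquad F'(u)=\sqrt{u^2-1}-u=f(u),
\]
which is analytic on $\HH$ (with $\cosh^{-1}(u)=\log(u+\sqrt{u^2-1})$, whose natural cut $(-\infty,1]$ is disjoint from $\HH$), so that with $u=z-x$ one gets $\int_{-n}^{n}(\SM_x(z)-z)\,dx=F(z+n)-F(z-n)$. As $n\to\infty$ the algebraic part $\tfrac12 u\sqrt{u^2-1}-\tfrac12 u^2=\tfrac{u^2}{2}\big(\sqrt{1-u^{-2}}-1\big)$ tends to $-\tfrac14$ for $u=z\pm n$, so these two contributions cancel, leaving $-\tfrac12\big(\cosh^{-1}(z+n)-\cosh^{-1}(z-n)\big)$. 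Since $u+\sqrt{u^2-1}=2u+o(1)$, we have $\cosh^{-1}(z+n)-\cosh^{-1}(z-n)=\big[\log|z+n|-\log|z-n|\big]+i\big[\arg(z+n)-\arg(z-n)\big]+o(1)$; the segment $\{z-x:x\in[-n,n]\}$ runs from near $+n$, where $\arg\to0$, to near $-n$ in the upper half-plane, where $\arg\to\pi$, so the modulus parts cancel and the argument parts tend to $0-\pi=-\pi$, whence the limit is $-\tfrac12\cdot i(-\pi)=i\pi/2$, which is \eqref{eq:sxdrift}.

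Part (iii) then follows with little extra work. For \eqref{eq:n0sx} one uses $|F(u)|\le C+\tfrac12\log(2|u|+1)$ together with the crude bound $\big|\int_{-n}^{n}f(z-x)\,dx\big|\le 2nC_0$, which handles the regime where $|z|$ is very large (there $f(z-x)$ is itself small by the decay estimate). For \eqref{eq:n1sx} one writes $\int_{-n}^{n}f(z-x)\,dx=i\pi/2-\lim_{m\to\infty}\int_{n<|x|\le m}f(z-x)\,dx$ and notes that the hypothesis $n\ge 4\vee2|\Re z|$ forces $|z-x|\ge|x|/2\ge2$ on $\{|x|>n\}$, so $f(z-x)=-\tfrac1{2(z-x)}+O(|x|^{-3})$ there; the error integrates to $O(1/n^2)$, and the principal value of $-\tfrac1{2(z-x)}$ over $\{|x|>n\}$ is $-\tfrac12\log\tfrac{n-z}{n+z}$, whose modulus is bounded because under that hypothesis $\tfrac{n-z}{n+z}$ lies in a fixed annulus with bounded argument. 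Finally \eqref{eq:n2sx} is immediate from the same lower bound $|z-x|\ge|x|/2$: $\int_{|x|>n}|f(z-x)|^2\,dx\le C\int_{|x|>n}|z-x|^{-2}\,dx\le C\int_{|x|>n}(|x|/2)^{-2}\,dx=C'/n$. The one genuinely delicate point is \eqref{eq:sxdrift}: because the integral is only conditionally convergent, one must track the branch of $\cosh^{-1}$ (equivalently of $\arg$) along the contour $\{z-x:x\in[-n,n]\}$ as $n\to\infty$, and it is precisely the jump of $\arg$ from $0$ to $\pi$ between the two ends of the contour that produces the nonzero drift $i\pi/2$; the rest is routine estimation, with the small-$\Im z$ case of \eqref{eq:sxmonoton} being the only other place requiring care.
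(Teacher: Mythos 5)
Your argument is correct, and parts (i) and (iii) follow essentially the same route as the paper: split each integral according to whether $|z-x|$ is large (where the expansions $\SM_x(z)-z=-\tfrac{1}{2(z-x)}+O(|z-x|^{-3})$ and $\SM_x'(z)-1=O(|z-x|^{-2})$ apply) or small (where one uses boundedness of $\SM_x(z)-z$ and the integrable $|z-x\mp 1|^{-1/2}$ singularity of the derivative, the latter being exactly the source of the logarithm in \eqref{eq:sxmonoton}). The genuine difference is in part (ii). The paper evaluates $\int_{-n}^n(\SM_x(z)-z)\,dx$ by deforming to a rectangular contour of height $R$ with $n^{1/2}\ll R\ll n$: the vertical sides contribute $O(1/(n-|\Re z|))$, and on the top edge the integrand is uniformly $\tfrac{1}{2(z+iR-x)}+O(R^{-3})$, so the drift $i\pi/2$ emerges from an explicit $\tan^{-1}$ computation. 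You instead exhibit the closed-form primitive $F(u)=\tfrac12 u\sqrt{u^2-1}-\tfrac12 u^2-\tfrac12\cosh^{-1}(u)$ of $\SM(u)-u$ on $\HH$ and read the limit off the endpoint asymptotics, the $i\pi/2$ coming from the jump of $\arg\bigl(u+\sqrt{u^2-1}\bigr)$ from $0$ at $u=z+n$ to $\pi$ at $u=z-n$; this is cleaner, and the branch bookkeeping you flag as the delicate point is handled correctly, since $u+\sqrt{u^2-1}$ has positive imaginary part throughout $\HH$ and so the principal logarithm makes $F$ single-valued there. Your derivation of \eqref{eq:n1sx} by subtracting the tail $\int_{|x|>n}$ from the full limit, rather than re-running the finite-$n$ contour estimate as the paper does, is also sound: the hypothesis $n\ge 4\vee 2|\Re z|$ gives $|z-x|\ge |x|/2$ on the tail and keeps $(n+z)/(n-z)$ in a fixed annulus with bounded argument. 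The two approaches cost about the same; yours makes the merely conditional convergence in \eqref{eq:sxdrift} more transparent, while the paper's contour argument would generalize more readily to particle maps admitting no elementary antiderivative.
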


Proof of Lemma \ref{lemma:slit_estimates} appears in Appendix \ref{app:3.3}.

\begin{proof}[Proof of Theorem \ref{thm:uniqueness}]
Let $(\tilde P^{(1)},\tilde F^{(1)})\sim \tilde \mu_1$ and  $(\tilde P^{(2)},\tilde F^{(2)})\sim \tilde \mu_2$ be two realizations on the same probability space, coupled s.t. $\tilde P^{(1)} = \tilde P^{(2)} = \tilde P$. Our purpose is to show that a.s. $\tilde F^{(1)} = \tilde F^{(2)}$.

To see this, note that for every $z\in \HH$, $G^{(i)}(t) = \tilde F^{(i)}_t(z)$ satisfies the assumption of Proposition \ref{prop:gunique} with respect to the Poisson process
$\tilde P$ and taking $g(w) = \SM(w) - w$. Therefore, a.s $\tilde F^{(1)}_t(z) = \tilde F^{(2)}_t(z)$. Thus, if $K \subseteq \HH$ is a countable dense subset, then a.s.
\begin{equation*}
\forall_{t \in [0,T]} \forall_{z \in K} \tilde F^{(1)}_t(z) = \tilde F^{(2)}_t(z).
\end{equation*}
The fact that, for all $t \geq 0$, the functions  $\tilde F^{(1)}_t$ and $\tilde F^{(2)}_t$ are continuous in $z$ implies that a.s. $\tilde F^{(1)} = \tilde F^{(2)}$ as required.
\end{proof}

%Using Lemma \ref{lem:apriori_unqns} we can now start controlling the difference between $F^{(1)}$ and $F^{(2)}$.
$~$

\section{Construction of the stationary Hastings-Levitov process} \label{construction:amanda}

Consider an intensity 1 Poisson process $\tilde P$ in the half plane $\{(t,x):t>0 \}$, and let $\tilde A$ be the set of Poisson points.  
In this section, we prove the existence of a cadlag map $\tilde F:[0,\infty) \to \mathcal Z$ which satisfies conditions \eqref{item:back_start_id}
, \eqref{item:back_adapt} and \eqref{item:back_not_move_between_jumps} in Definition \ref{def:BSHL}.

Almost surely, $\tilde P$ has only finitely many atoms in any compact set and $\tilde P|_{\{t\}\times\R}$ has at most one atom. So without loss of generality we restrict to the event on which this holds. For each $n \in \BN$, there exists some cadlag $G^{(n)}:[0,\infty) \to \mathcal Z$ such that    
\[
G^{(n)}_t(z)= z + \sum_{(s,x)\in \tilde{A}_{t,n}} \left[ \tilde{s}_x(G^{(n)}_{s-}(z)) - G^{(n)}_{s-}(z) \right].
\]
Indeed, if $\tilde{A}\cap \{ (t,x): |x| \leq n \} = \{ (t_1,x_1), (t_2,x_2), \dots \}$ where $0<t_1< t_2 < \cdots $, then 
\begin{equation}\label{eq:slitcompn}
G^{(n)}_t(z) = \begin{cases}
z &\mbox{ for } 0 \leq t < t_1; \\
\tilde{s}_{x_k} \circ \cdots \circ \tilde{s}_{x_1}(z) &\mbox{ for } t_k \leq t < t_{k+1},
\end{cases}
\end{equation}
satisfies the required conditions.

\begin{lemma} \label{lemma:egbound}
For each $t \in [0,\infty)$, there exists some $0<C(t)<\infty$  such that, for each $n \in \BN$ and $z \in \HH$, 
\[
\ev \left [ \sup_{s \leq t} \left | G^{(n)}_s(z) - z \right |^2\right ] \leq C(t).
\]
%Hence for all $0<\epsilon<1/2$, $z \in \HH$ and $t \in [0,\infty)$, almost surely there exists some $N \in \BN$ such that for all $n \geq N$ 
%\[
%\sup_{s \leq t} |G^{(n)}_s(z)-z| \leq n^{(1+\epsilon)/2}. 
%\]
%\note{Do we need the last statement anywhere?}
\end{lemma}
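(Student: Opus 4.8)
\emph{Proof plan.} The idea is to rerun the second-moment estimate from the first half of the proof of Proposition~\ref{prop:gunique}, now for the explicit approximants $G^{(n)}$ of \eqref{eq:slitcompn} and for $g=\SM-\mathrm{id}$, exploiting that the bounds in Lemma~\ref{lemma:slit_estimates} relevant to that estimate are uniform in $z\in\HH$ (and that $|\SM_x(w)-w|=|\sqrt{(w-x)^2-1}-(w-x)|\le 1$ for all $x\in\R$, $w\in\HH$, by the maximum principle). For each fixed $n$ the Poisson region $[-n,n]\times[0,t]$ has finite intensity, so $G^{(n)}$ jumps finitely often on $[0,t]$ and, compensating the jump measure,
\[
G^{(n)}_t(z)=z+M^{(n)}_t+D^{(n)}_t,\qquad D^{(n)}_t:=\int_0^t\!\!\int_{-n}^n\!\big[\SM_x(G^{(n)}_s(z))-G^{(n)}_s(z)\big]\,dx\,ds,
\]
with $(M^{(n)}_t)$ a c\`adl\`ag $L^2$ martingale of predictable quadratic variation $\int_0^t\!\int_{-n}^n|\SM_x(G^{(n)}_s(z))-G^{(n)}_s(z)|^2\,dx\,ds$; the drift integral is absolutely convergent for each $n$ by Cauchy--Schwarz and \eqref{eq:l2sx}. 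Since $\SM$ maps $\HH$ into $\HH$ every jump raises the imaginary part, so $\Im G^{(n)}_s(z)\ge\Im z>0$ for all $s$, and Lemma~\ref{lemma:slit_estimates} is available along the orbit.

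The martingale term is controlled uniformly and at once: by Doob's $L^2$ inequality and the $L^2$ isometry,
\[
\ev\Big[\sup_{s\le t}|M^{(n)}_s|^2\Big]\le 4\,\ev\!\left[\int_0^t\!\!\int_{-n}^n\!|\SM_x(G^{(n)}_s(z))-G^{(n)}_s(z)|^2\,dx\,ds\right]\le 4Ct,
\]
using $\int_{-n}^n|\SM_x(w)-w|^2\,dx\le\int_\R|\SM_x(w)-w|^2\,dx<C/(1+\Im w)\le C$ from \eqref{eq:l2sx}, which holds for every $w\in\HH$ with an absolute constant $C$.

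For the drift I would use \eqref{eq:sxdrift} to write $\int_{-n}^n[\SM_x(w)-w]\,dx=i\pi/2-r_n(w)$ with $r_n(w)=\lim_{m\to\infty}\int_{n<|x|\le m}[\SM_x(w)-w]\,dx$, and \eqref{eq:n1sx} to get $|r_n(w)|\le\pi/2+C$ as soon as $n\ge 4\vee 2|\Re w|$; so the drift rate is $O(1)$ \emph{provided the orbit's real part stays central}. Introduce the exit time $\tau_n=\inf\{s:|\Re G^{(n)}_s(z)|>n/4\}$. On $[0,t\wedge\tau_n]$ (for $n$ at least an absolute constant and, say, $n\ge 8|\Re z|$) estimate \eqref{eq:n1sx} applies and the displacements have quadratic-variation rate $\le C$, so the stopped version of the previous two steps gives $\ev[\sup_{s\le t\wedge\tau_n}|\Re G^{(n)}_s(z)-\Re z|^2]=O(t+t^2)$ and hence $\prob(\tau_n\le t)=O((t+t^2)/n^2)$ (using $|\SM_x(w)-w|\le1$ to bound the overshoot at $\tau_n$). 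On $\{\tau_n>t\}$ estimate \eqref{eq:n1sx} holds throughout $[0,t]$, so $\sup_{s\le t}|D^{(n)}_s|\le(\pi/2+C)t$ and, with the martingale bound, $\ev[\mathbbm{1}_{\{\tau_n>t\}}\sup_{s\le t}|G^{(n)}_s(z)-z|^2]=O(t+t^2)$; on $\{\tau_n\le t\}$ one falls back on the $w$-uniform bounds $|\SM_x(w)-w|\le1$ and $|\int_{-n}^n[\SM_x(w)-w]\,dx|<C\log n$ from \eqref{eq:n0sx}, giving $\sup_{s\le t}|D^{(n)}_s|\le Ct\log n$, so that $(\log n)^2\,\prob(\tau_n\le t)$ is bounded uniformly in $n$.

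The one genuinely delicate point — and where I expect the real work to be — is making the bound uniform over \emph{all} $z\in\HH$, i.e.\ the regime $n<8|\Re z|$ that occurs when $|\Re z|$ is large. There the orbit starts at distance $\gtrsim|\Re z|$ from every slit, and the plan is to show, via an exit time of the region $\{|\Re w|>\tfrac34|\Re z|\}$ (using \eqref{eq:l2sx} for the quadratic variation and the near-/far-field split of $\SM_x(w)-w$ for the drift, both now of size $O(1/|\Re z|)$), that with probability $1-O((t+t^2)/|\Re z|^2)$ every displacement along the orbit is $O(1/|\Re z|)$; on that event $\sup_{s\le t}|G^{(n)}_s(z)-z|$ is dominated by $O(1/|\Re z|)$ times a $\mathrm{Poisson}(2nt)$ count with $2nt\le|\Re z|t$, contributing $O(t+t^2)$, while the complementary event is absorbed using its smallness against a Cauchy--Schwarz estimate on higher Poisson moments together with $|\SM_x(w)-w|\le1$. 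Summing the (at most four) contributions then yields $\ev[\sup_{s\le t}|G^{(n)}_s(z)-z|^2]\le C(t)$ with $C(t)$ depending on $t$ only; if one is content with $C$ depending also on $z$, this last paragraph is unnecessary and the argument stops after the third.
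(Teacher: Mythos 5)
Your proposal is correct and follows essentially the same route as the paper's proof: the martingale--compensator decomposition, Doob's inequality together with \eqref{eq:l2sx} for the martingale part, and a stopping-time localization so that the $O(1)$ drift bound \eqref{eq:n1sx} applies before the exit time while the $C\log n$ bound \eqref{eq:n0sx} combined with a Chebyshev estimate on the exit probability absorbs the complementary event. The only substantive divergence is exactly at the point you flag as delicate: the paper stops at $T_n=\inf\{t\geq 0:|G^{(n)}_t(z)|>n^{1/2}/4\}$ and its Chebyshev bound carries a $|z|^2$ term, so as written it yields a constant of the form $C(t)(1+|z|^2)$ rather than one uniform in $z$ --- which is in fact how the lemma is consumed downstream (Lemma \ref{lemma:sumg} immediately reinstates the $z$-dependence via $C_0(t,z)=2(C(t)+|z|^2)$) --- so your final paragraph on the regime $n\ll|\Re z|$ is additional care that the paper does not attempt and that the application does not require.
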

\begin{proof}
Fix $z \in \HH$ and set
\[
T_n = \inf \{t \geq 0: |G^{(n)}_t(z)| > n^{1/2}/4 \}.
\]
Now
\[
G^{(n)}_t(z) - z = M_t^{(n)}(z) + D_t^{(n)}(z)
\]
where $\left ( M_t^{(n)}(z) \right )$ is a zero-mean martingale and
\[
D_t^{(n)}(z) = \int_0^t \int_{-n}^n \left[ \tilde{s}_x(G^{(n)}_{s}(z)) - G^{(n)}_{s}(z) \right] dx ds.
\]
Hence
\[
\ev \left [ \sup_{s \leq t} \left | G^{(n)}_s(z) - z \right |^2\right ] \leq 2 \ev \left [ \sup_{s \leq t} \left | M^{(n)}_s(z) \right |^2\right ] + 2 \ev \left [ \sup_{s \leq t} \left | D^{(n)}_s(z) \right |^2\right ].
\]
Observe that
\begin{align*}
\ev \left [ |M_t^{(n)}(z)|^2 \right ] 
&= \ev \left [ \int_0^t \int_{-n}^n \left| \tilde{s}_x(G^{(n)}_{s}(z)) - G^{(n)}_{s}(z) \right|^2 dx ds \right ] \\
&\leq \ev \left [ \int_0^t \frac{C}{1+\Im G^{(n)}_{s}(z)} ds \right ] \\
&\leq \frac{C t}{1+\Im z},
\end{align*}
where $C$ is the absolute constant in \eqref{eq:l2sx}. 

By \eqref{eq:n0sx},
\[
\left | \int_0^t \int_{-n}^n \left[ \tilde{s}_x(G^{(n)}_{s}(z)) - G^{(n)}_{s}(z) \right] dx \right | \leq C t \log n,
\] 
and if $n \geq 4$ and $s < T_n$ then by \eqref{eq:n1sx}
\[
\left | \int_{-n}^n \left[ \tilde{s}_x(G^{(n)}_{s}(z)) - G^{(n)}_{s}(z) \right] dx \right | \leq C.
\] 
Therefore
\[
\ev \left [ \sup_{s \leq t} \left | D^{(n)}_s(z) \right |^2\right ] \leq C t^2 \left ( 1 + (\log n)^2 \prob(T_n < t) \right ).
\]
Hence
\begin{equation}
\label{eq:gnest}
\ev \left [ \sup_{s \leq t} \left | G^{(n)}_s(z) - z \right |^2\right ] \leq Ct(1+t)\left ( 1 + (\log n)^2 \prob(T_n < t) \right )
\end{equation}
for some (different) absolute constant $C$. By Chebychev's inequality,  
\[
\prob(T_n < t) \leq \frac{C \left ( t(1+t)( 1 + (\log n)^2  ) + |z|^2 \right )}{ n}.
\]
The result holds by substituting this back into \eqref{eq:gnest} and taking the supremum over $n$. 
%The second result holds by Borel-Cantelli.
\end{proof}

\begin{lemma} \label{lemma:sumg} For each $z \in \HH$ and $t \in [0,\infty)$,
\[
\sup_{m \geq n} \ev \left [ \sup_{s \leq t} \left | G^{(m)}_s(z) - G^{(n)}_s(z) \right |^2 \right ] \to 0
\]
and
\[
\sup_{m \geq n} \ev \left [ \sup_{s \leq t} \left | \frac{\partial}{\partial z} \left ( G^{(m)}_s(z) - G^{(n)}_s(z) \right ) \right |^2 \right ] \to 0
\]
as $n \to \infty$.
\end{lemma}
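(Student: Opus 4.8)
The plan is to realise the difference $G^{(m)} - G^{(n)}$ (for $m \geq n$) as a Gr\"onwall-type contraction whose inhomogeneous term comes from the far-away slits — those at positions $n < |x| \leq m$ — and then to deduce the derivative bound from the first assertion via Cauchy's integral formula. Fix $z \in \HH$ and $t > 0$, set $H(s) = G^{(m)}_s(z) - G^{(n)}_s(z)$ and $S(s) = \sup_{u \leq s}|H(u)|$, write $g(w) = \tilde s(w) - w$, and, as in the proof of Lemma~\ref{lemma:egbound}, decompose $G^{(k)}_s(z) - z = M^{(k)}_s + D^{(k)}_s$ into a zero-mean martingale with $\ev\bigl[|M^{(k)}_t|^2\bigr] = \ev\bigl[\int_0^t\int_{-k}^k|g(G^{(k)}_s(z) - x)|^2\,dx\,ds\bigr]$ and the compensator $D^{(k)}_t = \int_0^t\int_{-k}^k g(G^{(k)}_s(z) - x)\,dx\,ds$; recall $\Im G^{(k)}_s(z) \geq \Im z$ for all $k,s$, and that $\ev[S(t)^2] < \infty$ by Lemma~\ref{lemma:egbound}. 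Both $M^{(m)}_t - M^{(n)}_t$, and (after adding and subtracting $\int_{-n}^n g(G^{(m)}_s(z) - x)\,dx$) $D^{(m)}_t - D^{(n)}_t$, split into a ``common-region'' part supported on $|x| \leq n$ and an ``annular'' part supported on $n < |x| \leq m$; the two martingale parts are driven by the Poisson process on disjoint sets, hence orthogonal, so their second moments add. For the common-region parts I would repeat the argument of Proposition~\ref{prop:gunique} essentially verbatim: using $g(a) - g(b) = (a-b)\int_0^1 g'(b + u(a-b))\,du$, the bound $\Im G^{(k)}_s(z) \geq \Im z$, and the slit estimates \eqref{eq:sxmonoton} (for the martingale) and \eqref{eq:l1sxprime} (for the drift), they contribute at most $C(\Im z, t)\int_0^t \ev[S(s)^2]\,ds$ to $\ev[S(t)^2]$, with a constant depending on $z$ only through $\Im z$.

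The annular parts form the source term, and bounding them is the crux. For the martingale annular part, \eqref{eq:n2sx} gives $\int_{n < |x| \leq m}|g(w-x)|^2\,dx \leq \int_{|x| > n}|\tilde s_x(w) - w|^2\,dx < C/n$ whenever $n \geq 4 \vee 2|\Re w|$; for the drift annular part one needs a quantitative refinement of \eqref{eq:sxdrift} — namely $\int_{n < |x| \leq m}(\tilde s_x(w) - w)\,dx = w\bigl(\tfrac1n - \tfrac1m\bigr) + O\bigl((1 + |w|^2)n^{-2}\bigr)$ for $n \geq 4 \vee 2|\Re w|$, which follows from the expansion $\tilde s_x(w) - w = -\tfrac{1}{2(w - x)} + O(|w - x|^{-3})$ behind the computation in Appendix~\ref{app:3.3}. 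Evaluated at $w = G^{(m)}_s(z)$, both are of order $n^{-1}$ times a power of $|G^{(m)}_s(z)|$, except on the event that $|\Re G^{(m)}_s(z)|$ is too large; to handle that event I would introduce, exactly as in the proof of Lemma~\ref{lemma:egbound}, the stopping time $T_n = \inf\{s : |G^{(m)}_s(z)| > n^{1/2}/4\}$, so that the estimates above apply for $s < T_n$, while $\prob(T_n < t) \to 0$ by Lemma~\ref{lemma:egbound} and Chebyshev; on $\{T_n < t\}$ one uses the crude a priori bounds \eqref{eq:l2sx} and \eqref{eq:n0sx} (the latter in a form uniform in $m$) together with the moment bound of Lemma~\ref{lemma:egbound} to absorb $\sup_{s \leq t}|G^{(m)}_s(z)|$. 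The outcome is that the annular contribution to $\ev[S(t)^2]$ is at most some $\epsilon_n$ with $\sup_{m \geq n}\epsilon_n \to 0$ as $n \to \infty$. This annular drift estimate is the step I expect to be the main obstacle: in contrast to the uniqueness proof, where both solutions use the same truncation level, here the mismatch of levels leaves a genuine far-field drift whose smallness requires the precise rate in \eqref{eq:sxdrift} — not merely the boundedness \eqref{eq:n1sx} — and a bound uniform over all $m \geq n$.

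Combining the two parts with Doob's inequality gives $\ev[S(t)^2] \leq \epsilon_n + C(\Im z, t)\int_0^t \ev[S(s)^2]\,ds$, and since $\ev[S(t)^2]$ is finite and nondecreasing in $t$, Gr\"onwall's inequality yields $\ev[S(t)^2] \leq \epsilon_n\, e^{C(\Im z, t)t}$, which is the first assertion, uniformly over $m \geq n$. For the second assertion, note that all the constants above depend on $z$ only through $\Im z$ and $|z|$ and hence are uniformly bounded for $\zeta$ ranging over the circle $\{|\zeta - z| = \Im z / 2\} \subset \HH$; applying the first assertion on that circle and then Cauchy's integral formula
\[
\frac{\partial}{\partial z}\bigl(G^{(m)}_s - G^{(n)}_s\bigr)(z) = \frac{1}{2\pi i}\oint_{|\zeta - z| = \Im z/2}\frac{G^{(m)}_s(\zeta) - G^{(n)}_s(\zeta)}{(\zeta - z)^2}\,d\zeta,
\]
together with the Cauchy--Schwarz inequality on the contour, gives $\ev\bigl[\sup_{s \leq t}|\tfrac{\partial}{\partial z}(G^{(m)}_s - G^{(n)}_s)(z)|^2\bigr] \leq \tfrac{4}{(\Im z)^2}\sup_{|\zeta - z| = \Im z/2}\ev\bigl[\sup_{s \leq t}|G^{(m)}_s(\zeta) - G^{(n)}_s(\zeta)|^2\bigr] \to 0$ uniformly over $m \geq n$.
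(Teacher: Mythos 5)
Your overall architecture is the same as the paper's: decompose $G^{(m)}-G^{(n)}$ into martingale plus drift, split each into a common part on $|x|\le n$ (handled by the interpolation argument and \eqref{eq:sxmonoton}, exactly as in Proposition \ref{prop:gunique}) and an annular part on $n<|x|\le m$ (handled by \eqref{eq:n2sx} and a refined version of \eqref{eq:sxdrift} inside the stopping time, crude bounds outside), and close with Gr\"onwall. The refined annular drift expansion you state is correct and is what the paper means by ``the same argument as in the proof of Lemma \ref{lemma:slit_estimates}''. Your treatment of the derivative via Cauchy's integral formula on the circle $|\zeta-z|=\Im z/2$ (with Cauchy--Schwarz on the contour and Fubini to pull $\sup_s$ and $\ev$ through the integral) is a legitimate and arguably cleaner alternative to the paper's route, which simply reruns the whole estimate for $\partial_z G^{(m)}-\partial_z G^{(n)}$.

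However, there is a genuine gap, and it sits exactly at the step you flagged as ``the main obstacle'': the contribution of the annular drift on the bad event $\{T_n<t\}$. There is no ``form of \eqref{eq:n0sx} uniform in $m$'': the only unconditional bound on $\bigl|\int_{-m}^{m}(\SM_x(w)-w)\,dx\bigr|$ is $C\log m$, so on $\{T_n<t\}$ the crude bound on $\sup_{s\le t}|D^{(m)}_s-D^{(n)}_s|$ is of order $t\log m$, and the resulting term $t^2(\log m)^2\,\prob(T_n<t)\asymp t^2(\log m)^2/n$ is \emph{not} small uniformly over $m\ge n$ (take $m=e^n$). So the quantity $\epsilon_n$ entering your Gr\"onwall inequality is not uniform in $m$, and the direct argument for arbitrary $m\ge n$ does not close as written. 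The paper's resolution is to first restrict to $n\le m\le 2n$, where $\log m\asymp\log n$ and one obtains the quantitative rate $\ev[\sup_{s\le t}|G^{(m)}_s(z)-G^{(n)}_s(z)|^2]\le C(t,z)(\log n)^2/n$, and then to remove the restriction by chaining dyadically: writing $m\in[2^kn,2^{k+1}n)$ and applying the triangle inequality in the $L^2(\sup)$ norm gives $\sum_{j}C(t,z)^{1/2}(2^{j-1}n)^{-1/2}\log(2^{j}n)\lesssim C(t,z)^{1/2}n^{-1/2}\log n\sum_j j2^{-j/2}<\infty$. An alternative patch that stays within your scheme is to split the bad event further into $\{T_n<t\le T_m\}$, where \eqref{eq:n1sx} applies to the $\int_{-m}^m$ piece (since $|\Re G^{(m)}_s|\le\sqrt m/4\le m/2$) and only the $m$-independent $C\log n$ from $\int_{-n}^n$ survives, and $\{T_m<t\}$, where $\prob(T_m<t)\lesssim(\log m)^2/m$ beats the $(\log m)^2$ in the crude bound; but some such device is required, and your proposal supplies neither it nor the dyadic chaining.
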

\begin{proof}
Suppose first that $n \leq m \leq 2n$. Define $T_n$ as in the previous proof and set $T=T_n \wedge T_m$, so that
\begin{equation}
\label{eq:tbound}
\prob(T < t) \leq  \prob(T_n < t) \leq \frac{2\ev \left [ \sup_{s \leq t} \left | G^{(n)}_s(z) - z \right |^2\right ]+2|z|^2}{n} \leq \frac{C_0(t,z)}{n},
\end{equation}
where $C_0(t,z)=2(C(t)+|z|^2)$, with $C(t)$ from Lemma \ref{lemma:egbound}.
Using the notation in the previous proof, 
\begin{align*}
G^{(m)}_t(z) - G^{(n)}_t(z) = M_t^{(m)}(z)-M_t^{(n)}(z) + D_t^{(m)}(z)-D_t^{(n)}(z),
\end{align*}
so 
\begin{align*}
\ev \left [ \sup_{s \leq t} \left | G^{(m)}_s(z) - G^{(n)}_s(z) \right |^2 \right ] 
\leq & \ 2 \ev \left [ \sup_{s \leq t}\left|M_s^{(m)}(z)-M_s^{(n)}(z)\right |^2 \right ] \\
& \ + 2\ev \left [ \sup_{s \leq t}\left|D_s^{(m)}(z)-D_s^{(n)}(z)\right |^2 \right ].
\end{align*}
Decompose
\begin{align*}
& \ev \left [ \sup_{s \leq t}\left|M_s^{(m)}(z)-M_s^{(n)}(z)\right |^2  \right ] \\
\leq & \ 8\ev \left [ \int_0^t \int_{-n}^n \left| (\tilde{s}_x-\mathrm{id})(G^{(m)}_{s}(z)) - (\tilde{s}_x-\mathrm{id})(G^{(n)}_{s}(z)) \right|^2 dx ds \right ]\\
& \ + 8 \ev \left [ \int_0^t \int_{n<|x|<m} \left| \tilde{s}_x(G^{(m)}_{s}(z)) - G^{(m)}_{s}(z) \right|^2 dx ds \right ]. 
\end{align*}
Now, for $s<t$, 
\begin{align*}
& \ev \left [ \int_{n<|x|<m} \left| \tilde{s}_x(G^{(m)}_{s}(z)) - G^{(m)}_{s}(z) \right|^2 dx ds \right ] \\
\leq & \ \ev \left [ \int_{|x|>n} \left| \tilde{s}_x(G^{(m)}_{s}(z)) - G^{(m)}_{s}(z) \right|^2 dx ds, T > t \right ] \\
& \ + \ev \left [ \int_{-\infty}^{\infty} \left| \tilde{s}_x(G^{(m)}_{s}(z)) - G^{(m)}_{s}(z) \right|^2 dx ds, T < t \right ] \\
&\leq \frac{C_1(t,z)}{n},
\end{align*}
for some $0<C_1(t,z)<\infty$. Here, the last line used \eqref{eq:n2sx}, \eqref{eq:l2sx} and \eqref{eq:tbound}.
Also
\[
\left| (\tilde{s}_x-\mathrm{id})(G^{(m)}_{s}(z)) - (\tilde{s}_x-\mathrm{id})(G^{(n)}_{s}(z)) \right| \leq \left | G^{(m)}_{s}(z) - G^{(n)}_{s}(z) \right |
 \int_0^1 \left| \tilde{s}_x'(\gamma(u)) - 1 \right| du, 
 \]
where $\gamma(u)$ is the curve linearly interpolating between $G^{(n)}_{s}(z)$ and $G^{(m)}_{s}(z)$.
Hence, using Jensen, Fubini, \eqref{eq:sxmonoton}, and
setting
\[
g(t) = \ev \left [ \sup_{s \leq t}\left|G_s^{(m)}(z)-G_s^{(n)}(z)\right |^2 \right ],
\]
we have
\begin{align*}
\ev \left [ \sup_{s \leq t}\left|M_s^{(m)}(z)-M_s^{(n)}(z)\right |^2 ; t< T \right ] \leq  C_2(t,z) \left ( \int_0^t g(s) ds+ \frac{1 }{n} \right ).
\end{align*}

By the same argument as used in the proof of Lemma \ref{lemma:slit_estimates}, if $|\Re G_s^{(m)}(z)| < \sqrt{n}$ then
\[
\left| \int_{n<|x|<m}  \left [ \tilde{s}_x(G^{(m)}_{s}(z)) - G^{(m)}_{s}(z)  \right ] dx \right|^2 < \frac{C}{n}.
\]
Hence, a similar argument to that above gives
\[
\ev \left [ \sup_{s \leq t}\left|D_s^{(m)}(z)-D_s^{(n)}(z)\right |^2 ; t< T \right ] \leq  C_3(t,z) \left ( \int_0^t g(s) ds+ \frac{(\log n)^2 }{n} \right ).
\]
Therefore
\[
g(t) \leq C_4(t,z) \left ( \int_0^t g(s) ds+ \frac{(\log n)^2 }{n} \right )
\]
and so by Gronwall's Lemma, setting $C(t,z) = \sup_{s\leq t} C_4(s,z) \exp ( t C_4(s,z))$,
\[
g(t) \leq \frac{C(t,z)(\log n)^2}{n}.
\]
It follows that
\[
\sup_{n \leq m \leq 2n} \ev \left [ \sup_{s \leq t} \left | G^{(m)}_s(z) - G^{(n)}_s(z) \right |^2 \right ] \leq \frac{C(t,z)(\log n)^2}{n}.
\]

Now drop the upper bound restriction on $m$ and let $m \geq n$ be arbitrary. Then there exists $k \in \mathbb{N}$ such that $2^k n \leq m < 2^{k+1}n$. By the triangle inequality
\begin{align*}
\ev \left [ \sup_{s \leq t} \left | G^{(m)}_s(z) - G^{(n)}_s(z) \right |^2 \right ]^{1/2}
\leq & \ \sum_{j=1}^k \ev \left [ \sup_{s \leq t} \left | G^{(2^jn)}_s(z) - G^{(2^{j-1}n)}_s(z) \right |^2 \right ]^{1/2} \\ 
& \ + \ev \left [ \sup_{s \leq t} \left | G^{(m)}_s(z) - G^{(2^kn)}_s(z) \right |^2 \right ]^{1/2} \\
\leq & \ C(t,z)^{1/2} n^{-1/2} \log n \sum_{j=0}^{\infty} j 2^{-j/2}.  
\end{align*}
The sum is finite from which the result follows.

Very similar arguments give the result for the derivative.

%\note{Need to double check details but don't think this needs to be typed up.}
\end{proof}

The following theorem shows the existence of the SHL$(0)$ process.

\begin{theorem}\label{thm:constworks}
For any compact $t \in [0, \infty)$ and $K \subset \HH$
\[
\sup_{m \geq n} \ev \left [ \sup_{s \leq t} \sup_{z \in K} \left | G^{(m)}_s(z) - G^{(n)}_s(z) \right |^2 \right ] \to 0 
\]
as $n \to \infty$ and hence the maps $G^{(n)}$ form a Cauchy sequence in the topology of mean-squared convergence on compact subsets of $\HH$ and compact time intervals. 

 The limit is a cadlag map $\tilde F:[0,\infty) \to \mathcal Z$ which satisfies conditions \eqref{item:back_start_id}
, \eqref{item:back_adapt} and \eqref{item:back_not_move_between_jumps} in Definition \ref{def:BSHL}.
\end{theorem}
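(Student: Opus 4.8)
The plan is to promote the pointwise (in $z$) mean‑square estimate of Lemma~\ref{lemma:sumg} to one uniform on compact $z$‑sets, extract from it an almost surely convergent subsequence, and then check that the resulting limit lies in $\mathcal Z$ and satisfies the three axioms. For the first part, write $H^{(m,n)}_s(z)=G^{(m)}_s(z)-G^{(n)}_s(z)$; for each fixed $s$ and realization this is holomorphic in $z$, so $|H^{(m,n)}_s|^2$ is subharmonic on $\HH$. Given a compact $K\subset\HH$, put $r=\tfrac12\,\mathrm{dist}(K,\partial\HH)>0$ and let $K'=\{w:\mathrm{dist}(w,K)\le r\}$, a compact subset of $\HH$. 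The sub‑mean‑value property over discs of radius $r$ gives $\sup_{z\in K}|H^{(m,n)}_s(z)|^2\le(\pi r^2)^{-1}\int_{K'}|H^{(m,n)}_s(w)|^2\,dA(w)$; taking the supremum over $s\le t$, then expectation, then applying Tonelli yields, for every $m\ge n$,
\[
\ev\Big[\sup_{s\le t}\sup_{z\in K}\big|H^{(m,n)}_s(z)\big|^2\Big]\le\frac{1}{\pi r^2}\int_{K'}\ev\Big[\sup_{s\le t}\big|H^{(m,n)}_s(w)\big|^2\Big]\,dA(w).
\]
The bound $\sup_{m\ge n}\ev[\sup_{s\le t}|H^{(m,n)}_s(w)|^2]\le C(t,w)(\log n)^2/n$ coming out of the proof of Lemma~\ref{lemma:sumg} has $C(t,w)$ continuous and finite on $\HH$, hence bounded on $K'$, so the right side is $O((\log n)^2/n)$ uniformly in $m\ge n$ and tends to $0$: this is the first assertion, and being a Cauchy criterion it gives the claimed mean‑square convergence on compacts. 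Finally, choosing along a compact exhaustion $K_j\uparrow\HH$ and $t_j\uparrow\infty$ a subsequence $n_k$ with $\ev[\sup_{s\le t_k}\sup_{z\in K_k}|H^{(n_{k+1},n_k)}_s(z)|^2]\le 2^{-k}$, Chebyshev and Borel--Cantelli make $(G^{(n_k)}_s(z))_k$ almost surely uniformly Cauchy on every $K_j\times[0,t_j]$; call the limit $\tilde F$. As a locally uniform limit of maps holomorphic in $z$, each $\tilde F_s$ is holomorphic; as a limit, uniform on compact time intervals, of cadlag maps, $s\mapsto\tilde F_s$ is cadlag.

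Next I would check $\tilde F_s\in\mathcal Z$ and the two easy axioms. Since $\SM-\mathrm{id}$ maps $\HH$ into $\HH$ (as used for \eqref{eq:imincrease} in the proof of Proposition~\ref{prop:gunique}), each $G^{(n)}_s$ satisfies $\Im G^{(n)}_s(z)\ge\Im z>0$, so the same holds for $\tilde F_s$; thus $\tilde F_s:\HH\to\HH$, and since $\sup_{z\in\HH}\Im\tilde F_s(z)=\infty$ the map $\tilde F_s$ is non‑constant. By Hurwitz's theorem a locally uniform limit of the injective maps $G^{(n_k)}_s$ is either injective or constant, so $\tilde F_s$ is conformal, i.e.\ $\tilde F_s\in\mathcal Z$. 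Axiom~\eqref{item:back_start_id} holds because $G^{(n)}_0=\mathrm{id}$. For~\eqref{item:back_adapt}: by \eqref{eq:slitcompn} the map $G^{(n)}_t\circ(G^{(n)}_s)^{-1}$ is a finite composition of slit maps $\SM_x$ over atoms of $\tilde P$ in $(s,t]\times[-n,n]$, hence $\tilde\CF_{s,t}$‑measurable; letting $k\to\infty$ (the inverses $(G^{(n_k)}_s)^{-1}$ converge locally uniformly because the limit is conformal, and compositions pass to the limit) shows $\tilde F_t\circ\tilde F_s^{-1}$ is $\tilde\CF_{s,t}$‑measurable.

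The remaining axiom~\eqref{item:back_not_move_between_jumps} is the heart of the matter. From \eqref{eq:slitcompn}, $G^{(n)}_t(z)-z=\sum_{(s,x)\in\tilde A_{t,n}}[\SM_x(G^{(n)}_{s-}(z))-G^{(n)}_{s-}(z)]$, and I must replace $G^{(n)}_{s-}$ by $\tilde F_{s-}$ inside. Put $\Sigma_n(z):=\sum_{(s,x)\in\tilde A_{t,n}}[\SM_x(\tilde F_{s-}(z))-\tilde F_{s-}(z)]$ and let $\Delta_n$ be the difference of the two sums. Interpolating linearly between $G^{(n)}_{s-}(z)$ and $\tilde F_{s-}(z)$ (both with imaginary part $\ge\Im z$) and using \eqref{eq:l1sxprime},
\[
\ev|\Delta_n|\le\frac{C}{\Im z}\,\ev\!\int_0^t\!\big|G^{(n)}_s(z)-\tilde F_s(z)\big|\,ds\le\frac{Ct}{\Im z}\,\Big(\ev\big[\sup_{s\le t}|G^{(n)}_s(z)-\tilde F_s(z)|^2\big]\Big)^{1/2},
\]
which tends to $0$. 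Since also $G^{(n)}_t(z)\to\tilde F_t(z)$ in $L^1$, we get $\Sigma_n(z)\to\tilde F_t(z)-z$ in $L^1$, and the rate $\ev[\sup_{s\le t}|G^{(n)}_s(z)-\tilde F_s(z)|^2]\le C(t,z)(\log n)^2/n$ inherited from Lemma~\ref{lemma:sumg} upgrades this to almost sure convergence along a fast subsequence via Borel--Cantelli. To see the \emph{full} limit in \eqref{eq:back_around_middle} exists and equals $\tilde F_t(z)-z$, I would show $\Sigma_n(z)$ is a.s.\ Cauchy by decomposing, for $n'\le n$, $\Sigma_n(z)-\Sigma_{n'}(z)$ into a martingale part in time (the integrand $s\mapsto\SM_x(\tilde F_{s-}(z))-\tilde F_{s-}(z)$ being predictable since $\tilde F$, a limit of the $G^{(n_k)}$, is adapted), bounded in $L^2$ by $\int_{|x|>n'}|\SM_x(w)-w|^2\,dx<C/n'$ from \eqref{eq:n2sx}, plus a drift part $\int_0^t\int_{n'<|x|\le n}[\SM_x(\tilde F_s(z))-\tilde F_s(z)]\,dx\,ds$ that is uniformly small for large $n'$ by the conditional convergence in \eqref{eq:sxdrift} together with \eqref{eq:n1sx} on the a.s.\ compact range $\{\tilde F_s(z):s\le t\}$. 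Extending from a countable dense set of $(t,z)$ by cadlag‑continuity then gives~\eqref{item:back_not_move_between_jumps}.

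The main obstacle is this last step: \eqref{eq:back_around_middle} forces one to pass to the limit $n\to\infty$ in a sum whose summands themselves depend on $n$ (through $G^{(n)}_{s-}$) and which converges only conditionally, so both the substitution error $\Delta_n$ and the non‑absolutely‑convergent tail of $\Sigma_n(z)$ must be controlled — this is exactly what the delicate slit estimates \eqref{eq:sxdrift}, \eqref{eq:n1sx} and \eqref{eq:n2sx} of Lemma~\ref{lemma:slit_estimates} are designed for. By comparison, the subharmonic upgrade of the first part and the verification of the initial‑condition and adaptedness axioms are routine.
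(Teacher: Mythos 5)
Your proposal is correct, but the first (and main quantitative) step is carried out by a genuinely different mechanism than in the paper. To upgrade the pointwise bound of Lemma \ref{lemma:sumg} to one uniform over a compact $K\subset\HH$, the paper covers $K$ by finitely many balls $B(z_i,r_i)$ with $r_i<\Im z_i/2\wedge 1$, writes $|G^{(m)}_t(z)-G^{(n)}_t(z)|\le |G^{(m)}_t(z_i)-G^{(n)}_t(z_i)|+|z-z_i|\sup_{w\in B(z_i,r_i)}|\partial_z(G^{(m)}_t-G^{(n)}_t)(w)|$, and controls the supremum of the derivative via a Koebe-type distortion bound ($|f'(w)|\le 16|f'(z_0)|$ on $B(z_0,r/2)$), so that it crucially needs the \emph{second} assertion of Lemma \ref{lemma:sumg}, the mean-square smallness of $\partial_z(G^{(m)}-G^{(n)})$ at the centres $z_i$. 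You instead exploit that $|G^{(m)}_s-G^{(n)}_s|^2$ is subharmonic in $z$ and use the sub-mean-value inequality over discs of radius $r=\tfrac12\,\mathrm{dist}(K,\partial\HH)$ to dominate $\sup_{z\in K}$ by an area average over a slightly larger compact $K'$, after which Tonelli and the pointwise rate $C(t,w)(\log n)^2/n$ (locally bounded in $w$, as it depends on $w$ only through $|w|$ and $1/\Im w$) finish the job. This buys you two things: you never need the derivative half of Lemma \ref{lemma:sumg}, and you avoid the slightly awkward point that the paper's distortion estimate is stated for conformal maps while it is applied to the difference $G^{(m)}-G^{(n)}$, which is merely holomorphic; your route is cleaner on that score. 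For the second half of the theorem the paper simply declares the verification of conditions \eqref{item:back_start_id}, \eqref{item:back_adapt} and \eqref{item:back_not_move_between_jumps} ``straightforward,'' whereas you supply the substance: Hurwitz for membership in $\mathcal Z$, the finite-composition structure \eqref{eq:slitcompn} for adaptedness, and, for the growth condition, the substitution estimate via \eqref{eq:l1sxprime} together with the martingale-plus-drift control of the conditionally convergent tail via \eqref{eq:n1sx}, \eqref{eq:n2sx} and \eqref{eq:sxdrift}. That last step is indeed where the only real work remains (one still has to make the a.s.\ convergence of the full, order-dependent limit in \eqref{eq:back_around_middle} precise, e.g.\ by Borel--Cantelli along dyadic $n$ plus an oscillation bound in between), but your outline identifies exactly the right estimates and is in fact more complete than the published argument.
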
  

\begin{remark}
{Note that one can construct SHL$(0)$ also via composition of solutions  of the Loewner equation, growing countably many slits at a time.  A partial proof of this construction by the authors and Jacob Kagan appears in the Ph.D Thesis of Jacob Kagan \cite{jacob}. }
\end{remark}

\begin{proof}
Suppose $f: \HH \to \BC$ is any conformal mapping. Then for any $z_0 \in \HH$, define $g:\{|z|<1\} \to \BC$ by
\[
g(z) = \frac{f(z_0+rz)}{rf'(z_0)}
\] 
where $r \leq \Im z_0$. Then $g$ is a conformal mapping with $g'(0) = 1$ and hence, by standard distortion estimates
\[
|g'(z)| \leq \frac{1+|z|}{(1-|z|)^3}. 
\]
It follows that if $w \in B(z_0, r/2)$ then
\[
|f'(w)| \leq 16 |f'(z_0)|.
\]
For any compact set $K \subset \HH$, there exist $N \in \BN$, $z_1, \dots , z_N \in K$ and $0<r_i<\Im z_i / 2 \wedge 1$ for $i=1, \dots, N$ such that 
\[
K \subseteq \bigcup_{i=1}^N B(z_i, r_i).
\]
Then if $z \in K$, there exists $i$ s.t. $z \in B(z_i, r_i)$. Then
\begin{align*}
|G^{(m)}_t(z)-G^{(n)}_t(z)|
&\leq |G^{(m)}_t(z_i)-G^{(n)}_t(z_i)| + |z-z_i| \sup_{w \in B(z_i, r_i)} \left | \frac{d}{dz} (G^{(m)}_t-G^{(n)}_t)(w) \right | \\
&\leq |G^{(m)}_t(z_i)-G^{(n)}_t(z_i)| + 16 \left | \frac{\partial}{\partial z} (G^{(m)}_t-G^{(n)}_t)(z_i) \right |
\end{align*}
 Hence
 \begin{align*}
 \ev \left [ \sup_{s \leq t} \sup_{z \in K} \left | G^{(m)}_s(z) - G^{(n)}_s(z) \right |^2 \right ] 
 &\leq  2\sum_{i=1}^N  \ev \left [ \sup_{s \leq t} \left | G^{(m)}_s(z_i) - G^{(n)}_s(z_i) \right |^2 \right ] \\
 &+ 32\sum_{i=1}^N  \ev \left [ \sup_{s \leq t} \left | \frac{\partial}{\partial z} (G^{(m)}_s-G^{(n)}_s)(z_i) \right |^2 \right ].
 \end{align*}
 The result then follows from Lemma \ref{lemma:sumg}.

Checking that the limit satisfies the required conditions is straightforward.
\end{proof}

Next we discuss two basic properties, mainly Markovity and ergodicity, of SHL$(0)$ which we prove as corollaries from the construction in this section.

\begin{proposition}\label{prop:markov}
Let $\tilde{F}_t$ and $\hat{F}_t$ be two independent backwards SHL$(0)$. Then for every $t,s>0$, $\tilde{F}_{t+s}=\hat{F}_s\circ \tilde{F}_t$, where the equality is in distribution.
\end{proposition}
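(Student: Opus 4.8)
The plan is to read off the statement from the explicit construction of the backward SHL$(0)$ process in Section~\ref{construction:amanda}, where $\tilde F$ is the mean-squared, locally uniform limit of the finite slit compositions $G^{(n)}$ driven by the Poisson process $\tilde P$. Fix $t,s>0$ and realise $\tilde F$ on a single driving process $\tilde P$. Split $\tilde P$ at time $t$: let $\tilde P'$ be the restriction of $\tilde P$ to $(0,t]\times\R$, and let $\tilde P''$ be the image of the restriction of $\tilde P$ to $(t,t+s]\times\R$ under the time-shift $(\tau,x)\mapsto(\tau-t,x)$. By the independence and stationarity of the Poisson process, $\tilde P'$ and $\tilde P''$ are independent, $\tilde P'$ is an intensity-$1$ Poisson process on $(0,t]\times\R$, and $\tilde P''$ is an intensity-$1$ Poisson process on $(0,s]\times\R$; enlarging $\tilde P''$ by an independent intensity-$1$ Poisson process on $(s,\infty)\times\R$ gives a process $\tilde P'''$ which, via Theorem~\ref{thm:constworks}, drives a backward SHL$(0)$ process I will call $\hat F$. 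By adaptedness (condition~\eqref{item:back_adapt} of Definition~\ref{def:BSHL}), $(\hat F_u)_{u\le s}$ depends only on $\tilde P''$, hence is independent of $\tilde P'$ and therefore of $(\tilde F_u)_{u\le t}$, which by the same token depends only on $\tilde P'$.

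The next step is an exact identity at the level of the finite approximations: writing $H^{(n)}$ for the level-$n$ approximation \eqref{eq:slitcompn} driven by $\tilde P''$, I claim that for every realisation and every $n$,
\[
G^{(n)}_{t+s} = H^{(n)}_s \circ G^{(n)}_t .
\]
This is immediate from \eqref{eq:slitcompn}: if the atoms of $\tilde P$ with $|x|\le n$ and time $\le t+s$ are $(t_1,x_1),\dots,(t_N,x_N)$ listed in increasing order of time, with $t_1<\dots<t_j\le t<t_{j+1}<\dots<t_N\le t+s$, then $G^{(n)}_{t+s}=\tilde s_{x_N}\circ\dots\circ\tilde s_{x_1}$ and $G^{(n)}_t=\tilde s_{x_j}\circ\dots\circ\tilde s_{x_1}$, while the atoms of $\tilde P''$ with $|x|\le n$ and time $\le s$ are precisely $(t_{j+1}-t,x_{j+1}),\dots,(t_N-t,x_N)$, so $H^{(n)}_s=\tilde s_{x_N}\circ\dots\circ\tilde s_{x_{j+1}}$; composing gives the claim. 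The point is that the slit composition depends only on the ordering and the $x$-coordinates of the atoms, not on the absolute arrival times.

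It then remains to pass to the limit $n\to\infty$. Fix $z\in\HH$. By Theorem~\ref{thm:constworks}, $G^{(n)}_{t+s}(z)\to\tilde F_{t+s}(z)$ and $G^{(n)}_t(z)\to\tilde F_t(z)$ in $L^2$, and $H^{(n)}_s\to\hat F_s$ in $L^2$ uniformly on compact subsets of $\HH$. To handle $H^{(n)}_s\big(G^{(n)}_t(z)\big)$ I would use that $\Im G^{(n)}_t(z)\ge\Im z$ (slit maps preserve $\HH$, cf.\ \eqref{eq:imincrease}) together with $\sup_n \ev\big[\sup_{u\le t}|G^{(n)}_u(z)-z|^2\big]<\infty$ from Lemma~\ref{lemma:egbound} to get that $\{G^{(n)}_t(z)\}_n$ is tight in $\HH$; fixing a large compact $K\subset\HH$ with $\sup_n\prob\big(G^{(n)}_t(z)\notin K\big)<\eta$, on which $\hat F_s$ is continuous and $H^{(n)}_s\to\hat F_s$ uniformly, and combining with $G^{(n)}_t(z)\to\tilde F_t(z)$ in probability, one obtains $H^{(n)}_s\big(G^{(n)}_t(z)\big)\to\hat F_s\big(\tilde F_t(z)\big)$ in probability. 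Hence $\tilde F_{t+s}(z)=\hat F_s\big(\tilde F_t(z)\big)$ almost surely; applying this to a countable dense set of $z$ and using continuity in $z$ of the conformal maps $\tilde F_{t+s}$ and $\hat F_s\circ\tilde F_t$ (exactly as at the end of the proof of Theorem~\ref{thm:uniqueness}) yields $\tilde F_{t+s}=\hat F_s\circ\tilde F_t$ almost surely on this coupling.

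To conclude, in the coupling just built $\tilde F_t$ has the law of backward SHL$(0)$ at time $t$, $\hat F$ has the law of backward SHL$(0)$, and the two are independent; so for any independent pair of backward SHL$(0)$ processes the joint law of $(\tilde F_t,\hat F)$ is the same, whence $\hat F_s\circ\tilde F_t$ has the law of $\tilde F_{t+s}$, which is the claimed equality in distribution. I expect the only genuinely non-formal step to be the limit interchange in the third paragraph: neither the tightness of $G^{(n)}_t(z)$ in $\HH$ nor the local-uniform convergence of $H^{(n)}_s$ is hard given the estimates of Section~\ref{construction:amanda}, but both are needed to legitimately pass the composition to the limit.
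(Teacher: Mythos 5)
Your proof is correct and follows essentially the same route as the paper: the exact factorization $G^{(n)}_{t+s}=H^{(n)}_s\circ G^{(n)}_t$ at the level of finite slit compositions, independence of the two factors from splitting the Poisson process at time $t$, and passage to the limit via Theorem \ref{thm:constworks}. The only difference is that you spell out the limit interchange for the composition (tightness of $G^{(n)}_t(z)$ in $\HH$ plus locally uniform convergence of $H^{(n)}_s$), a step the paper compresses into ``using Theorem \ref{thm:constworks} three times.''
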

\begin{proof}
For any $t>0$ and the set of points occurring after time $t$, $\tilde{A}\cap\{(s,x):|x|\le n, s> t\}=\{(s_1,x_1),(s_2,x_2),\ldots\}$, abbreviate \[\hat{G}^{(n)}_{t,s}(z)= \begin{cases}
z &\mbox{ for } 0 \leq s+t < s_1; \\
\tilde{s}_{x_k} \circ \cdots \circ \tilde{s}_{x_1}(z) &\mbox{ for } s_k \leq s+t < s_{k+1}.
\end{cases}
\]
It is immediate from the definition that 
\begin{equation}\label{eq:mark}
G_{t+s}^{(n)}(z)=\hat{G}^{(n)}_{t,s}\circ G_t^{(n)}(z).
\end{equation} 
Moreover by the properties of $\tilde{A}$, $\hat{G}^{(n)}_{t,s}(z)$ and $G_t^{(n)}(z)$ are independent for every $n\in\BZ$. By using Theorem \ref{thm:constworks} three times on each of the functions in \eqref{eq:mark} we obtain the statement.
\end{proof}
Next we prove that the process is ergodic. First we show that composition of slit functions commutes with real translations. 
\begin{lemma}\label{lem:slitcomm}
For all $x_1,\ldots, x_n\in\BR$, and any $y\in\BR$
\[
\tilde{s}_{x_k} \circ \cdots \circ \tilde{s}_{x_1}(z-y)+y=\tilde{s}_{x_k+y} \circ \cdots \circ \tilde{s}_{x_1+y}(z)
.\]
\end{lemma}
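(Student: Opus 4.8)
The plan is to reduce everything to one elementary observation: conjugating the slit map by a real translation merely shifts its base point. Write $\tau_y(z)=z+y$ for translation by $y\in\BR$, a conformal bijection of $\HH$ with inverse $\tau_{-y}$. First I would record, straight from Definition \ref{def:slit_map}, that for all $x,y\in\BR$ and $z\in\HH$,
\[
\tilde s_{x+y}(z)=(x+y)+\tilde s\big(z-(x+y)\big)=y+\Big(x+\tilde s\big((z-y)-x\big)\Big)=\tau_y\big(\tilde s_x(\tau_{-y}(z))\big),
\]
i.e.\ $\tilde s_{x+y}=\tau_y\circ\tilde s_x\circ\tau_{-y}$ as maps $\HH\to\HH$. (Equivalently, one may invoke uniqueness in the Riemann mapping theorem: both sides are the normalised conformal map of $\HH$ onto $\HH$ minus the vertical slit based at $x+y$, so no branch-cut bookkeeping is needed.)

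Next I would substitute this identity into the composition on the right-hand side of the claim. Replacing $\tilde s_{x_j+y}$ by $\tau_y\circ\tilde s_{x_j}\circ\tau_{-y}$ for $j=1,\dots,k$ in $\tilde s_{x_k+y}\circ\cdots\circ\tilde s_{x_1+y}$, the interior factors $\tau_{-y}\circ\tau_y$ between consecutive terms cancel, leaving $\tau_y\circ\tilde s_{x_k}\circ\cdots\circ\tilde s_{x_1}\circ\tau_{-y}$; if preferred, this telescoping can be phrased as a one-line induction on $k$ with base case the display above and inductive step using that conjugation by $\tau_y$ respects composition. Evaluating at $z$ then gives
\[
\tilde s_{x_k+y}\circ\cdots\circ\tilde s_{x_1+y}(z)=\tau_y\Big(\tilde s_{x_k}\circ\cdots\circ\tilde s_{x_1}\big(\tau_{-y}(z)\big)\Big)=\tilde s_{x_k}\circ\cdots\circ\tilde s_{x_1}(z-y)+y,
\]
which is precisely the asserted identity.

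There is no genuine obstacle in this lemma; the only point worth a little care is to state and use the conjugation identity at the level of maps $\HH\to\HH$ before iterating, rather than manipulating nested square roots by hand. The lemma is then exactly what is needed to combine with the translation-invariance of the Poisson measure $\tilde P$ so as to make the construction $G^{(n)}$ of Section \ref{construction:amanda} translation covariant, which in turn feeds into the ergodicity argument.
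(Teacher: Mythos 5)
Your proof is correct and follows essentially the same route as the paper: both rest on the single-map identity $\tilde s_{x+y}(w+y)=\tilde s_x(w)+y$ (your conjugation identity $\tilde s_{x+y}=\tau_y\circ\tilde s_x\circ\tau_{-y}$) and then iterate it through the composition by induction/telescoping. The only difference is presentational — you phrase the identity at the level of maps and let the $\tau_{-y}\circ\tau_y$ factors cancel, while the paper writes out the corresponding pointwise computation — so nothing further is needed.
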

\begin{proof}
For all $z \in \mathbb{H}$ we get 
$$\tilde{s}_{x_k}\circ\tilde{s}_{x_{k-1}}(z)+y=x_k+y+\tilde{s}_0(\tilde{s}_{x_{k-1}}(z)-x_k-y+y)=\tilde{s}_{x_k+y}(\tilde{s}_{x_{k-1}}(z)+y).$$
The statement now follows by substituting $\tilde{s}_{x_{k-2}} \circ \cdots \circ \tilde{s}_{x_1}(z-y)$ in place of $z$ and using induction.
\end{proof}
\begin{proposition}\label{prop:ergodic}
SHL$(0)$ is ergodic with respect to horizontal shifts.
\end{proposition}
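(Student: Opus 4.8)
The plan is to realise (backward) SHL$(0)$ as a deterministic measurable function of the driving Poisson measure and to inherit ergodicity from that measure. Since horizontal shifts act only on the spatial variable they commute with the time-reversal correspondence of Section~\ref{sec:reverse} (immediate from the formulas there), so forward and backward SHL$(0)$ are isomorphic as measure-preserving systems under the shift group $\{\theta_y\}_{y\in\R}$, where $\theta_y(P,F)=(\tau_yP,\tau_yF)$ with $\tau_yP(A)=P(A+y)$ and $(\tau_yF)_t(z)=F_t(z+y)-y$; it is therefore enough to prove ergodicity for backward SHL$(0)$. First I would record that, by Theorem~\ref{thm:constworks}, $\tilde F$ is the limit in mean square on compacts of the explicit maps $G^{(n)}$ of \eqref{eq:slitcompn}, each of which is a measurable function of $\tilde P$; passing to an almost surely convergent subsequence gives $\tilde F=\Psi(\tilde P)$ for a measurable $\Psi\colon\mathcal P\to\mathcal X$, which is canonical by the uniqueness Theorem~\ref{thm:uniqueness}. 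Hence $\tilde\mu$ is the image of the intensity-$1$ Poisson law $\nu$ under $P\mapsto(P,\Psi(P))$, and this map is a measure isomorphism onto its image with inverse the coordinate projection $(P,F)\mapsto P$.

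The key step is to show that $\Psi$ is shift-equivariant, $\Psi(\tau_yP)=\tau_y\Psi(P)$ for each fixed $y$, $\nu$-almost surely (this in particular re-proves that $\tilde\mu$, hence $\mu$, is horizontal-shift invariant). For the finite approximations this is where Lemma~\ref{lem:slitcomm} enters: the atoms of $\tau_yP$ in the strip $\{|x|\le n\}$ are exactly $\{(t,x-y):(t,x)\in A,\ x\in[y-n,y+n]\}$, so applying Lemma~\ref{lem:slitcomm} to the corresponding composition of slit maps yields
\[
G^{(n)}_t[\tau_yP](z)=G^{[y-n,\,y+n]}_t[P](z+y)-y=\big(\tau_y\,G^{[y-n,\,y+n]}[P]\big)_t(z),
\]
where $G^{[a,b]}$ denotes the analogue of $G^{(n)}$ built from the atoms with $x\in[a,b]$. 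Letting $n\to\infty$, the left-hand side converges (in mean square on compacts, since $\tau_yP$ is again an intensity-$1$ Poisson process) to $\Psi(\tau_yP)$, so it remains to see that $G^{[y-n,\,y+n]}[P]$ also converges to $\Psi(P)$; equivalently, that the limiting map is unchanged if the symmetric truncation window $[-n,n]$ is replaced by the boundedly-shifted window $[y-n,y+n]$. Since both windows contain $[-(n-|y|),\,n-|y|]$ and the slit maps produced by the symmetric differences are supported in $\{|x|>n-|y|\}$, running the far-field plus Gronwall estimate of the proof of Lemma~\ref{lemma:sumg} (now with \eqref{eq:n2sx}, \eqref{eq:n1sx} and \eqref{eq:sxmonoton} applied with $n$ replaced by $n-|y|$) gives $\ev\big[\sup_{s\le t}|G^{[y-n,\,y+n]}_s(z)-G^{(n-|y|)}_s(z)|^2\big]\to0$, and likewise $\ev\big[\sup_{s\le t}|G^{(n)}_s(z)-G^{(n-|y|)}_s(z)|^2\big]\to0$; hence $G^{[y-n,\,y+n]}[P]$ and $G^{(n)}[P]$ have the common mean-square limit $\Psi(P)$, establishing equivariance.

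It then remains to invoke that the Poisson point process $(\mathcal P,\nu)$ is mixing, hence ergodic, under the group of horizontal translations: for events $A,B$ depending only on the restriction of $P$ to a bounded spatial window, $\nu(A\cap\tau_y^{-1}B)=\nu(A)\nu(B)$ as soon as $|y|$ is large enough that the windows are disjoint, by independence of the Poisson measure over disjoint sets, and the general case follows by a routine $L^2$-approximation; mixing forces the $\{\tau_y\}$-invariant $\sigma$-algebra to be $\nu$-trivial. Transporting this through the equivariant isomorphism $P\mapsto(P,\Psi(P))$, every $\{\theta_y\}$-invariant event has $\tilde\mu$-measure $0$ or $1$, so backward SHL$(0)$, and therefore SHL$(0)$, is ergodic with respect to horizontal shifts. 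The main obstacle is the equivariance of $\Psi$, and within it the need to reconcile the symmetric truncation used in the canonical construction with the off-centre window produced by a shift; this is resolved by re-using the far-field/Gronwall estimates behind Lemma~\ref{lemma:sumg}. A secondary point to handle carefully is upgrading the mean-square convergence in Theorem~\ref{thm:constworks} to a genuine measurable dependence of $\tilde F$ on $\tilde P$, done by extracting an almost surely convergent subsequence.
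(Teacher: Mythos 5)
Your proposal is correct and follows essentially the same route as the paper: realise backward SHL$(0)$ as a factor of the ergodic (indeed mixing) Poisson point process, prove shift-equivariance of the factor map via Lemma \ref{lem:slitcomm}, and reconcile the off-centre truncation window $[y-n,y+n]$ with the symmetric one by re-running the estimates behind Theorem \ref{thm:constworks}/Lemma \ref{lemma:sumg}. Your write-up is somewhat more explicit than the paper's (measurability of the factor map via an a.s.\ convergent subsequence, the mixing argument for the Poisson process, and the precise far-field/Gronwall bookkeeping), but the underlying argument is the same.
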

\begin{proof}
We prove the statement by showing that the backward SHL$(0)$ is a factor of the ergodic space of intensity 1 Poisson point process on $[0,\infty)\times \BR$. We need to show that a real translation commutes with the map that takes a Poisson point process and returns a backward SHL$(0)$. Let $y\in\BR$ and define two sequences of processes. The first is a shift of $G_t^{(n)}(z)$ defined in \eqref{eq:slitcompn}, $S_t^{(n)}(z)=G_t^{(n)}(z-y)+y$. The second $R_t^{(n)}(z)$ is the same as in \eqref{eq:slitcompn} but using the shifted set of points $\{\tilde{A}+y\}\cap\{(t,x):|x|\le n\}$. By Lemma \ref{lem:slitcomm} $S_t^{(n)}$ can be represented as a composition of slit functions (as in \eqref{eq:slitcompn}) for the set $\{\tilde{A}+y\}\cap\{(t,x):-n+y\le x\le n+y\}$. These two sets of points are equal inside the centered interval $[-|n+y|,|n+y|]\cap[-|y-n|,|y-n|]$. By the same calculations that lead to the proof of Theorem \ref{thm:constworks} we get that $S_t^{(n)}(z)$ and $R_t^{(n)}(z)$ converge a.s. to the same limit and that the limit is a backward SHL$(0)$ with respect to $\tilde{A}+y$. Since  $S_t^{(n)}(z)$ converges to a shift of a backward SHL$(0)$ with respect to $\tilde{A}$ by the translation $y$, we obtain that the translation commutes with the mapping from Poisson process to backward SHL$(0)$. %\note{should we repeat claclations for this argument}
\end{proof}
$~$

% ********************************************************************************************************************************
\section{Typical height and particle size}
\label{sec:size}

In this section we establish the geometric property of the SHL$(0)$ cluster which motivate the assertion that SHL$(0)$ is a potential candidate for a stationary off-lattice variant of DLA grown from a line, namely that particles sizes are tight.

The first lemma states that a typical point on the SHL$(0)$ interface at time $t$ is at height $\pi t/2$ i.e.~on average the interface grows at a linear speed, and that other than this linear shift the map does not distort the upper half plane too much away from the boundary.
 
\begin{lemma}\label{lem:prelimto_all_powerful}
Suppose $\tilde \mu$ is a backward $\mathrm{SHL}(0)$ process. Then
\[
\ev \left [ \tilde F_t(z) \right ]= z+i \pi t/2
\]
and
\[
\ev \left [ \tilde F'_t(z) \right ]= 1.
\]

Furthermore, There exists some absolute constant $C>0$ such that, for each $z \in \HH$ and $t \in [0,\infty)$, 
\begin{equation}\label{eq:concval}
\ev \left [ \sup_{s \leq t} \left | \tilde F_s(z) - (z+i \pi s/2) \right |^2\right ] \leq \frac{C t}{1+\Im z},
\end{equation}
and
\begin{equation}\label{eq:concder}
\ev \left [ \sup_{s \leq t} \left | \tilde F'_s(z) - 1 \right |^2\right ] \leq \frac{C (1+|\log (\Im z)|) t}{1+(\Im z)^3} \exp \left ( \frac{C (1+|\log (\Im z)|) t}{1+(\Im z)^3} \right ) < \infty,
\end{equation}
where expectation is with respect to $\tilde \mu$.

%\note{These results (in particular the last one) are very weak. There ought to be a way to exploit the fact that $\Im(\tilde F_t(z))$ grows linearly with $t$ but I haven't been able to make it work.}

\end{lemma}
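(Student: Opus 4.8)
The plan is to run, in each of the two coordinates $\tilde F_t(z)$ and $\tilde F'_t(z)$, the canonical semimartingale decomposition of the Poisson-driven dynamics into a mean-zero martingale and a compensator, working with the approximations $G^{(n)}$ of Section \ref{construction:amanda} and then passing to the limit $n\to\infty$. For the first coordinate, write $G^{(n)}_t(z)-z = M^{(n)}_t + D^{(n)}_t$, where $M^{(n)}$ is the mean-zero martingale part and $D^{(n)}_t = \int_0^t \Phi_n(G^{(n)}_s(z))\,ds$ with $\Phi_n(w) := \int_{-n}^n(\SM_x(w)-w)\,dx$. The pointwise statement $\Phi_n(w)\to i\pi/2$ from \eqref{eq:sxdrift}, upgraded to locally uniform convergence on $\HH$ via \eqref{eq:n1sx}--\eqref{eq:n2sx} (the same cut-off argument used in the proof of Lemma \ref{lemma:sumg}), together with the fact that $G^{(n_k)}_\cdot(z)$ converges to $\tilde F_\cdot(z)$ uniformly on $[0,t]$ along a subsequence while staying in a fixed compact subset of $\HH$ (Lemma \ref{lemma:egbound} and Theorem \ref{thm:constworks}), gives $D^{(n_k)}_t \to i\pi t/2$ a.s.\ by bounded convergence. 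Hence $M^{(n_k)}_t \to M_t := \tilde F_t(z)-(z+i\pi t/2)$ a.s.; since $\Im G^{(n)}_s(z)\ge\Im z$, the bound \eqref{eq:l2sx} controls the quadratic variation, $\ev|M^{(n)}_t|^2 = \ev\int_0^t\!\int_{-n}^n |\SM_x(G^{(n)}_s(z)) - G^{(n)}_s(z)|^2\,dx\,ds \le Ct/(1+\Im z)$ uniformly in $n$, so the $M^{(n)}$ are uniformly $L^2$-bounded martingales, $M$ is a martingale with $M_0=0$, and $\ev|M_t|^2 \le Ct/(1+\Im z)$. Taking expectations gives $\ev[\tilde F_t(z)] = z+i\pi t/2$, and Doob's $L^2$ maximal inequality gives $\ev[\sup_{s\le t}|M_s|^2]\le 4\ev|M_t|^2$, which is \eqref{eq:concval}.

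For the derivative, differentiate the growth condition \eqref{eq:back_around_middle} in $z$ (legitimate because $G^{(n)}\to\tilde F$ locally uniformly, cf.\ Lemma \ref{lemma:sumg}): the jump of $\tilde F'_t(z)$ at a Poisson point $(s,x)$ is $\tilde F'_{s-}(z)\,[\SM_x'(\tilde F_{s-}(z))-1]$, so the compensator of $\tilde F'_t(z)-1$ is $\int_0^t \tilde F'_s(z)\,\Psi_n(\tilde F_s(z))\,ds$ with $\Psi_n(w) = \int_{-n}^n(\SM_x'(w)-1)\,dx = \Phi_n'(w)$. Since $\Phi_n$ converges locally uniformly to the \emph{constant} $i\pi/2$, its derivative $\Psi_n\to 0$ locally uniformly, while $|\Psi_n(w)|\le C/\Im z$ for $\Im w\ge\Im z$ by \eqref{eq:l1sxprime}; dominated convergence (using $\int_0^t\ev|\tilde F'_s(z)|\,ds<\infty$) shows the compensator vanishes in the limit. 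Thus $\tilde M_t := \tilde F'_t(z)-1$ is the a.s.\ limit of uniformly $L^2$-bounded martingales, hence itself a martingale with $\tilde M_0=0$, giving $\ev[\tilde F'_t(z)]=1$. (Here one uses a priori that $\rho(t):=\ev[\sup_{s\le t}|\tilde F'_s(z)-1|^2]<\infty$, which follows from Lemma \ref{lemma:sumg} together with the elementary bound $\ev[\sup_{s\le t}|G^{(1)'}_s(z)|^2]<\infty$ --- $G^{(1)'}$ being a product of Poisson-many factors $\SM_x'$ that are uniformly bounded on $\{\Im w\ge\Im z\}$.) For \eqref{eq:concder}, the chain rule and \eqref{eq:sxmonoton} --- whose right-hand side is decreasing in $\Im w$, hence $\le \kappa(z) := C\,(1+|\log\Im z|\,\mathbbm{1}_{\{\Im z<1\}})/(1+(\Im z)^3)$ whenever $\Im w\ge\Im z$ --- give
\[
\ev|\tilde M_t|^2 \;=\; \ev\!\int_0^t |\tilde F'_s(z)|^2 \Big(\int_{-\infty}^{\infty} |\SM_x'(\tilde F_s(z)) - 1|^2\,dx\Big)\,ds \;\le\; \kappa(z)\int_0^t \ev|\tilde F'_s(z)|^2\,ds .
\]
Combining with Doob and $\ev|\tilde F'_s(z)|^2\le 2\rho(s)+2$ yields $\rho(t)\le 8\kappa(z)\int_0^t(\rho(s)+1)\,ds$, and Gronwall's inequality then gives $\rho(t)\le 8\kappa(z)\,t\,e^{8\kappa(z)t}$, which is \eqref{eq:concder} after adjusting constants.

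The main obstacle is exactly the one flagged just before Lemma \ref{lemma:slit_estimates}: the drift integrals $\int_{-n}^n(\SM_x(w)-w)\,dx$ are only conditionally convergent, so every passage to the limit $n\to\infty$ must be handled carefully. Concretely one needs (i) to promote the pointwise limit \eqref{eq:sxdrift} to uniform convergence of $\Phi_n$ on compact subsets of $\HH$, which is where \eqref{eq:n1sx}, \eqref{eq:n2sx} and the cut-off trick from the proof of Lemma \ref{lemma:sumg} enter; and (ii) to justify interchanging $\lim_n$ with $\ev[\cdot]$ and with $\int_0^t\,ds$, for which the a priori second-moment bound of Lemma \ref{lemma:egbound} (and its derivative counterpart obtained via Lemma \ref{lemma:sumg}) is used both to confine the trajectory to a compact set and, via Chebyshev, to show that the event $\{|\Re G^{(n)}_s(z)|$ large$\}$ --- where only the weaker $O(\log n)$ bound \eqref{eq:n0sx} is available --- contributes negligibly. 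Once these limit interchanges are secured, the rest is just Doob's inequality, the estimates of Lemma \ref{lemma:slit_estimates}(i)--(ii), and Gronwall.
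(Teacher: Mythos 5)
Your proposal is correct and follows essentially the same route as the paper: the semimartingale decomposition of $\tilde F_t(z)-z$ (resp. $\tilde F'_t(z)-1$) into a mean-zero martingale plus the compensator $i\pi t/2$ (resp. $0$), followed by the $L^2$ bounds of Lemma \ref{lemma:slit_estimates}, Doob's inequality, and Gronwall. The only difference is that you spell out the limit interchanges for the conditionally convergent drift via the $G^{(n)}$ approximations, which the paper's proof takes for granted.
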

\begin{proof}
Fix $z \in \HH$. Then, using Lemma \ref{lemma:slit_estimates},
\begin{align*}
\tilde F_t(z) 
&= z+ M_t(z) + \lim_{n \to \infty} \int_{0}^t \int_{-n}^n  \left[ \tilde{s}_x(\tilde F_{s}(z)) - \tilde F_{s}(z) \right] dx ds \\
&= z + M_t(z) + i\pi t/2  
\end{align*}
where $\left ( M_t(z) \right )$ is a zero-mean martingale with
\begin{align*}
\ev \left [ |M_t(z)|^2 \right ] 
&= \ev \left [ \int_0^t \int_{-\infty}^\infty \left| \tilde{s}_x(\tilde F_{s}(z)) - \tilde F_{s}(z) \right|^2 dx ds \right ] \\
&\leq \ev \left [ \int_0^t \frac{C}{1+\Im \tilde F_{s}(z)} ds \right ] \\
&\leq \frac{C t}{1+\Im z}.
\end{align*}
Here $C$ is the absolute constant from Lemma \ref{lemma:slit_estimates}. The result follows by Doob's inequality.

Similarly
\[
\tilde F'_t(z) = 1+M'_t(z)
\]
where $\ev \left [ M'_t(z) \right ]=0$ and
\begin{align*}
\ev \left [ |M'_t(z)|^2 \right ] 
&= \ev \left [ \int_0^t |\tilde F'_s(z)|^2 \int_{-\infty}^\infty \left| \tilde{s}'_x(\tilde F_{s}(z)) - 1 \right|^2 dx ds \right ] \\
&\leq \ev \left [ \int_0^t \frac{C |\tilde F'_s(z)|^2(1+|\log (\Im \tilde F_{s}(z))|1_{(\Im \tilde F_{s}(z) < 1)})}{1+(\Im \tilde F_{s}(z))^3} ds \right ] \\
&\leq \frac{2C (1+|\log (\Im z)|) t}{1+(\Im z)^3} + \frac{2C (1+|\log (\Im z)|)}{1+(\Im z)^3} \int_0^t \ev \left [ \sup_{r \leq s} \left | \tilde F'_r(z) - 1 \right |^2\right ] ds.
\end{align*}
The result follows by Doob's inequality and Gronwall's Lemma.
\end{proof}
\begin{corollary}\label{cor:condbound}
Let $Z$ be independent of $\tilde{F}_1(z)$ satisfying $\Im Z>h>0$, then $$\ev[|\tilde{F}_1'(Z)|]\le 1+\frac{C\log h}{1+h^3}.$$
\end{corollary}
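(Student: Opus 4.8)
The plan is to condition on the value of $Z$ and apply the second estimate of Lemma \ref{lem:prelimto_all_powerful} pointwise. Writing out the conditional expectation, since $Z$ is independent of the backward process generating $\tilde F_1$, we have
\[
\ev\left[|\tilde F_1'(Z)|\right] = \ev\left[\,\ev\left[|\tilde F_1'(z)|\right]\Big|_{z=Z}\,\right],
\]
so it suffices to show that $\ev[|\tilde F_1'(z)|] \le 1 + C\log(\Im z)/(1+(\Im z)^3)$ for every fixed $z$ with $\Im z > h$, and then use the monotonicity of the right-hand side in $\Im z$ (for $\Im z$ bounded away from $0$) to replace $\Im z$ by the uniform lower bound $h$.

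The key step is the pointwise bound. First I would write $\tilde F_1'(z) = 1 + M_1'(z)$ with $\ev[M_1'(z)] = 0$ exactly as in the proof of Lemma \ref{lem:prelimto_all_powerful}, so that by Jensen's inequality
\[
\ev\left[|\tilde F_1'(z)|\right] \le 1 + \ev\left[|M_1'(z)|\right] \le 1 + \ev\left[|M_1'(z)|^2\right]^{1/2}.
\]
Then I would invoke \eqref{eq:concder} at time $t=1$, which gives $\ev[|M_1'(z)|^2]^{1/2} = \ev[|\tilde F_1'(z)-1|^2]^{1/2} \le C'(1+|\log(\Im z)|)^{1/2}(1+(\Im z)^3)^{-1/2}\exp(\dots)^{1/2}$; for $\Im z \ge h$ with $h$ fixed this whole quantity is bounded by an expression of the stated form $C\log(\Im z)/(1+(\Im z)^3)$ after absorbing constants (noting that for large $\Im z$ the $(1+(\Im z)^3)^{-1/2}$ decay dominates and the exponential factor tends to $1$, while for $\Im z$ near $h$ everything is a bounded constant).

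The main obstacle — really the only thing requiring care — is matching the precise shape of the bound claimed in the statement: the estimate \eqref{eq:concder} carries a square root, a logarithm, a cubic denominator, and an exponential correction, whereas the corollary states a clean bound $C\log h/(1+h^3)$. Reconciling these amounts to checking that, uniformly over $\Im z \ge h$, the function $\Im z \mapsto (1+|\log \Im z|)^{1/2}(1+(\Im z)^3)^{-1/2}\exp(C(1+|\log\Im z|)(1+(\Im z)^3)^{-1})$ is dominated by a constant multiple of $\log(\Im z)/(1+(\Im z)^3)$ (equivalently, that after taking square roots the weaker polynomial-log decay on the left is absorbed); since both sides are continuous and the left side decays at least as fast as any fixed power less than $3/2$, this is a routine one-variable estimate, and the value $h$ enters only through monotonicity of the resulting bound. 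I would also remark that the statement as written is slightly informal about whether $C\log h$ should be $C|\log h|$ or $C(1+|\log h|)$; the argument gives the latter, which is what is needed in the sequel.
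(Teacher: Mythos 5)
Your reduction to a pointwise bound via conditioning on $Z$ (using independence) is fine, but the key quantitative step fails. You bound $\ev[|\tilde F_1'(z)|]\le 1+\ev[|M_1'(z)|]\le 1+\ev[|M_1'(z)|^2]^{1/2}$ and then claim it is "routine" that the square root of the right-hand side of \eqref{eq:concder} is dominated by $C\log(\Im z)/(1+(\Im z)^3)$. That claim is false: taking the square root of \eqref{eq:concder} yields a quantity of order $(\Im z)^{-3/2}\sqrt{\log \Im z}$ as $\Im z\to\infty$, which is much \emph{larger} than $(\Im z)^{-3}\log(\Im z)$. So your argument only proves $\ev[|\tilde F_1'(Z)|]\le 1+C\sqrt{\log h}\,h^{-3/2}$, not the stated bound. (This weaker bound would in fact still suffice for the way the corollary is used in Theorem \ref{thm:derbndproc}, since $\prod_m(1+C\sqrt{\log m}\,m^{-3/2})$ converges, but it does not prove the corollary as stated.)

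The missing idea is to exploit the martingale identity $\ev[\tilde F_1'(z)]=1$ from Lemma \ref{lem:prelimto_all_powerful} \emph{before} taking absolute values, so that no square root is lost. Writing $W=\tilde F_1'(z)$, one has
\begin{equation*}
\ev\left[|W|^2\right]=\ev\left[|(W-1)+1|^2\right]=1+2\,\Re\big(\ev[W]-1\big)+\ev\left[|W-1|^2\right]=1+\ev\left[|W-1|^2\right],
\end{equation*}
since the cross term vanishes. Hence by Cauchy--Schwarz and $\sqrt{1+x}\le 1+x/2$,
\begin{equation*}
\ev\left[|W|\right]\le\left(1+\ev\left[|W-1|^2\right]\right)^{1/2}\le 1+\tfrac12\,\ev\left[|W-1|^2\right],
\end{equation*}
and now \eqref{eq:concder} with $t=1$ enters \emph{without} a square root: for $\Im z\ge h$ (with $h$ bounded away from $0$, which is the regime in which the corollary is applied) the exponential factor is bounded and the prefactor is decreasing, giving $\ev[|\tilde F_1'(Z)|]\le 1+C(1+\log h)/(1+h^3)$ as claimed. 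Your closing remark that the statement should read $C(1+|\log h|)$ rather than $C\log h$ is correct, but the constant-tracking is not the issue; the order of the Jensen/orthogonality steps is.
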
%\label{cor:function_lln}
The second corollary that we will need is a law of large numbers behavior for $\Im \tilde{F}_t(0)$. Abbreviate the event $\mathfs{G}_T=\{\forall t>T, \Im \tilde{F}_t(0)>t/2\}.$
\begin{corollary}\label{cor:unfboundderivative}
 For every $\epsilon>0$ there is a $t_0>0$ such that $\prob(\mathfs{G}_{t_0})>1-\epsilon$.
\end{corollary}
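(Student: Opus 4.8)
The plan is to exploit that, by Lemma \ref{lem:prelimto_all_powerful}, the trajectory of $0$ has deterministic upward drift speed $\pi/2$, which strictly exceeds the threshold slope $1/2$, while \eqref{eq:concval} controls the fluctuations in $L^2$ by $O(\sqrt{t})$; a union bound over dyadic time blocks then upgrades the pointwise-in-$t$ estimate to the uniform statement defining $\mathfs{G}_{t_0}$.

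Concretely, I would first record that \eqref{eq:concval} applied at $z=0$ gives $\ev[\sup_{s\le t}|\tilde F_s(0)-i\pi s/2|^2]\le Ct$ (here $\tilde F_t(0)$ is understood as the boundary value $\lim_{\eta\downarrow 0}\tilde F_t(i\eta)$, which exists since each slit map $\tilde s_x$ extends continuously to $\overline\HH$, and the bound passes to the limit by Fatou). Since $\pi/2-1>0$, fix $\delta:=(\pi/2-1)/2$. On a dyadic block $t\in(2^k,2^{k+1}]$ one has $t/2\le 2^k$ and $\pi t/2\ge \pi 2^{k-1}$, so on the event $\{\sup_{s\le 2^{k+1}}|\tilde F_s(0)-i\pi s/2|\le \delta 2^k\}$ we get, for every such $t$,
\[
\Im \tilde F_t(0)\ \ge\ \tfrac{\pi t}{2}-\delta 2^k\ \ge\ 2^k\big(\tfrac{\pi}{2}-\delta\big)\ >\ 2^k\ \ge\ \tfrac t2 .
\]
Hence, by Chebyshev's inequality together with the moment bound at $t=2^{k+1}$,
\[
\prob\Big(\exists\, t\in(2^k,2^{k+1}]:\ \Im\tilde F_t(0)\le \tfrac t2\Big)\ \le\ \prob\Big(\sup_{s\le 2^{k+1}}\big|\tilde F_s(0)-i\pi s/2\big|>\delta 2^k\Big)\ \le\ \frac{C\,2^{k+1}}{\delta^2\,2^{2k}}\ =\ \frac{2C}{\delta^2}\,2^{-k}.
\]

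Summing this over $k\ge K$, and using that the blocks $(2^k,2^{k+1}]$, $k\ge K$, cover $(2^K,\infty)$, gives $\prob(\mathfs{G}_{2^K}^{\,c})\le \tfrac{4C}{\delta^2}2^{-K}\to 0$ as $K\to\infty$; since $\mathfs{G}_{t_0}$ is increasing in $t_0$, for any $\epsilon>0$ it then suffices to pick $t_0=2^K$ with $\tfrac{4C}{\delta^2}2^{-K}<\epsilon$. (Equivalently, since the bounds are summable in $k$, Borel--Cantelli shows that almost surely $\Im\tilde F_t(0)>t/2$ for all sufficiently large $t$, i.e.\ $\prob(\bigcup_{t_0}\mathfs{G}_{t_0})=1$.) The argument is essentially routine Chebyshev-plus-union-bound over scales; the only genuinely delicate point is justifying \eqref{eq:concval} at the boundary value $\tilde F_t(0)$, and the one structural input that makes everything work is the strict inequality $\pi/2>1/2$ coming from the slit drift \eqref{eq:sxdrift}.
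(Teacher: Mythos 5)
Your proof is correct and follows essentially the same route as the paper: Chebyshev applied to the $L^2$ estimate of Lemma \ref{lem:prelimto_all_powerful} at $z=0$, a union bound over a sparse sequence of times, and an interpolation step to cover all large $t$. The only (minor) difference is in the interpolation: you invoke the maximal form of \eqref{eq:concval} over dyadic blocks, whereas the paper uses the pointwise bound at times $t_n=n^2$ together with the monotonicity of $t\mapsto\Im\tilde F_t(0)$; both work, and your explicit remark about justifying the boundary value at $z=0$ is a point the paper glosses over.
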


\begin{proof} Let $t_n=n^2$. By Lemma \ref{lem:prelimto_all_powerful},
\begin{align}
\prob\left(\Im \tilde{F}_{t_n}(0)<\frac{t_n}{1.9}\right)\le \prob\left(\left|\tilde{F}_{t_n}(0)-\frac{i\pi t_n}{2}\right|^2>\frac{t_n^2}{4}\right)<\frac{C}{t_n}=\frac{C}{n^2}.
\end{align}
By Borel-Cantelli with probability $1$ there is some $N\in\BN$ such that for all $n>N$, $\Im \tilde{F}_{t_n}(0)>\frac{t_n}{1.9}$. Now for every $(n-1)^2<t<n^2$ with $n>N$, using the monotonicity in time of the imaginary part of the process we obtain
$$
\Im \tilde{F}_t(0)\ge\Im \tilde{F}_{(n-1)^2}(0) >\frac{(n-1)^2}{1.9}=\frac{n^2}{1.9}\frac{(n-1)^2}{n^2}>\frac{t}{2}
,$$
where the last inequality holds for large enough $n$. Finally the Lemma follows since $\lim_{t\rightarrow\infty}\prob(\mathfs{G}_t)=\prob(\exists t_0\forall t>t_0~\Im \tilde{F}_t(0)>t/2)=1$.
\end{proof}

\ignore{
Next we prove tightness for the particles size. 
We start with Theorem \ref{thm:derbndproc} which governs the derivative of the mapping on the boundary.
\begin{theorem}\label{thm:derbndproc}
For every $\epsilon>0$ there is a $M>0$ such that for all $t>0$ $$\prob\left(|\tilde{F}'_t(0)|>M\right)<\epsilon.$$
\end{theorem}
\begin{proof}
Let $\epsilon>0$. By Corollary \ref{cor:unfboundderivative} there is a $t_0>0$ such that $\prob(\mathfs{G}_{t_0})>1-\epsilon$.
Define a sequence of i.i.d functions $\{G_i(z)\}_{i\ge1}$ distributed like $\tilde{F}_1(z)$. Then by the Markov property (Lemma \ref{prop:markov}) the following equalities holds in distribution $\forall n>t_0$,
\begin{equation}
\tilde{F}_n(z)=G_n\circ G_{n-1}\circ\cdots\circ G_{1}(z) =G_n\circ G_{n-1}\circ\cdots\circ G_{t_0+1}\circ\tilde{F}_{t_0}(z) .
\end{equation}
Using the chain rule we can write:
\bae
\tilde{F}'_n(0)=&G_n'(G_{n-1}\circ G_{n-2}\circ\cdots\circ G_1(0))\cdot G_{n-1}'(G_{n-2}\circ\cdots\circ G_1(0))\cdots\\
\cdot &G_{n-2}'(G_{n-3}\circ\cdots\circ G_1(0))\cdots G_{t_0+1}'(\tilde{F}_{t_0}(0))\cdot\tilde{F}_{t_0}'(0)
\eae
Now if we condition on the event $\mathfs{G}_{t_0}$ by an inductive use of the law of total expectations and Corollary \ref{cor:condbound} we get
\begin{equation}
\ev\left[|G_n'(G_{n-1}\circ G_{n-2}\circ\cdots\circ G_1(0))|\cdots |{G}_{t_0+1}'(\tilde{F}_{t_0}(0))|\bigg|\mathfs{G}_{t_0}\right]\le\prod_{m=t_0}^n\left(1+\frac{C\log m}{m^3}\right),
\end{equation}
which converges to $1$ as $t_0\rightarrow\infty$ and $n\ge t_0$.
The last estimate we use which is the reason we only prove tightness and not present moment bounds is that for every $t_0,\epsilon>0$ there is a $\tilde{M}$ such that $\prob(|\tilde{F}'_{t_0}(0)|>\tilde{M})<\epsilon$. This follows from the fact that the process is well defined. Now abreviate $\tilde{\mathfs{G}}_{t_0}={\mathfs{G}}_{t_0}\cap\{\tilde{F}'_{t_0}(0)|<\tilde{M}\}$.
Since
\bae
\prob(|\tilde{F}_n'(0)|<M)\ge\prob(|\tilde{F}_n'(0)|<M|\tilde{\mathfs{G}}_{t_0})\prob(\tilde{\mathfs{G}}_{t_0})
\eae
we can given $\epsilon$ choose and $M,\tilde{M}>0$ and $t_0\in\BN$ such that both elements in the RHS are larger than $(1-\epsilon)$ yielding $\prob(|\tilde{F}_n'(0)|>M)<2\epsilon$.

\end{proof}

We now continue to implement Theorem \ref{thm:derbndproc} in order to control the actual size of the particle. Fix $t>0$ and
let $\hat P$ be the Poisson process $P$ with an extra point at $(t,0)$. Let $\hat F$ be the SHL process driven by $\hat P$. We define the particle arriving at time $t$ as the difference between the ranges of $\hat F_t$ and $\hat F_{t-}$, and denote it by $\particle_t$.
%\begin{theorem}\label{thm:prtszbnd} 
%For every $\epsilon>0$ there exists $M$ such that for every $t$,
%\[
%P[\diam(\particle_t) > M] < \epsilon.
%\]
%\end{theorem}
\note{Theorem \ref{thm:prtszbnd} is in Intro now}
%\ref{eq:concval} \ref{eq:concder}

\begin{proof}[Proof of Theorem \ref{thm:prtszbnd}]
Note that $\particle_t = \hat F_{t-} \big((0,i]\big)$ and therefore 
\[
\diam(\particle_t) \leq \int_0^1 |\hat{F}_{t-}'(ix)|dx\le \sup_{0\le x\le 1}|\hat{F}_{t-}'(ix)|
\]
Now from the well definability of the process we have that $\forall t_0>0,\forall\epsilon>0,\exists M>0$ such that $\prob(\sup_{0\le x\le 1}|\hat{F}_{t_0}'(ix)|>M)<\epsilon$. By the continuity of the process on $\mathbb{R}\cup\BH$ we have that 
$$\lim_{\eta\rightarrow 0}\{\forall t>t_0, \inf_{0\le s\le \eta}\Im \hat{F}_t(is)>t/2\}=\{\forall t>t_0, \Im \hat{F}_t(0)>t/2\}=\mathfs{G}_{t_0},$$ 
and thus for every $\epsilon>0$ there is an $\eta>0$ small enough such that $$\prob\left(\forall t>t_0, \inf_{0\le s\le \eta}\Im \hat{F}_t(is)>t/2\Big|\mathfs{G}_{t_0}\right)>1-\epsilon.$$
$$
$$

The tightness of the integral $\int_0^\eta |\hat{F}_{t-}'(ix)|dx$ follows from Theorem \ref{thm:derbndproc} by noting that for any $m>t_0$
$$
\ev\left[\sup_{0\le x\le\eta}|G_m'(G_{m-1}\circ G_{m-2}\circ\cdots\circ G_1(ix))|\Big|\forall t>t_0, \inf_{0\le s\le \eta}\Im \hat{F}_t(is)>t/2\right]\le 1+\frac{C\log m}{m^3}
,$$ by the law of total expectations together with the uniform bound in Corollary \ref{cor:unfboundderivative}. 
For the rest of the interval, we can repeat $1/\eta$ times the same distortion theorem argument explained in the proof of Theorem \ref{thm:constworks}. We get that
$$
\sup_{\eta\le s\le 1}||\hat{F}_{t-}'(is)|\le 16^{1/\eta}|\hat{F}_{t-}'(i\eta)|
.$$
Proving the tightness of $\diam(\particle_t)$.
\end{proof}

} %end ignore

Next we prove tightness for the particles size. 
We start with a simple lemma that states that the behaviour of the process is not too bad on the boundary. % which we \note{prove in the appendix? leave unproved? leave as an exercise for Jacob?}

\begin{lemma}\label{lem:bound_diff}
For every $x \in \BR$ and $t>0$, almost surely the mapping $F_t$ is differentiable in $x$.
\end{lemma}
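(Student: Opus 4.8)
\emph{Proof strategy.} The plan is first to reduce to the backward process and then to a statement about Lebesgue‑almost every boundary point. Since ``$F_t$ is differentiable at $x$'' can be expressed through difference quotients of $F_t$ evaluated at countably many points, it is a measurable event; as $F_t$ and $\tilde F_t$ have the same law at the fixed time $t$, it suffices to show that a.s.\ $\tilde F_t$ is differentiable at $x$. Moreover, horizontal translation invariance of (backward) $\mathrm{SHL}(0)$ — established in the proof of Proposition~\ref{prop:ergodic} — makes $\prob\big(\tilde F_t\text{ not differentiable at }y\big)$ independent of $y\in\BR$, hence equal to its average $\ev\big[\mathrm{Leb}\{y\in[0,1]:\tilde F_t\text{ not differentiable at }y\}\big]$; so I would instead aim to show that a.s.\ $\tilde F_t$ is differentiable at Lebesgue‑a.e.\ point of $\BR$.

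Next I would analyse the finite‑level approximations $G^{(n)}_t=\SM_{x_{k}}\circ\cdots\circ\SM_{x_1}$ of \eqref{eq:slitcompn}. The key observation is that $\SM_x(z)=x+\sqrt{(z-x)^2-1}$ extends holomorphically to a full complex neighbourhood of every point of $\overline{\HH}$ except the two branch points $x\pm1$: away from those points either $(z-x)^2-1\notin[0,\infty)$ (so $\sqrt{\cdot}$ is holomorphic there) or $\SM_x$ maps a real neighbourhood into $\BR$ and one reflects. Therefore $G^{(n)}_t$ is holomorphic on a disc $B(x,\rho_n)$ with $\rho_n>0$ provided the orbit $y_0=x$, $y_j=\SM_{x_j}(y_{j-1})$ never meets a branch point $x_j\pm1$. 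This last event has full probability: $y_{j-1}$ depends only on $x$ and on $x_1,\dots,x_{j-1}$, while conditionally on those marks (and on the number and times of the Poisson points) the mark $x_j$ has a continuous law, so the orbit hits $x_j\pm1$ with conditional probability $0$; summing over the countably many pairs $(n,j)$ shows that a.s.\ every $G^{(n)}_t$ is holomorphic, hence differentiable, across a neighbourhood of $x$.

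The genuinely delicate step, which I expect to be the main obstacle, is to transfer this to the limit $\tilde F_t=\lim_n G^{(n)}_t$ — the approximations are not nested compatibly with composition, since new slits are interleaved in time. One route is geometric: show $\tilde F_t$ extends continuously to $\overline{\HH}$ and that $\overline{\HH}\setminus\tilde F_t(\HH)$ is a union of at most countably many analytic slit arcs whose only singular boundary points are the countably many tips, bases and branching vertices; then $\tilde F_t$ fails to be differentiable only at the countably many boundary prime ends over these points, a Lebesgue‑null set, which by the first paragraph finishes the proof. An alternative, more hands‑on route establishes $\liminf_n\rho_n>0$ a.s.\ — equivalently, that only finitely many slits ever come within a bounded distance of the boundedly moving trajectory of the real point $x$, uniformly in $n$ — and then concludes by Vitali's theorem that $\tilde F_t$ is holomorphic on a fixed disc about $x$, using Theorem~\ref{thm:constworks} for convergence on its intersection with $\HH$. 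In both routes the real work is that the displacements caused by distant slits are only conditionally summable, so one must re‑run the estimates of Lemmas~\ref{lemma:slit_estimates} and~\ref{lemma:sumg} while following the trajectory of $x$, invoking Corollary~\ref{cor:unfboundderivative} to ensure that, once that trajectory has imaginary part of order one, those derivative estimates take effect. Everything else is routine.
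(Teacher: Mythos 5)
You have correctly reduced to the backward process and correctly observed that each finite approximation $G^{(n)}_t$ is a.s.\ holomorphic on \emph{some} neighbourhood $B(x,\rho_n)$, but the step you yourself flag as ``the genuinely delicate step'' is in fact the entire content of the lemma, and neither of your two suggested routes is carried out. Route (a) (continuity up to the boundary plus a prime-end/analytic-arc structure of the complement) presupposes regularity of the limit map that is essentially equivalent to what is being proved; route (b) asks for $\liminf_n\rho_n>0$ almost surely, which is stronger than what is needed and is not what one should try to prove. The proposal therefore has a genuine gap: no mechanism is given for obtaining a domain of analyticity that does not shrink with $n$.

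The missing idea is a quantitative, fixed-domain estimate that holds \emph{uniformly in $n$ with probability close to $1$} rather than almost surely. Concretely: fix $\delta$ small, set $\zeta=100\delta^{1/2}$, and track not just the orbit $u_k=V_k(0)$ of the point but the image $I_k=V_k([-\delta,\delta])$ of a whole real interval. Conditioned on the slit bases staying at distance $\geq 1+2\zeta$ from the orbit (a stopping time $T$), the expected length of $I_k$ stays of order $\delta$, because the average derivative of $\tilde s$ over $\{|x-u|\geq 1+\zeta\}$ is $1+O(1/n)$. One then splits into: $T>N$ (never close, analyticity on the fixed domain $\HH\cup B_\delta(0)$ is automatic); $T\leq N$ but the branch points $x_T,x_T\pm1$ miss the $2\zeta$-enlargement of $I_{T-1}$ (analyticity survives, since $\tilde s_{x_T}$ then maps the neighbourhood holomorphically into $\HH$ and all later slit maps preserve analyticity); and the bad event, whose probability is bounded by $6\zeta+C(t)\delta=:\psi(\delta)$ using the length bound on $I_{T-1}$. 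This gives $\prob\bigl(G^{(n)}_t\text{ analytic on }\HH\cup B_\delta(0)\bigr)\geq 1-\psi(\delta)$ for every $n$, hence a subsequence analytic on the \emph{same} disc with probability $\geq 1-\psi(\delta)$, and $L^2$ convergence plus normal families gives differentiability of $\tilde F_t$ at $0$ with probability $\geq 1-\psi(\delta)$; letting $\delta\to0$ finishes. Your observation that the orbit a.s.\ avoids branch points gives only $\rho_n>0$ with no uniformity, and your translation-invariance reduction to Lebesgue-a.e.\ differentiability, while valid, does nothing to close this gap.
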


We provide a proof of this lemma in the appendix. Here we give a very rough sketch as to why this is true.
%At this point we do not supply a complete proof of Lemma \ref{lem:bound_diff}. 

\begin{proof}[Sketch of the proof of Lemma \ref{lem:bound_diff}]
Since $F_t$ and $\tilde F_t$ have the same distribution, it suffices to prove the statement for $\tilde F_t$. Then,
\[
\tilde F_t (x) = \lim_{n \to \infty} G_n (x) 
\]
where
$
G_n (x) 
$
is the composition of $\mbox{Pois}(tn)$ slit functions, distributed evenly in the interval $[-n,n]$. Thus $G_n$ has roughly $3 \cdot \mbox{Pois}(tn)$ points of non-differentiability,
essentially distributed evenly in the interval $[-n,n]$. Thus in every interval $[a,b]$, the number of points of non-differentiability of $G_n$ in $[a,b]$ is tight in $n$, and therefore in the limit we get finitely many points of non-differentiability of $\tilde F_t (x)$ in every interval.
\end{proof}

We now state and prove a theorem %\ref{thm:derbndproc}
which governs the derivative of the mapping on the boundary.
\begin{theorem}\label{thm:derbndproc}
For every $\epsilon>0$ there is an $M>0$ such that for all $t>0$ $$\prob\left(|\tilde{F}'_t(0)|>M\right)<\epsilon.$$
\end{theorem}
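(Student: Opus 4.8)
The plan is to use the Markov property of the backward process (Proposition~\ref{prop:markov}) to express $\tilde F_t$ as a composition of independent increments, and then to estimate the boundary derivative through the chain rule: the increments acting far from $\BR$ will be controlled by Corollary~\ref{cor:condbound}, while the single increment acting near the boundary is controlled only qualitatively, using that the process is well defined. Given $\epsilon>0$, first fix $t_0\in\BN$ with $\prob(\mathfs{G}_{t_0}^{c})<\epsilon$ (Corollary~\ref{cor:unfboundderivative}) and then $\tilde M<\infty$ with $\prob(|\tilde F'_{t_0+1}(0)|>\tilde M)<\epsilon$ (possible since $\tilde F'_{t_0+1}(0)$ is a.s.\ finite), and put $E=\mathfs{G}_{t_0}\cap\{|\tilde F'_{t_0+1}(0)|\le\tilde M\}$, so $\prob(E^{c})<2\epsilon$. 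For $t>t_0+1$ set $k=\lceil t\rceil$ and, iterating Proposition~\ref{prop:markov}, take (in distribution) $\tilde F_t=H_k\circ H_{k-1}\circ\cdots\circ H_{t_0+2}\circ\tilde F_{t_0+1}$, where $H_{t_0+2},\dots,H_{k-1}$ are i.i.d.\ copies of $\tilde F_1$, $H_k$ is a copy of $\tilde F_{t-k+1}$, and all of them are independent of $\tilde F_{t_0+1}$. Writing $W_j:=\tilde F_j(0)$, the chain rule gives
\[
\tilde F'_t(0)=\tilde F'_{t_0+1}(0)\prod_{j=t_0+2}^{k}H'_j(W_{j-1}).
\]

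On $\mathfs{G}_{t_0}$ one has $\Im W_{j-1}>(j-1)/2$ for every $j\ge t_0+2$, so on $E$,
\[
|\tilde F'_t(0)|\ \le\ \tilde M\prod_{j=t_0+2}^{k}|H'_j(W_{j-1})|\,\mathbbm{1}_{\{\Im W_{j-1}>(j-1)/2\}}.
\]
The point of this bound is that each indicator is measurable with respect to $\mathcal F_{j-1}:=\sigma(\tilde F_{t_0+1},H_{t_0+2},\dots,H_{j-1})$, while $H_j$ is independent of $\mathcal F_{j-1}$. Hence, conditioning successively on $\mathcal F_{k-1},\mathcal F_{k-2},\dots$ and invoking Corollary~\ref{cor:condbound} with $h=(j-1)/2$ (together with a crude bound $\ev[|\tilde F'_r(w)|]\le C_1$ for $\Im w\ge1/2$, $r\le1$, furnished by \eqref{eq:concder}, for the fractional outermost factor $H_k$), one removes the factors one at a time and obtains
\[
\ev\!\left[\,|\tilde F'_t(0)|\,\mathbbm{1}_E\,\right]\ \le\ C_1\tilde M\prod_{m=t_0}^{\infty}\!\left(1+\frac{C\log(m/2)}{1+(m/2)^{3}}\right)\ =:\ C_1\tilde M\,K(t_0)\ <\ \infty,
\]
a bound that does not depend on $t$, since $\sum_m m^{-3}\log m<\infty$. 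Markov's inequality then gives $\prob(|\tilde F'_t(0)|>M)\le 2\epsilon+C_1\tilde M\,K(t_0)/M$, so choosing $M$ large settles all $t>t_0+1$; for $t$ in the compact interval $(0,t_0+1]$, tightness of $|\tilde F'_t(0)|$ (uniformly in $t$) follows from well-definedness and the local boundedness in time of the process.

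The hard part is making the peeling legitimate: the good event $\mathfs{G}_{t_0}$ is not measurable with respect to any finite collection of increments --- it constrains $\Im\tilde F_t(0)$ for \emph{all} $t>t_0$ --- so one cannot simply condition on it and treat the $H_j$ as independent. The remedy is to dominate $\mathbbm{1}_{\mathfs{G}_{t_0}}$ by the product of the finite-horizon indicators $\mathbbm{1}_{\{\Im W_{j-1}>(j-1)/2\}}$ and to peel the composition from the outermost factor inwards, at each step using only the independence of the next increment from its strict past. A secondary difficulty is that the estimates of Lemma~\ref{lem:prelimto_all_powerful} degenerate as $\Im z\downarrow 0$, which is precisely why the one factor acting on the boundary, $\tilde F'_{t_0+1}(0)$, has to be absorbed qualitatively, whereas every subsequent factor acts at height $\gtrsim j$ and is therefore controlled quantitatively by Corollary~\ref{cor:condbound}.
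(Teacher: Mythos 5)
Your proof is correct and follows essentially the same route as the paper's: Markov decomposition into i.i.d.\ increments, chain rule, control of each factor at height $\gtrsim j$ via Corollary~\ref{cor:condbound} on the event furnished by Corollary~\ref{cor:unfboundderivative}, and qualitative absorption of the boundary factor $\tilde F'_{t_0+1}(0)$. The only (minor, and arguably cleaner) variation is bookkeeping: you dominate $\mathbbm{1}_{\mathfs{G}_{t_0}}$ by the product of past-measurable finite-horizon indicators and peel from the outside in, whereas the paper conditions on the nested events $\mathfs{G}_{t_0}^{n}$ and pays the telescoping factors $\alpha_n^{-1}$ whose product is $\prob(\mathfs{G}_{t_0})^{-1}$; you also treat non-integer $t$ and small $t$ explicitly, which the paper leaves implicit.
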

\begin{proof}
Let $\epsilon>0$. By Corollary \ref{cor:unfboundderivative} there is a $t_0>0$ such that $\prob(\mathfs{G}_{t_0})>1-\epsilon$.
Define a sequence of i.i.d functions $\{G_i(z)\}_{i\ge1}$ distributed like $\tilde{F}_1(z)$. Then by the Markov property (Lemma \ref{prop:markov}) the following equalities hold in distribution $\forall n>t_0$,
\begin{equation}
\tilde{F}_n(z)=G_n\circ G_{n-1}\circ\cdots\circ G_{1}(z) =G_n\circ G_{n-1}\circ\cdots\circ G_{t_0+1}\circ\tilde{F}_{t_0}(z) .
\end{equation}
Using the chain rule we can write:
\bae
\tilde{F}'_n(0)=&G_n'(G_{n-1}\circ G_{n-2}\circ\cdots\circ G_1(0))\cdot G_{n-1}'(G_{n-2}\circ\cdots\circ G_1(0))\cdots\\
\cdot &G_{n-2}'(G_{n-3}\circ\cdots\circ G_1(0))\cdots G_{t_0+1}'(\tilde{F}_{t_0}(0))\cdot\tilde{F}_{t_0}'(0)
\eae

We now show how to use induction on $n$ to show that for every $n>t_0$,
\begin{equation}\label{eq:evsfivesix}
\ev\Bigg[\Big|G_n'(G_{n-1}\circ G_{n-2}\circ\cdots\circ G_1(0))\Big|\cdots \Big|{G}_{t_0+1}'(\tilde{F}_{t_0}(0))\Big|\Bigg|\mathfs{G}_{t_0}\Bigg]
\le \prob(\mathfs{G}_{t_0})^{-1}\prod_{m=t_0}^n\left(1+\frac{C\log m}{m^3}\right).
\end{equation}

Write $\mathfs{G}_{t_0}^{n} = \{\forall{t_0\leq t \leq n}, \Im \tilde F_t(0) > t/2 \}$. 
Also write $\alpha_n = \prob(\mathfs{G}_{t_0}^n|\mathfs{G}_{t_0}^{n-1})$ and note that 
\[
\prod \alpha_n = \prob(\mathfs{G}_{t_0}).
\]
Let $U_n = |G_n'(G_{n-1}\circ G_{n-2}\circ\cdots\circ G_1(0))|$. Then we want to control the conditional expectation of $U_{t_0}\cdot\ldots\cdot U_n$. Remember that $G_n$ is independent of $\mathfs{G}_{t_0}^{n-1}$.
Now, by Lemma \ref{lem:prelimto_all_powerful}, a.s. 
\[
\ev\left[U_n | \mathfs{G}_{t_0}^{n-1},U_1,\ldots,U_{n-1}\right] < \left(1+\frac{C\log n}{n^3}\right),
\]
and therefore,
\begin{eqnarray*}
\ev \left[
U_{t_0}\cdot U_{t_0 + 1} \cdots U_{n} | \mathfs{G}_{t_0}^{n} \right]
&\leq&  \alpha_n^{-1} \ev \left[ U_{t_0}\cdot U_{t_0 + 1} \cdots U_{n} | \mathfs{G}_{t_0}^{n-1} \right] \\
&\leq&  \alpha_n^{-1} \ev \left[ U_{t_0}\cdot U_{t_0 + 1} \cdots U_{n-1} | \mathfs{G}_{t_0}^{n-1} \right] \cdot \left(1+\frac{C\log n}{n^3}\right).
\end{eqnarray*}

Iterating the last inequality we get \eqref{eq:evsfivesix}.

%For every $n > t_0$ we define 
%\[
%\mathfs{G}_{t_0}^n=\{\forall t>T, \Im \tilde{F}_t(0)>t/2\}.
%\]

%Now if we condition on the event $\mathfs{G}_{t_0}$ by an inductive use of the law of total expectations and Corollary \ref{cor:condbound} we get
%\begin{equation}
%\ev\left[|G_n'(G_{n-1}\circ G_{n-2}\circ\cdots\circ G_1(0))|\cdots |{G}_{t_0+1}'(\tilde{F}_{t_0}(0))|\bigg|\mathfs{G}_{t_0}\right]\le\prod_{m=t_0}^n\left(1+\frac{C\log m}%{m^3}\right),
% \end{equation}

Note that $\prob(\mathfs{G}_{t_0})$ converges to $1$ as $t_0\rightarrow\infty$ and $n\ge t_0$.
The last estimate we use 
%which is the reason we only prove tightness and not present moment bounds
is that for every $t_0,\epsilon>0$ there is a $\tilde{M}$ such that $\prob(|\tilde{F}'_{t_0}(0)|>\tilde{M})<\epsilon$. This follows immediately from Lemma \ref{lem:bound_diff}.
%\note{needs a reference to Noam's proof and should be stated in this paper}. This follows from the fact that the process is well defined. Now abreviate $\tilde{\mathfs{G}}_{t_0}={\mathfs{G}}_{t_0}\cap\{\tilde{F}'_{t_0}(0)|<\tilde{M}\}$.
Since
\bae
\prob(|\tilde{F}_n'(0)|<M)\ge\prob(|\tilde{F}_n'(0)|<M|\tilde{\mathfs{G}}_{t_0})\prob(\tilde{\mathfs{G}}_{t_0}),
\eae
 given $\epsilon$, we can choose and $M,\tilde{M}>0$ and $t_0\in\BN$ such that both elements in the RHS are larger than $(1-\epsilon)$ yielding $\prob(|\tilde{F}_n'(0)|>M)<2\epsilon$.

\end{proof}

We now continue to implement Theorem \ref{thm:derbndproc} in order to control the actual size of the particle. Fix $t>0$ and
let $\hat P$ be the Poisson process $P$ with an extra point at $(t,0)$. Let $\hat F$ be the SHL process driven by $\hat P$. We define the particle arriving at time $t$ as the difference between the ranges of $\hat F_t$ and $\hat F_{t-}$, and denote it by $\particle_t$
\begin{theorem}\label{thm:prtszbnd} 
For every $\epsilon>0$ there exists $M$ such that for every $t$,
\[
\prob[\diam(\particle_t) > M] < \epsilon.
\]
\end{theorem}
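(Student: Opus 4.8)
The plan is to bound the diameter of the particle by the boundary derivative of the cluster map and then recycle the machinery developed for Theorem~\ref{thm:derbndproc}. Since the extra Poisson point sits at $(t,0)$, the particle arriving at time $t$ is the image of the slit $(0,i]$ under $\hat F_{t-}$, so $\particle_t=\hat F_{t-}\big((0,i]\big)$ and hence
\[
\diam(\particle_t)\le\int_0^1\big|\hat F'_{t-}(ix)\big|\,dx\le\sup_{0\le x\le1}\big|\hat F'_{t-}(ix)\big|.
\]
At the fixed deterministic time $t$ there is $P$-a.s. no atom of the underlying process, so $\hat F_{t-}$ has the same law as $F_t$, which by the time-reversal correspondence of Section~\ref{sec:reverse} has the same law as $\tilde F_t$. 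Thus it suffices to prove that $\sup_{0\le x\le1}|\tilde F'_t(ix)|$ is tight \emph{uniformly in $t$}.

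First I would handle the regime where $x$ is close to $0$. Fix $\epsilon>0$ and use Corollary~\ref{cor:unfboundderivative} to choose $t_0$ with $\prob(\mathfs{G}_{t_0})>1-\epsilon$. By continuity of the process up to $\R$, the events $\{\forall t>t_0,\ \inf_{0\le s\le\eta}\Im\tilde F_t(is)>t/2\}$ increase to $\mathfs{G}_{t_0}$ as $\eta\downarrow0$, so I can fix $\eta>0$ for which this event has conditional probability $>1-\epsilon$ given $\mathfs{G}_{t_0}$. On that event, decompose $\tilde F_t$ after time $t_0$ as a composition of i.i.d.\ copies of $\tilde F_1$ via the Markov property (Proposition~\ref{prop:markov}), so that by the chain rule $\tilde F'_t(ix)$ is a product of derivatives of these maps, each evaluated at a point of imaginary part $>m/2$ for $m=t_0,t_0+1,\dots$. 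An inductive application of the law of total expectation together with the uniform bound of Corollary~\ref{cor:condbound}, exactly as in the proof of Theorem~\ref{thm:derbndproc}, bounds the conditional expectation of $\sup_{0\le x\le\eta}$ of this product by $\prob(\mathfs{G}_{t_0})^{-1}\prod_{m\ge t_0}(1+C\log m/m^3)$, which is finite and tends to $1$ as $t_0\to\infty$. Combined with tightness of $\sup_{0\le x\le\eta}|\tilde F'_{t_0}(ix)|$ — which follows from well-definedness of the process and Lemma~\ref{lem:bound_diff} — a union bound over the finitely many bad events gives tightness of $\sup_{0\le x\le\eta}|\tilde F'_t(ix)|$ uniformly in $t$.

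For the complementary regime $x\in[\eta,1]$ I would apply the Koebe distortion argument used in the proof of Theorem~\ref{thm:constworks} a bounded number of times ($\lceil1/\eta\rceil$ steps covering the segment $[i\eta,i]$ by balls of comparable radius), yielding $\sup_{\eta\le x\le1}|\tilde F'_t(ix)|\le16^{\lceil1/\eta\rceil}\,|\tilde F'_t(i\eta)|$; tightness of the right-hand side is Theorem~\ref{thm:derbndproc} (or the previous paragraph evaluated at $x=\eta$). Putting the two regimes together and choosing $M$ large enough proves the theorem.

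The main obstacle is that only tightness, not moment bounds, is available for the boundary derivative, so every estimate must be packaged as a ``good event of probability $>1-\epsilon$'' plus a uniform-in-$t$ bound on the remaining quantity. The delicate point is controlling the derivative uniformly along the \emph{entire} segment $(0,i]$ down to the boundary point: this forces one to peel off the first $t_0$ units of time, where only the qualitative regularity of Lemma~\ref{lem:bound_diff} is available, and to use continuity of $\tilde F_t$ up to $\R$ in order to trade the event $\mathfs{G}_{t_0}$ (a statement about the single point $0$) for the stronger event that $\Im\tilde F_s(is')>s/2$ holds uniformly over $s'\in[0,\eta]$.
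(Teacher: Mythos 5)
Your proposal is correct and follows essentially the same route as the paper's proof: the same bound $\diam(\particle_t)\le\sup_{0\le x\le1}|\hat F'_{t-}(ix)|$, the same splitting of the slit into the near-boundary piece $[0,\eta]$ handled by the machinery of Theorem \ref{thm:derbndproc} on the event $\mathfs{G}_{t_0}$, and the same Koebe distortion bound $16^{1/\eta}|\hat F'_{t-}(i\eta)|$ for the remaining segment. The only (harmless) addition is your explicit remark that $\hat F_{t-}$ has the law of $\tilde F_t$ at the fixed time $t$, which the paper leaves implicit.
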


%\ref{eq:concval} \ref{eq:concder}

\begin{proof}
Note that $\particle_t = \hat F_{t-} \big((0,i]\big)$ and therefore 
\[
\diam(\particle_t) \leq \int_0^1 |\hat{F}_{t-}'(ix)|dx\le \sup_{0\le x\le 1}|\hat{F}_{t-}'(ix)|.
\]
Now from the well definability of the process we have that $\forall t_0>0,\forall\epsilon>0,\exists M>0$ such that $\prob(\sup_{0\le x\le 1}|\hat{F}_{t_0}'(ix)|>M)<\epsilon$. By the continuity of the process on $\mathbb{R}\cup\BH$ we have that 
$$\lim_{\eta\rightarrow 0}\{\forall t>t_0, \inf_{0\le s\le \eta}\Im \hat{F}_t(is)>t/2\}=\{\forall t>t_0, \Im \hat{F}_t(0)>t/2\}=\mathfs{G}_{t_0},$$ 
and thus for every $\epsilon>0$ there is an $\eta>0$ small enough such that $$\prob\left(\forall t>t_0, \inf_{0\le s\le \eta}\Im \hat{F}_t(is)>t/2\Big|\mathfs{G}_{t_0}\right)>1-\epsilon.$$
$$
$$

The tightness of the integral $\int_0^\eta |\hat{F}_{t-}'(ix)|dx$ follows from Theorem \ref{thm:derbndproc} by noting that for any $m>t_0$
$$
\ev\left[\sup_{0\le x\le\eta}|G_m'(G_{m-1}\circ G_{m-2}\circ\cdots\circ G_1(ix))|\Big|\forall t>t_0, \inf_{0\le s\le \eta}\Im \hat{F}_t(is)>t/2\right]\le 1+\frac{C\log m}{m^3}
,$$ by the law of total expectations together with the uniform bound in Corollary \ref{cor:unfboundderivative}. 
For the rest of the interval, we can repeat $1/\eta$ times the same distortion theorem argument explained in the proof of Theorem \ref{thm:constworks}. We get that
$$
\sup_{\eta\le s\le 1}||\hat{F}_{t-}'(is)|\le 16^{1/\eta}|\hat{F}_{t-}'(i\eta)|
.$$
Proving the tightness of $\diam(\particle_t)$.
\end{proof}

%We now wish to estimate the average height of the process and the size of a particle attached in the average height. Since we are working in the stationary case the average height of a point in which a particle attaches is given by the half-plane capacity. 
%\begin{definition}
%Let $B_t$ be complex Brownian motion, and define $\tau=\inf\{t>0:B_t\in \tilde{F}_t(\partial\BH)\}$. Define the half-plane capacity 
%$$
%{\rm hcap}(\tilde{F}_t(\partial\BH))=\lim_{y\rightarrow\infty}\ev^{iy}[\ima(B_\tau)]
%.$$
%\end{definition}
%By the proof of Lemma \ref{lem:prelimto_all_powerful} Since $F_t(z) = z + M_t(z) + i\pi t/2$ and $M_t(z)$ is zero mean martingale which decays to $0$ as $z\rightarrow\infty$ we obtain
%\begin{corollary}
%$$
%{\rm hcap}(\tilde{F}_t(\partial\BH))=\pi t/2
%.$$
%\end{corollary}
%
%We can now state the main result about particle sizes, mainly the average size of a particle attaching at a typical height is bounded.
%\begin{corollary}
%There is a constant $c>0$ such that for all $t>0$
%$$
%\ev\left[|\tilde{F}_t'(i\cdot{\rm hcap}(\tilde{F}_t(\partial\BH)))\right]\le c
%.$$
%\end{corollary}
%\begin{proof}
%Follows by the bound in Lemma \ref{lem:prelimto_all_powerful} evaluated at $z=i\pi t/2$.
%\end{proof}

% ***************************************************************************************************************
% ********************************** Section IV No Infinite Trees ***********************************************
% ***************************************************************************************************************
$~$

\section{{SHL$(0)$ trees are finite}}\label{sec:finite_trees}

%Theorem \ref{thm:main} constructs the translation invariant Hastings-Levitov for a fixed time $t$. 
We now turn to consider geometric properties of the SHL's evolution in time. 
First we consider the harmonic measure. By the conformal structure of the process, the rate at which particles are attached to trees emanating from an interval on $\dd\HH$ is proportional to the interval's length under the inverse map $F_t^{-1}(z)$.  Define the harmonic measure at time $t>0$ of an interval $I=(a,b)\subset\partial\BH$ to be $$H_I(t):=F_t^{-1}(b)-F_t^{-1}(a)=\|F_t^{-1}(I)\|,$$
where $\|\cdot\|$ denotes the length of an interval.

We start with a result interesting by itself holding only for the SHL$(\alpha)$, $\alpha=0$ case. 
\begin{theorem}\label{thm:martingale}
Let $I\subset\partial\BH$ be an interval. Then $\{H_I(t)\}_{t\ge 0}$ is a positive martingale.
\end{theorem}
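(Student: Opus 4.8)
The plan is to realise $\{H_I(t)\}$ as a pure-jump process driven by the Poisson process $P$, to compute its jumps explicitly, and to verify that the associated compensator vanishes. Fix $I=(a,b)\subset\partial\BH$ with $a<b$. Each $F_t$ is a conformal, hence orientation-preserving, map of $\BH$ onto $\BH\setminus K_t$, $K_t:=\BH\setminus F_t(\BH)$, that fixes $\infty$ (being a composition of hydrodynamically normalised slit maps), so it extends continuously and monotonically to $\partial\BH$ away from the feet $K_t\cap\partial\BH\subseteq\{F_{s-}(x):(s,x)\in P,\ s\le t\}$. By the Mecke formula, $\ev\big[\#\{(s,x)\in P:\ F_{s-}(x)=a,\ s\le t\}\big]=\int_0^t\!\int_{\BR}\prob\big(F_{s-}(x)=a\big)\,dx\,ds=0$, since a non-constant holomorphic $F_{s-}$ attains the value $a$ on a Lebesgue-null set; hence a.s.\ $a,b\notin K_t\cap\partial\BH$ for every $t$, so $F_t^{-1}(a)<F_t^{-1}(b)$ are well defined and $H_I(t)>0$. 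By the growth condition in Definition~\ref{def:SHL}, $F$ is constant between atoms of $P$, while at an atom $(s,x)$ one has $F_s=F_{s-}\circ\tilde s_x$, hence on the boundary $F_s^{-1}(w)=\tilde s_x^{-1}\big(F_{s-}^{-1}(w)\big)$. Using the explicit boundary value $\tilde s_x^{-1}(w)=x+\operatorname{sign}(w-x)\sqrt{(w-x)^2+1}$ (legitimate since, by the same Mecke computation, a.s.\ $F_{s-}^{-1}(a),F_{s-}^{-1}(b)\ne x$ at every atom) and writing $a''=F_{s-}^{-1}(a)$, $b''=F_{s-}^{-1}(b)$, the jump of $H_I$ at $(s,x)$ is
\[
H_I(s)-H_I(s-)=\psi(b''-x)-\psi(a''-x),\qquad \psi(u):=\operatorname{sign}(u)\sqrt{u^2+1}-u ,
\]
where, crucially, $\psi$ is independent of $x$ and equals the boundary shift $\tilde s_x^{-1}(x+u)-(x+u)$ induced by one inverse slit map.

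Thus $H_I(t)=H_I(0)+\int_{(0,t]\times\BR}\big[\psi(F_{s-}^{-1}(b)-x)-\psi(F_{s-}^{-1}(a)-x)\big]\,P(ds,dx)$, and two facts then close the argument. First, for fixed $s$ this integrand, as a function of $x$, is bounded by $2$ (as $|\psi|\le1$) and is $O(|x|^{-2})$ for $|x|\to\infty$ (as $\psi'(u)=O(u^{-2})$), so it lies in $L^1(\BR,dx)\cap L^2(\BR,dx)$ with $\int_{\BR}|\,\cdot\,|^2\,dx\le C$ for an absolute constant $C$ — the boundary counterpart of the estimates in Lemma~\ref{lemma:slit_estimates}. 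Second, the compensator vanishes:
\[
\int_{\BR}\big[\psi(b''-x)-\psi(a''-x)\big]\,dx=0 .
\]
For the latter I would use that $\psi$ is odd, so $\int_{-R}^{R}[\psi(b''-x)-\psi(a''-x)]\,dx$ is the difference of two ``slivers'' of width $b''-a''$ located near $\pm R$, each of size $O\big((b''-a'')\,|\psi(\pm R)|\big)\to0$; since the integrand is absolutely integrable, this principal value equals the integral. (Equivalently: in contrast to $\tilde s_x$ acting on $\BH$, which carries drift $i\pi/2$ by \eqref{eq:sxdrift}, the inverse slit map $\tilde s_x^{-1}$ has zero net drift on $\partial\BH$.) Granting these, $H_I(t)-H_I(0)$ equals the compensated Poisson integral of the jump function, hence is an $(\CF_{0,t})$-martingale — using that $F_t$, and so $H_I(t)$, is $\CF_{0,t}$-measurable by the adaptedness axiom and that $P|_{(s,t]\times\BR}$ is independent of $\CF_{0,s}$ — with $\ev\big[(H_I(t)-H_I(0))^2\big]=\ev\big[\int_0^t\!\int_{\BR}(\text{jump fn})^2\,dx\,ds\big]\le Ct$; together with positivity this is the assertion. (Alternatively, once $H_I$ is identified as a nonnegative local martingale it is a supermartingale, and one upgrades to a martingale by checking $\ev[H_I(t)]=b-a$ for all $t$, which follows from the translation invariance and mirror symmetry of SHL$(0)$ plus integrability of $F_t^{-1}(0)$.)

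The main obstacle is making the pure-jump representation rigorous: $P$ has infinitely many atoms in every strip $[0,t]\times\BR$, so $H_I$ has infinitely many (small) jumps, and one must justify both the convergence of the defining sum and the cadlag regularity. I would proceed exactly as in Section~\ref{construction:amanda}, running the truncated processes that retain only atoms with $|x|\le n$: for each $n$ the analogue $H_I^{(n)}$ is a finite-activity pure-jump process with the decomposition above but compensator $\int_{|x|\le n}(\text{jump fn})\,dx$, which by the vanishing of the full compensator differs from $0$ only by the tail $\int_{|x|>n}(\text{jump fn})\,dx$, controlled by the same estimates that bound the error terms in Lemmas~\ref{lemma:egbound}--\ref{lemma:sumg} and tending to $0$. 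Passing to the limit using the $L^2$-convergence of Theorem~\ref{thm:constworks} and the boundary regularity behind Lemma~\ref{lem:bound_diff} (to obtain convergence of the inverse maps evaluated at the fixed points $a,b$), $H_I$ appears as an $L^2$-limit of martingales with vanishing drift, and is therefore a martingale.
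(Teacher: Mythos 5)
Your proof is correct in outline, but it takes a genuinely different route from the paper's. The paper's proof of Theorem \ref{thm:martingale} is ``soft'': it reduces, via adaptedness and the Markov property, to showing that $\ev[H_{I}(1)]=\|I\|$, and establishes this by tiling $[-L,L]$ with translates of $I$, applying Birkhoff's ergodic theorem (Proposition \ref{prop:ergodic}) to the increments $T_{I_i}$, and observing that the telescoping sum $\sum_{i}T_{I_i}$ is controlled by the displacement of the two endpoints $\pm L$, which has bounded expectation by Lemma \ref{lem:prelimto_all_powerful}; hence $\ev[T_{I_1}]=0$. You instead compute the jump of $H_I$ at an atom $(s,x)$ explicitly as $\psi(b''-x)-\psi(a''-x)$ with $\psi(u)=\operatorname{sign}(u)\sqrt{u^2+1}-u$, and kill the compensator using the oddness and $O(1/|u|)$ decay of $\psi$. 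This is essentially the mechanism the paper itself uses later, in the proof of Theorem \ref{thm:harmbm}, where $F_t^{-1}(a)$ for $a\in\BR$ is decomposed into a martingale plus a drift term that vanishes because $\tilde s_x^{-1}$ preserves $\BR$ and has zero net boundary drift (in contrast to \eqref{eq:sxdrift}); your theorem then follows by differencing at $a$ and $b$. What your approach buys is quantitative information the ergodic argument does not give: the explicit quadratic variation bound $\ev[(H_I(t)-H_I(0))^2]\le Ct$ and a direct path to the Brownian scaling limit. What it costs is that you must justify the pure-jump representation of $F_t^{-1}$ on $\partial\BH$ (equivalently, the a.s.\ convergence of the sum of jumps and the absence of a continuous part), since the growth condition in Definition \ref{def:SHL} is stated for $F$, not $F^{-1}$. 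You correctly identify this as the main obstacle and your truncation plan via the $G^{(n)}$ of Section \ref{construction:amanda} is the right one; note that here the task is easier than for $F_t$ itself, because your jump function decays like $|x|^{-2}$, so the sum over atoms converges absolutely. Modulo writing out that truncation limit carefully, your argument is complete and at the same level of rigour as the paper's own treatment of Theorem \ref{thm:harmbm}.
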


\begin{proof}
By the adaptedness property (\ref{item:adapt}) in the definition of the process and the Markov property in Proposition \ref{prop:markov}, it is enough to show that the harmonic measure of an interval does not change in expectation for some arbitrary time say $t=1$. Denote by $I_i=[i,i+1)$ the unit intervals and $H_{I_i}(1)=H_{I_i}(0)+T_{I_i}=1+T_{I_i}$. By the ergodicity of Proposition \ref{prop:ergodic} and Birkhoff's point-wise ergodic theorem
$$
\lim_{L\rightarrow\infty}\frac{1}{2L}\sum_{i=-L}^LT_{I_i}=\ev[T_{I_1}]
, \text{ a.s.}$$
Since the interval $[-L,L]$ is sent to an interval under $F_1^{-1}$, we need only to bound  the shift under $F_1^{-1}$ of the boundary points of the interval i.e. $$\sum_{i=-L}^LT_{I_i}\le |F_1^{-1}(L)-L|+|F_1^{-1}(-L)+L|.$$ By placing $z=F_s^{-1}(x)$ in Lemma \ref{lem:prelimto_all_powerful}, there is an $M<\infty$, $\forall x\in \BR,~\ev[|F_1^{-1}(x)-x|]=M$. Thus we get that $\frac{1}{2L}\sum_{i=-L}^LT_{I_i}$ converges in probability to $0$ yielding $\ev[T_{I_1}]=0$. 
\end{proof}

The following theorem shows stabilization of SHL$(0)$. %\note{only after the stabilization we can define the process at time infinity, show it is a monotonic process}
\begin{theorem}\label{thm:finitetrees}
For every interval $I\subset\partial\HH$, $\lim_{t\rightarrow \infty}H_I(t)=0$ a.s.
\end{theorem}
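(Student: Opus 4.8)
The plan is to combine the martingale property of $H_I$ with a quantitative lower bound on the probability that the harmonic measure drops by a definite amount over one unit of time, and then to conclude with the conditional Borel--Cantelli lemma.

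By Theorem~\ref{thm:martingale} the process $(H_I(t))_{t\ge0}$ is a non-negative martingale, so by the martingale convergence theorem $H_I(\infty):=\lim_{t\to\infty}H_I(t)$ exists and is finite almost surely; it remains to show $H_I(\infty)=0$ a.s. The core of the argument will be the following estimate: for all $0<\epsilon<M<\infty$ there exist $c_0=c_0(\epsilon)>0$ and $p=p(\epsilon,M)>0$ such that, for every integer $s\ge0$,
\[
\prob\!\left(\,H_I(s+1)<H_I(s)-c_0 \,\middle|\, \CF_{0,s}\right)\ge p
\qquad\text{a.s. on }\{H_I(s)\in[\epsilon,M]\}.
\]
To obtain this I would use the adaptedness property~\eqref{item:adapt} together with time-stationarity: the map $G:=F_s^{-1}\circ F_{s+1}$ is independent of $\CF_{0,s}$ and distributed as $F_1$, while $J:=F_s^{-1}(I)$ is a $\CF_{0,s}$-measurable interval of length $H_I(s)$. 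Since $F_{s+1}^{-1}(I)=G^{-1}(J)$, the conditional law of $H_I(s+1)$ given $\CF_{0,s}$ is that of $|G^{-1}(J)|$, and by translation invariance this depends only on $h:=H_I(s)$. So it suffices to prove that $\psi(h):=\prob\big(H_{(0,h)}(1)<h-c_0\big)$ is positive for every $h\in[\epsilon,M]$ (here $H_{(0,h)}$ is the harmonic measure of $(0,h)$ in a fresh SHL$(0)$) and that $\psi$ is lower semicontinuous, so that $\inf_{[\epsilon,M]}\psi>0$ by compactness; lower semicontinuity holds because $\{|G^{-1}((0,h))|<h-c_0\}$ is an open condition and $h\mapsto|G^{-1}((0,h))|$ is continuous for each fixed realization of the driving Poisson points.

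For the positivity of $\psi(h)$ I would exhibit a positive-probability configuration of the Poisson points in $(0,1]\times\BR$ that drive $G$: one slit, arriving early, whose preimage interval $[\xi-1,\xi+1]$ straddles the left endpoint $0$ of $(0,h)$ (i.e.\ $\xi\in(-1,0)$), together with a large ``clear zone'' $\{|x|\le N_0(M)\}$ that, apart from a small window around $\xi$, contains no further points. Using the explicit form $\SM_\xi^{-1}(w)=\xi+\mathrm{sgn}(w-\xi)\sqrt{(w-\xi)^2+1}$ from Definition~\ref{def:slit_map}, such a slit sends $(0,h)$ onto an interval of length $\sqrt{(h-\xi)^2+1}-\sqrt{\xi^2+1}\le h-c(\epsilon)$ --- the left endpoint is pushed to the right by a bounded-below amount while the right endpoint moves less, and when $h$ is small both endpoints move similarly so that the length contracts by a factor bounded away from $1$. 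The remaining, far-away slits can only shrink the image interval further, since a slit whose preimage interval is disjoint from the current interval strictly shortens its preimage-length. Making this last point rigorous --- i.e.\ controlling the cumulative displacement caused by the far-away slits, whose individual contributions are \emph{not} absolutely summable --- is exactly what the slit estimates of Lemma~\ref{lemma:slit_estimates} are for, and this is the main obstacle.

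To finish, fix rationals $0<\epsilon<M$ and set $A_s=\{H_I(s)\in[\epsilon,M]\}$ and $B_s=\{H_I(s+1)<H_I(s)-c_0\}$ for integers $s$, so that $B_s\in\CF_{0,s+1}$ and, by the estimate above, $\prob(B_s\mid\CF_{0,s})\ge p\,\mathbbm{1}_{A_s}$. On the event $\{H_I(\infty)\in(\epsilon,M)\}$ one has, for all large $s$, that $H_I(s)\in[\epsilon,M]$ (so $\prob(B_s\mid\CF_{0,s})\ge p$) and that $|H_I(s)-H_I(\infty)|<c_0/2$; applying the latter at $s$ and at $s+1$ shows that $B_s$ cannot occur (a drop of $H_I$ by $c_0$ would carry it outside $(H_I(\infty)-c_0/2,\,H_I(\infty)+c_0/2)$). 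Hence on $\{H_I(\infty)\in(\epsilon,M)\}$ only finitely many $B_s$ occur, whereas $\sum_s\prob(B_s\mid\CF_{0,s})=\infty$ there; by the conditional (L\'evy) Borel--Cantelli lemma this event is null. Taking the union over all rational pairs $0<\epsilon<M$ and recalling that $H_I(\infty)<\infty$ a.s., we conclude $H_I(\infty)=0$ a.s.
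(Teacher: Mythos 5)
Your strategy is sound in outline and genuinely different from the paper's, but it is both harder than necessary and left incomplete at the step you yourself flag. The paper runs the same elementary computation on $\SM_x^{-1}$ in the opposite direction: if $H_I(t)\to C>0$, then $\|F_{t^-}^{-1}(I)\|$ stays in $[C-\epsilon,2C]$ eventually, so Poisson points keep landing \emph{inside} $F_{t^-}^{-1}(I)$ at rate at least $C-\epsilon$, i.e.\ infinitely often; and each such arrival makes the preimage length jump \emph{up} by at least $\delta(C)=\sqrt{(2C)^2+1}-2C>0$, because $\SM_x^{-1}$ opens a gap of definite size around an interior base point. Infinitely many up-jumps of a fixed size contradict a.s.\ convergence of the martingale. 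This avoids entirely the need to exhibit a decreasing configuration, to quantify a one-step decrease probability $p(\epsilon,M)$ uniformly in $h$, and to invoke a conditional Borel--Cantelli argument. (Your uniformity-in-$h$ step also has a small wrinkle: $h\mapsto G^{-1}(h)-G^{-1}(0)$ is monotone but \emph{not} continuous --- $\SM_x^{-1}$ jumps by $2$ across $x$ --- so the lower-semicontinuity/compactness argument should be replaced by reading off explicit constants from your construction.)

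Concerning the step you call ``the main obstacle'': it is less of an obstacle than you fear, precisely because of the monotonicity you invoke. The non-absolute summability in \eqref{eq:back_around_middle} afflicts the endpoints $G_u^{-1}(a)$ and $G_u^{-1}(b)$ individually, but the \emph{length} $G_u^{-1}(b)-G_u^{-1}(a)$ receives from a slit based at $x$ outside the current interval a contribution of one sign (negative) and of order $|x|^{-2}$ for $|x|$ large, so these contributions are absolutely summable and only help you. What does still need an argument in your scheme is (a) that with probability bounded below no Poisson point lands inside the \emph{evolving} preimage interval during the unit of time (a thinning/domination argument using that, on this event, the length never exceeds $M$), and (b) that the interval does not wander outside your clear zone $\{|x|\le N_0\}$, which requires the $L^2$ endpoint estimates of Lemma \ref{lemma:slit_estimates} and is not independent of the configuration you condition on. All of this is doable, but it is exactly the machinery the paper's up-jump argument renders unnecessary.
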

\begin{proof}
Fix an interval $I\subset\HH$. By Theorem \ref{thm:martingale} $H_I(t)$ is a positive martingale, therefore it converges a.s.
%\footnote{ by theorem 5.2.9 in  \cite{durrett2010probability} for example}. 
Next we rule out the possibility that $\lim_{t\to\infty}H_I(t) = C\neq 0$. Assume for the purpose of contradiction that $\lim_{t\to\infty}H_I(t) = C\neq 0$. Since $H_I$ converges a.s., for any $\epsilon >0$ there exists an a.s. finite $t_{\epsilon}>0$ such that for all $t>t_{\epsilon}$, $|H_I(t)-C| <\epsilon$. 

Consider the next time $t>t_\epsilon$ in which a particle connects to $I$ i.e. a point in the Poisson point process $(x,t)\in A$ with $x\in F^{-1}_{t^-}(I)$ and $t>t_\epsilon$ (it exists by the assumption that $C>0$). By simple calculation 
$$\inf_{y\in[-2C,2C]}\{s_y^{-1}(2C)-2C\}\ge\sqrt{(2C)^2+1}-2C=:\delta(C)>0.$$ 
Thus since for every $\epsilon>0$ small enough $\|F_{t^-}^{-1}(I)\|<2C$,
$$
\|F_t^{-1}(I)\|-\|F_{t^-}^{-1}(I)\|=\|s_x^{-1}\circ F_{t^-}^{-1}(I)\|-\|F_{t^-}^{-1}(I)\|\ge \delta(C)
.$$
Now choose an $0<\epsilon<\delta(C)$ and we obtain that $H_I(t)>C+\epsilon$ which is a contradiction to the assumption that $C\neq 0$. 

\end{proof}

The previous theorem yields that every tree has only finitely many particles ever attaching to it. First we need to set some notation. For any time $t>0$ define $K_t=F_t(\partial \HH)$. We call the connected components of $K_t\setminus\partial\HH$ the trees at time $t$ of the stationary HL. For an interval $I\subset\partial\HH$ we call the connected  component of $K_t\setminus (\partial\HH\setminus I)$ the trees emanating from $I$ at time $t$, and we denote them by $\CT_I(t)$. From the proof of Theorem \ref{thm:finitetrees} we obtain the following corollary.

\begin{corollary}\label{cor:lastparticle}
For every interval $I$ there exists a $t_I<\infty$ such that for every $t>t_I$, $F_t^{-1}(\CT_I(t))\times\{t\}\cap A=\emptyset$ a.s. In other words after time $t_I$ the interval $I$ will receive no more particles a.s. 
\end{corollary}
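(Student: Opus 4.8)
The plan is to deduce the corollary from Theorem \ref{thm:finitetrees} together with the quantitative gap estimate already observed in its proof. The key point is that the statement "$F_t^{-1}(\CT_I(t))\times\{t\}\cap A=\emptyset$ for all $t>t_I$" is, by the construction of the process via \eqref{eq:around_middle}, equivalent to saying that no Poisson point $(x,s)$ with $s>t_I$ has $x$ lying in the pre-image interval $F_{s-}^{-1}(I)$; indeed a new slit is grafted onto $\CT_I$ precisely when the Poisson atom falls in that pre-image. So what we must rule out is that infinitely many Poisson atoms land in $F_{s-}^{-1}(I)$ as $s\to\infty$.

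First I would record the equivalence just described, noting that $\|F_s^{-1}(I)\| = H_I(s)$ is the harmonic measure martingale, and that by Theorem \ref{thm:finitetrees} we have $H_I(s)\to 0$ a.s. Next I would argue by contradiction: suppose on an event of positive probability infinitely many atoms $(x_k,s_k)$, with $s_k\to\infty$, satisfy $x_k\in F_{s_k-}^{-1}(I)$. Each such atom, by the very computation performed in the proof of Theorem \ref{thm:finitetrees} (the bound $\inf_{y\in[-2C,2C]}\{s_y^{-1}(2C)-2C\}\ge\sqrt{(2C)^2+1}-2C=:\delta(C)>0$, applied with the current value $C$ of the harmonic measure in place of the limit), forces a jump $H_I(s_k)-H_I(s_k-)\ge \delta(H_I(s_k-))$ — that is, the harmonic measure strictly increases by a definite amount whenever a particle attaches. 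But $H_I$ converges a.s. to a finite limit (it is $0$, but even mere convergence suffices), so its jumps must tend to $0$; combined with the monotonicity of $\delta$ near its argument this is impossible unless $H_I(s_k-)\to 0$ — and then $\delta(H_I(s_k-))\to\delta(0)$? No: $\delta(c)=\sqrt{c^2+1}-c\to 1$ as $c\to 0$, so in fact the jumps stay bounded below by a quantity bounded away from $0$, giving an outright contradiction with convergence of $H_I$. Hence only finitely many atoms attach to $\CT_I$, and $t_I := \sup\{s_k\}<\infty$ a.s.

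I expect the only real subtlety — the main obstacle — to be the bookkeeping that identifies "a particle attaches to $\CT_I(t)$" with "a Poisson atom at time $t$ lies in $F_{t-}^{-1}(I)$", and more precisely making sure that when such an atom falls in $F_{t-}^{-1}(I)$ the induced change in $H_I$ is genuinely the length increase $\|s_x^{-1}\circ F_{t-}^{-1}(I)\| - \|F_{t-}^{-1}(I)\|$ rather than something smaller due to the slit possibly being grafted at an endpoint; since the slit map $s_x^{-1}$ lengthens any interval containing $x$ in its interior, and an atom in the open pre-image of $I$ lies strictly inside, this is fine, but it should be stated carefully. Everything else is a direct quotation of the displayed inequalities from the proof of Theorem \ref{thm:finitetrees} and the a.s. convergence of the martingale $H_I$.
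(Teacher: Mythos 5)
Your argument is correct and is essentially the paper's own: the paper derives this corollary directly from the proof of Theorem \ref{thm:finitetrees}, combining the a.s.\ convergence $H_I(t)\to 0$ with the observation that each attachment forces a jump of $H_I$ bounded below by a constant (indeed $\sqrt{c^2+1}-c\to 1$ as $c\to 0$), which is incompatible with convergence if infinitely many particles attach. Your care about endpoint atoms and the identification of attachments with atoms in $F_{t-}^{-1}(I)$ is the right bookkeeping and matches the paper's implicit reasoning.
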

\begin{remark}
Note that the random time $t_I$ is not a stopping time.
\end{remark}
The last statement of this section yields a geometric finiteness result for all trees in the half plane HL and is trivial from the stabilization and the fact the SHL is a monotone process. Let $d(\CT_I(t))=\sup_{z\in\CT_I(t)}\inf_{y\in I}|z-y|$ and $d(\CT_I)=\sup_{t\ge 0}d(\CT_I(t))$. 

\begin{corollary}\label{cor:finitetreesdiam}
$d(\CT_I)<\infty$ a.s.
\end{corollary}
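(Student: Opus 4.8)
The plan is to deduce the bound from two facts that are already available: the stabilization result of Corollary~\ref{cor:lastparticle} and the monotonicity of the SHL$(0)$ growth process.

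The first step is to reduce the claim to a single, almost surely finite time. By Corollary~\ref{cor:lastparticle} there is an a.s.\ finite (random) time $t_I$ after which no particle attaches to the trees emanating from $I$. Since in the forward process an arriving slit is glued onto the current aggregate without distorting the part already present (composing a slit map on the inside leaves the outer image unchanged), this means $\CT_I(t)=\CT_I(t_I)$ for all $t\ge t_I$. Because SHL$(0)$ is monotone we have $\CT_I(s)\subseteq\CT_I(t)$ for $s\le t$, and as $I$ is fixed the function $t\mapsto d(\CT_I(t))$ is non-decreasing; hence
\[
d(\CT_I)=\sup_{t\ge0}d(\CT_I(t))=\lim_{t\to\infty}d(\CT_I(t))=d(\CT_I(t_I))\qquad\text{a.s.}
\]
It therefore suffices to prove that $d(\CT_I(t_I))<\infty$ a.s.

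The second step is to show that $\CT_I(t_I)$ is a finite union of bounded particles. As in the proof of Theorem~\ref{thm:finitetrees}, a particle born at a Poisson atom $(s,x)$ attaches to the trees emanating from $I$ only when $x\in F_{s-}^{-1}(I)$, an interval of length $H_I(s-)$. I would work on the deterministic events $\{t_I\le k\}$, $k\in\BN$ (whose union has full probability): on $\{t_I\le k\}$ every particle attaching to these trees is born by time $k$, and since $H_I$ is cadlag and hence bounded on $[0,k]$, while $s\mapsto F_s$ is a.s.\ continuous so the intervals $F_{s-}^{-1}(I)$ stay within a bounded part of $\BR$ for $s\le k$, the admissible locations $(s,x)$ are confined to a bounded region of $[0,\infty)\times\BR$, which a.s.\ contains only finitely many Poisson atoms. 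Each such particle is the continuous image of a compact slit segment under a conformal map of $\HH$ --- obtained, as in Section~\ref{construction:amanda}, as a locally uniform limit of finite compositions of slit maps, which send bounded sets to bounded sets --- hence a bounded set, and a finite union of bounded sets is bounded.

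The step I expect to require the most care is the second one, precisely because $t_I$ is not a stopping time: one cannot argue directly ``at time $t_I$'' but must localise on the deterministic events $\{t_I\le k\}$ before invoking finiteness of the Poisson measure on bounded regions and continuity of the maps $F_s$. Granting that, the reduction in the first step makes the corollary an immediate consequence of stabilization and monotonicity, in agreement with the comment in the text.
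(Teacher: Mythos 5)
Your proof is correct and follows exactly the route the paper intends: the paper gives no written proof of this corollary, stating only that it is ``trivial from the stabilization and the fact the SHL is a monotone process,'' and your argument is precisely the careful fleshing-out of that remark via Corollary~\ref{cor:lastparticle}, monotonicity, and the a.s.\ finiteness of the Poisson measure on the bounded region of admissible attachment locations. Your localisation on the events $\{t_I\le k\}$ to sidestep the fact that $t_I$ is not a stopping time is a sensible precaution and does not depart from the paper's approach.
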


% ********************************************************************************************************************************
% ********************************** Section I - Introduction ********************************************************************
% ********************************************************************************************************************************
\newcommand{\wid}{\text{Wid}}
%\subsection{Moment bound}\label{sec:mom}
Next we show that the finite trees are large in expectation.
\begin{proposition}
For every interval $I$, $\ev[d(\CT_I)]=\infty$. 
\end{proposition}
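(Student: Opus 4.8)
The plan is to compare the tree diameter with the running maximum of the harmonic-measure martingale $H_I$, and to exploit the fact that $H_I$ converges to $0$ while keeping constant positive mean. Concretely, I would first prove that $\ev\big[\sup_{t\ge0}H_I(t)\big]=\infty$: by Theorem~\ref{thm:martingale}, $(H_I(t))_{t\ge0}$ is a nonnegative martingale with $\ev[H_I(t)]=H_I(0)=\|I\|>0$ for every $t$, and by Theorem~\ref{thm:finitetrees} it tends to $0$ almost surely; if $\sup_{t\ge0}H_I(t)$ were integrable it would be an $L^1$ dominating function for $\{H_I(t)\}_{t\ge0}$, so dominated convergence would give $\ev[H_I(t)]\to0$, contradicting $\ev[H_I(t)]=\|I\|$.

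The core of the argument is then the pathwise geometric bound: there is an absolute constant $C$ such that, a.s., $H_I(t)\le C\big(\|I\|+d(\CT_I(t))\big)$ for all $t\ge0$. Write $I=(a,b)$ and $J_t:=\big(F_t^{-1}(a),F_t^{-1}(b)\big)$, so that $H_I(t)=|J_t|$ and $F_t$ maps the interval $J_t\subset\R$ onto the ``tree region'' $T_t:=I\cup\CT_I(t)$, a connected subset of $\overline\HH$ meeting $\R$ and contained in the closed half-disk $\overline B(p,R)\cap\overline\HH$, where $p$ is an endpoint of $I$ and $R:=\|I\|+d(\CT_I(t))$ (every point of $\CT_I(t)$ is within $d(\CT_I(t))$ of $I$, hence within $R$ of $p$). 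I would then compare two estimates of the harmonic measure of $T_t$ inside $\Omega_t:=F_t(\HH)=\HH\setminus D_t$, evaluated at the point $w_n:=F_t(in^2)$ as $n\to\infty$; the choice $w_n=F_t(in^2)$ is made so that $F_t^{-1}(w_n)=in^2$ exactly, while Lemma~\ref{lem:prelimto_all_powerful} gives $\ev\big[|F_t(in^2)-i(n^2+\pi t/2)|^2\big]\le Ct/(1+n^2)$, whence by Chebyshev and Borel--Cantelli $w_n=i(n^2+\pi t/2)+o(1)$ a.s., so in particular $\Im w_n\to\infty$ and $w_n\in\Omega_t$ for large $n$. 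By conformal invariance of harmonic measure and the Poisson kernel, for large $n$,
\[ \mathrm{hm}\big(w_n,T_t;\Omega_t\big)=\mathrm{hm}\big(in^2,J_t;\HH\big)\ge\frac{|J_t|}{2\pi n^2}. \]
On the other hand, since $T_t\subset\overline B(p,R)$, a Brownian motion from $w_n$ can exit $\Omega_t$ through $T_t$ only after reaching $\overline B(p,R)$, which happens only if it hits the semicircular part of $\partial B(p,R)$ before $\R$ --- probability $\mathrm{hm}(\psi(w_n),[p-2R,p+2R];\HH)\le 8R/(\pi\Im w_n)$, where $\psi(z)=(z-p)+R^2/(z-p)+p$ maps $\HH\setminus\overline B(p,R)$ conformally onto $\HH$ --- or hits $\R$ inside $[p-R,p+R]$ --- probability $\le 2R/(\pi\Im w_n)$. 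Thus $\mathrm{hm}(w_n,T_t;\Omega_t)\le CR/\Im w_n$ with $C$ absolute, and since $\Im w_n=n^2+O(1)$, letting $n\to\infty$ in the two displays yields $|J_t|\le C'R$, i.e.\ $H_I(t)\le C'(\|I\|+d(\CT_I(t)))$.

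Since $t\mapsto\CT_I(t)$ is increasing, $d(\CT_I(t))\le d(\CT_I)$, so the geometric bound upgrades to $\sup_{t\ge0}H_I(t)\le C'(\|I\|+d(\CT_I))$ a.s.; taking expectations and using the first step, $\infty=\ev\big[\sup_{t\ge0}H_I(t)\big]\le C'(\|I\|+\ev[d(\CT_I)])$, hence $\ev[d(\CT_I)]=\infty$ (note $d(\CT_I)<\infty$ a.s.\ by Corollary~\ref{cor:finitetreesdiam}, so this expresses a genuinely heavy tail rather than non-finiteness). The soft steps are the martingale/dominated-convergence argument and the final combination; the main obstacle is the geometric bound, and the two delicate points inside it are (i) placing $w_n$ in the range of $F_t$ and controlling $F_t^{-1}(w_n)$ pathwise --- the estimates of Lemma~\ref{lem:prelimto_all_powerful} are in mean only, forcing the passage to the sparse subsequence $n^2$ --- and (ii) making the upper bound on $\mathrm{hm}(w_n,T_t;\Omega_t)$ insensitive to all the other trees present in $D_t$, which is precisely why it is phrased as ``the Brownian motion must first reach the half-disk $\overline B(p,R)$''.
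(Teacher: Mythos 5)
Your proof is correct, and it takes a genuinely different route from the paper's. The paper argues by contradiction using translation invariance: if $\ev[d(\CT_I)]<\infty$, then $\sum_n\prob[d(\CT_{I+n})>n+1]<\infty$, so by Borel--Cantelli all sufficiently distant trees are short; together with Corollary \ref{cor:finitetreesdiam} this forces the whole aggregate to avoid a truncated cone above the origin, contradicting the linear vertical growth from Lemma \ref{lem:prelimto_all_powerful}. You instead combine two ingredients: (a) the running maximum of the nonnegative martingale $H_I$ has infinite mean, because $H_I(t)\to0$ a.s.\ (Theorem \ref{thm:finitetrees}) while $\ev[H_I(t)]\equiv\|I\|$ (Theorem \ref{thm:martingale}), so no integrable dominating function can exist; and (b) a pathwise Beurling-type comparison $H_I(t)\le C(\|I\|+d(\CT_I(t)))$, obtained by sandwiching the harmonic measure of the tree region seen from $F_t(in^2)$ between the exact Poisson-kernel value $|J_t|/(\pi n^2)(1+o(1))$ and the half-disk upper bound $CR/\Im F_t(in^2)$. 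Both bounds check out, including the careful points you flag: the passage to the subsequence $n^2$ to upgrade the $L^2$ estimate of Lemma \ref{lem:prelimto_all_powerful} to an a.s.\ statement, and the fact that the upper bound only uses $\Omega_t\subset\HH$ and is therefore insensitive to the other trees. Your argument is arguably stronger than needed: it yields the quantitative pathwise inequality $\sup_{t\ge0}H_I(t)\le C(\|I\|+d(\CT_I))$, hence a lower bound on the tail of $d(\CT_I)$ in terms of the (non-uniformly-integrable) martingale's maximum, whereas the paper's argument only delivers the divergence of the expectation; on the other hand the paper's proof is softer, needing no harmonic-measure estimates beyond what is already established.
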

\begin{proof}
It is enough to prove the theorem to the interval $I=[0,1]$. Assume that $\ev[d(I)]<\infty$. 
\begin{align}
\ev[d(\CT_I)]&=\int_{0}^\infty\prob[d(\CT_I) > x]dx=\frac{1}{2}\int_{-\infty}^\infty\prob[d(\CT_{I}) > |x|]dx\\
&\ge \frac{1}{2}\sum_{n=-\infty}^\infty \prob[d(\CT_{I+n}) > (n+1)],
\end{align}
where the last inequality follows the translation invariance of the model. By the Borel-Cantelli Lemma we get that almost surely there is some $K\in\BN$ large enough such that for all $n\in\BZ$ such that $|n|>K$ we have $d(\CT_{I+n})<(n+1)$. By Corollary \ref{cor:finitetreesdiam}, $d(\CT_{(-K,K)})<\infty$ a.s. which means that there exists an a.s.~finite constant $M<\infty$ such that $\CT_{\BR}$ does not intersect the set $\{z\in\BH: \Im(z)>M\vee K\}\cap\{z\in\BH:\Im(z)>|\Re(z)|\}$. This contradicts the average linear growth stated in Lemma \ref{lem:prelimto_all_powerful} for any $t>0$ large enough.

%This means $\CT_{(-K,K)}(t)$ has a uniformly bounded from zero harmonic measure for any $t>0$ large enough (See \cite{procaccia2017sets} for a detailed proof). This is in contradiction to Theorem \ref{thm:finitetrees}. \note{Use Lemma 3.4 to state that no empty space}
\end{proof}

The next proposition bounds the width of trees at any time $t$ in a height $\lambda$.
For any $\lambda>0$ let $\CT^\lambda_{[0,1)}(t)$ be the connected component of $\CT_{[0,1)}(t)\cap\left(\BR\times[0,\lambda]\right)$ intersecting $\partial\BH$ (there is exactly one by properties of conformal maps). Define
$$\wid_\lambda(\CT_{[0,1)}(t))=\sup\left\{|x-y|:(x,\lambda),(y,\lambda)\in\CT^\lambda_{[0,1)}(t)\right\}.$$
Note that by planarity if two points $(x,\lambda),(y,\lambda)$ are in $\CT^\lambda_{[0,1)}(t)$, then for any $i\neq0$ and any $x<\xi<y$, $(\xi,\lambda)\notin \CT^\lambda_{[i,i+1)}(t)$.

\begin{proposition}
For any $\lambda>0$ and $t>0$ 
$$\ev\left[\wid_{\lambda}(\CT_{[0,1)}(t))\right]\le1.$$
\end{proposition}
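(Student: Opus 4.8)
The plan is to turn the expectation into a spatial average via ergodicity, bound that average using the planarity observation stated just before the proposition, and then reduce everything to the fact that a certain horizontal span grows linearly. Write $w_i := \wid_\lambda(\CT_{[i,i+1)}(t))$ for $i\in\BZ$. By translation invariance the sequence $(w_i)_{i\in\BZ}$ is stationary, and by Proposition \ref{prop:ergodic} the time-$t$ configuration is ergodic under horizontal shifts, so $(w_i)$ is ergodic. Birkhoff's pointwise ergodic theorem applied to the nonnegative function $w_0$ gives
\[
\frac{1}{2L+1}\sum_{i=-L}^{L} w_i \;\longrightarrow\; \ev\left[\wid_\lambda(\CT_{[0,1)}(t))\right] \qquad \text{a.s. as } L\to\infty,
\]
the limit being a priori possibly $+\infty$. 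So it suffices to prove the left-hand side is asymptotically at most $1$.

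For each $i$ with $w_i>0$ let $x_L^{(i)}\le x_R^{(i)}$ be the extreme real parts of the points of $\CT^\lambda_{[i,i+1)}(t)$ at height exactly $\lambda$, so $w_i = x_R^{(i)}-x_L^{(i)}$. The planarity remark preceding the statement says precisely that for $i\neq j$ no point of $\CT^\lambda_{[j,j+1)}(t)$ at height $\lambda$ lies in $(x_L^{(i)},x_R^{(i)})$; applying this with the roles of $i$ and $j$ exchanged shows that the intervals $[x_L^{(i)},x_R^{(i)}]$, $i\in\BZ$, have pairwise disjoint interiors. Writing $R_L := \max\{x_R^{(i)} : |i|\le L,\ w_i>0\}$ and $L_L := \min\{x_L^{(i)} : |i|\le L,\ w_i>0\}$ (and interpreting the sum as $0$ if no such $i$ exists), this immediately yields
\[
\sum_{i=-L}^{L} w_i \;\le\; R_L - L_L .
\]
Thus it remains to show that $R_L - L_L \le (2L+1)(1+o(1))$ a.s.; combined with the two displays this forces $\ev[\wid_\lambda(\CT_{[0,1)}(t))]$ to be finite and at most $1$.

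To bound the span, fix $\epsilon>0$. By Corollary \ref{cor:finitetreesdiam} each tree lies a.s.\ in a bounded neighbourhood of its root interval, so there is $C=C(\epsilon)<\infty$ with $\prob(d(\CT_{[0,1)})\le C)\ge 1-\epsilon$; call $i$ \emph{tame} if $d(\CT_{[i,i+1)})\le C$, which in particular forces the height-$\lambda$ slice of $\CT_{[i,i+1)}(t)$ into $[i-C,i+1+C]$. By ergodicity the tame indices form a stationary point process of density $\ge 1-\epsilon>0$, so its gaps are an a.s.-finite stationary sequence; such a sequence satisfies $\mathrm{gap}_n=o(n)$ and hence has maximal gap up to $n$ of size $o(n)$, so the nearest tame index exceeding $L$ is $L+o(L)$ and the nearest below $-L$ is $-L-o(L)$. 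Using planarity once more (two non-crossing trees with ordered roots have ordered height-$\lambda$ slices), the slice of any tree rooted in $[-L,L+1)$ lies horizontally to the left of the slice of the next tame tree to its right and to the right of the previous one; therefore $R_L \le (L+o(L))+1+C = L+o(L)$ and $L_L \ge -L-o(L)$, giving $R_L-L_L=(2L+1)(1+o(1))$ as required.

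The main obstacle is exactly this last step: a single tree can be very large — the paper even shows $\ev[d(\CT_I)]=\infty$ — so the extreme trees rooted in $[-L,L+1)$ cannot be controlled by a naive union bound over $|i|\le L$, and one must instead exploit planarity to ``sandwich'' every such tree's height-$\lambda$ slice between two tame trees whose horizontal positions are pinned near $\pm L$ by the ergodic theorem. (If one could establish $\ev[d(\CT_{[0,1)}(t))]<\infty$ at the fixed time $t$, this step would collapse to the elementary fact that $\max_{|i|\le L}d(\CT_{[i,i+1)}(t))=o(L)$ for a stationary integrable sequence, and the tame-tree device could be dropped.)
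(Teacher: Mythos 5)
Your first two steps are sound and are essentially an ``unrolled'' version of what the paper does: the paper replaces the Birkhoff spatial average by the mass transport principle, sending from each tree $i$ to each unit interval $j$ the length of the overlap of tree $i$'s height-$\lambda$ width interval with $[j,j+1)$; diagonal invariance gives $\ev[\sum_j\zeta(j,0)]=\ev[\sum_i\zeta(0,i)]$, the left side dominates $\ev[\wid_\lambda]$, and the right side is at most $1$ by exactly the disjointness-of-slices observation you use. The advantage of mass transport is that there is no boundary term to control, which is precisely where your argument breaks.

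The gap is in the sandwiching step. A \emph{tame} index $\tau_L$ is one whose tree has diameter at most $C$; such a tree is contained in a box of height $C$ above its root and therefore need not reach height $\lambda$ at all (for $\lambda>C$ it never does, and even for $\lambda\le C$ nothing guarantees it). If the tame tree has no points at height $\lambda$, its height-$\lambda$ slice is empty and the ordering statement ``trees with ordered roots have ordered slices'' is vacuous for it: it imposes no constraint on where other trees' slices sit. Geometrically, a tall tree rooted at some $i\le L$ is free to arch over the tame tree at $\tau_L$ (which occupies only a bounded neighbourhood of the real line) and place its height-$\lambda$ slice arbitrarily far to the right; planarity only forbids its slice from interleaving with the slices of \emph{other trees that also reach height} $\lambda$, and there may be none nearby. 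Since the paper itself shows $\ev[d(\CT_I)]=\infty$, the extreme slices genuinely cannot be pinned near $\pm L$ by any integrability argument, so $R_L-L_L\le(2L+1)(1+o(1))$ is not established. A repair along your lines would need barriers that both reach height $\lambda$ and have bounded diameter, i.e.\ the statement $\prob\bigl(\CT_{[0,1)}(t)\text{ reaches height }\lambda\text{ and }d(\CT_{[0,1)}(t))\le C\bigr)>0$ for some $C=C(\lambda,t)$, together with a rigorous proof of the slice-ordering claim; neither is provided in your argument or in the paper, whereas the mass-transport proof needs neither.
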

\begin{proof}
Fix a constant $\lambda>0$. Define a function $\zeta:\BZ\times\BZ\mapsto \{0,1\}$ by 
$$
\zeta(j,i)=\max\left\{|[x,y]\cap[j,j+1)|:(x,\lambda),(y,\lambda)\in\overline{\CT_{[i,i+1)}(t)\cap\BR\times[0,\lambda)}\right\}
,$$
where the $\max\emptyset=0$.
%\begin{align}
%\zeta(j,i)=\left\{
%\begin{array}{ll}
%1&\text{ if }\CT_{[i,i+1)}(t)\cap \left([j,j+1)\times\{ct\}\right)\neq\emptyset\\
%0& \text{ otherwise}
%\end{array} \right.
%.\end{align}
It is easy to see by the translation invariance of the Poisson process $A$ under $P$ that $\zeta$ is diagonally invariant. By the mass transport principle we obtain
\begin{equation}\label{eq:masstrans}
\ev\left[\sum_i\zeta(i,0)\right]=\ev\left[\sum_i\zeta(0,i)\right].
\end{equation}
The LHS of \eqref{eq:masstrans} is greater than $\ev\left[\wid_{\lambda}(\CT_{[0,1)}(t))\right]$. The RHS of \eqref{eq:masstrans} is bounded by the length of $[0,1)\times\{\lambda\}$, proving the result.
%$$
%\ev\left[\sum_{k\in\BZ}\zeta(0,k)\right]\le 2\sum_{k=1}^\infty\prob\left(\sup_{z\in[0,1)\times\{ct\}}|\tilde{F}_t(z)-(z+i\pi s/2)|>k\right)
%\le \frac{Ct}{ct},$$
%where the last inequality was due to the proof of Lemma \ref{lem:all_powerful} (noting that $\Im z$ is the same for all $z\in[0,1)\times\{ct\}$). \note{dropping the square is in the right direction right?}
\end{proof}
$~$

\section{Scaling limits}
\label{sec:scaling}

A natural variation on the SHL$(0)$ model is to attach slit particles of length $\delta$, corresponding to the slit map $\tilde{s}_x^\delta(z)=\sqrt{(z-x)^2 - \delta^2}+x = \delta \tilde{s}_{x/\delta}(z/\delta)$ at positions of the rescaled Poisson process $\delta P$ and then analyse the corresponding conformal mappings in the limit as $\delta \to 0$. Due to scale invariance of the half-plane, this is equivalent to analysing the original model with slit particles of length 1, under the rescaling of space and time given by
\begin{equation}\label{eq:scale}
F^{\delta}_{t}(z) = \delta F_{\delta^{-1} t}(\delta^{-1}z)
\end{equation}
as $\delta \to 0$.

The following result is an immediate consequence of Lemma \ref{lem:prelimto_all_powerful}, via the scaling relationship \eqref{eq:scale} and the connection between the forward and backwards SHL processes.

\begin{theorem}\label{thm:scaling}
Suppose $\mu$ is a $\mathrm{SHL}(0)$ process. Then, for any $T>0$ and $z \in \overline{\mathbb{H}}{:=\BH\cup\partial\BH}$,
\[
\sup_{t \leq \delta^{-1}T} | F^{\delta}_{t}(z) - (z + i \pi t/2)| \to 0
\]
in probability as $\delta \to 0$.
\end{theorem}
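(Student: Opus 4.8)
The plan is to derive Theorem \ref{thm:scaling} directly from the quantitative bounds of Lemma \ref{lem:prelimto_all_powerful}, using the scaling relation \eqref{eq:scale} to translate the statement about $F^\delta$ into a statement about the unscaled process $F$, and then passing through the forward-backward correspondence so that the estimates of Lemma \ref{lem:prelimto_all_powerful}, which are stated for the backward process $\tilde F$, can be applied. First I would fix $T>0$ and $z \in \overline{\HH}$ and observe that, by \eqref{eq:scale},
\[
\sup_{t \leq \delta^{-1}T} | F^{\delta}_{t}(z) - (z + i \pi t/2)| = \delta \sup_{s \leq \delta^{-2}T} | F_{s}(\delta^{-1}z) - (\delta^{-1}z + i \pi s/2)|,
\]
after the change of variables $s = \delta^{-1}t$. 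So it suffices to show that $\delta \sup_{s \leq \delta^{-2}T} | F_{s}(\delta^{-1}z) - (\delta^{-1}z + i \pi s/2)| \to 0$ in probability.

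Next, since $F_s$ and $\tilde F_s$ have the same distribution for each fixed $s$, and more importantly the whole trajectory-level comparison in \eqref{eq:concval} is available for $\tilde F$, I would pass to the backward process. Strictly speaking the forward-backward bijection of Section \ref{sec:reverse} identifies $\tilde F_t = [\lim_{s\searrow t} F_{T'-s}]^{-1}\circ F_{T'}$, so one has to be a little careful: the cleanest route is to note that $\sup_{s\le \delta^{-2}T}|F_s(w) - (w+i\pi s/2)|$ has the same law as $\sup_{s\le \delta^{-2}T}|\tilde F_s(w) - (w + i\pi s/2)|$, since the process $s \mapsto F_s$ on $[0, \delta^{-2}T]$ and the process $s\mapsto \tilde F_s$ on $[0,\delta^{-2}T]$ are related by the measure-preserving time reversal (the supremum over $s$ of the relevant increments is invariant under reversing the order of composition in distribution). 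Then I would apply \eqref{eq:concval} with $w = \delta^{-1}z$, noting $\Im w = \delta^{-1}\Im z$ when $\Im z > 0$ (and for boundary points $z \in \partial\HH$ one uses continuity up to the boundary, approximating $z$ by interior points, or simply $\Im w \ge 0$ giving the crude bound with $1+\Im w \ge 1$), to get
\[
\ev\left[ \sup_{s \leq \delta^{-2}T} | \tilde F_{s}(\delta^{-1}z) - (\delta^{-1}z + i \pi s/2)|^2 \right] \leq \frac{C \delta^{-2}T}{1 + \delta^{-1}\Im z} \leq C' \delta^{-1} T
\]
for $\delta$ small (using $\Im z>0$; for $z$ real one gets $\le C \delta^{-2} T$, which is still enough). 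Multiplying by $\delta^2$ and applying Markov's inequality, $\prob\big(\delta \sup_{s\le\delta^{-2}T}|\cdots| > \eps\big) \le \delta^2 \cdot \eps^{-2}\cdot C'\delta^{-1}T = C'T\eps^{-2}\delta \to 0$, which is exactly convergence in probability.

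The main obstacle I anticipate is the boundary case $z \in \partial\HH$: Lemma \ref{lem:prelimto_all_powerful}'s bound \eqref{eq:concval} degenerates there (the factor $(1+\Im z)^{-1}$ is just a constant, not small), and in fact a naive bound only gives $\ev[\cdots] \le C\delta^{-2}T$, so after multiplying by $\delta^2$ one gets a quantity of order $T$ that does not obviously vanish. This is where the argument needs care: one should either (i) apply the interior estimate at a point $z + i\rho$ with $\rho$ a fixed small constant, use continuity of $F_t$ up to $\partial\HH$ (as invoked in the proof of Theorem \ref{thm:prtszbnd}) together with a derivative bound from \eqref{eq:concder} to transfer the estimate to the boundary point at the cost of controlling $\int_0^\rho |F_t'(z+iu)|\,du$, or (ii) observe that for fixed $\rho>0$, $|F_t(z) - F_t(z+i\rho)|$ and $|i\pi t/2 - i\pi t/2|$ differ by a term controlled uniformly in $t$ by the tightness of derivatives on the boundary (Theorem \ref{thm:derbndproc}), so that $\delta|F_{\delta^{-2}t}(\delta^{-1}z) - F_{\delta^{-2}t}(\delta^{-1}z + i\rho)| \to 0$ provided $\delta^{-1}$ is rescaled appropriately — but one must check the rescaled imaginary part $\delta^{-1}\cdot$(something) stays bounded below, which it does not for a fixed additive $\rho$. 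The honest fix is to take $\rho = \rho(\delta) = \delta^{-1}\rho_0$ inside the unscaled picture, i.e. evaluate the interior estimate at $\delta^{-1}(z + i\rho_0)$, getting $\Im = \delta^{-1}\rho_0 \to \infty$ and hence the bound $\ev[\cdots] \le C\delta^{-2}T/(1+\delta^{-1}\rho_0) \le C\rho_0^{-1}\delta^{-1}T$, which after multiplying by $\delta^2$ is $O(\delta)\to 0$; then one removes the shift $i\rho_0$ in the scaled coordinates, which is a shift of size $\delta\rho_0 \to 0$ in the limit, using continuity of the scaling-limit drift $z \mapsto z + i\pi t/2$. I would write this up by first doing the interior case cleanly, then handling $z\in\partial\HH$ (or more generally any fixed $z$) by this $\rho_0$-shift device, and finally remarking that the constants are uniform in $t \le \delta^{-1}T$ because the supremum is already taken inside the expectation.
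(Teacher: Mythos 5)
Your interior case ($\Im z>0$) is correct and is exactly the paper's argument: the scaling relation \eqref{eq:scale} plus \eqref{eq:concval} gives $\ev\big[\sup_{t\le\delta^{-1}T}|F_t^\delta(z)-(z+i\pi t/2)|^2\big]\le CT/(1+\delta^{-1}\Im z)\to0$, and Chebyshev finishes. The gap is in your boundary case. Your ``honest fix'' evaluates at $z+i\rho_0$ in scaled coordinates, but this is a shift of \emph{fixed} size $\rho_0$, not $\delta\rho_0$ as you assert: in the unscaled picture the two points $\delta^{-1}z$ and $\delta^{-1}(z+i\rho_0)$ are at distance $\delta^{-1}\rho_0\to\infty$. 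Removing the shift therefore requires showing that $\sup_{t\le\delta^{-1}T}|F_t^\delta(z)-F_t^\delta(z+i\rho_0)|$ is small with high probability, and nothing you cite provides this: the derivative bound \eqref{eq:concder} contains a factor $\exp\big(C(1+|\log(\Im z)|)t\big)$ that blows up as the evaluation point approaches $\partial\HH$ over the time horizon $\delta^{-2}T$, and Theorem \ref{thm:derbndproc} is a tightness statement at a single time and a single boundary point, not a uniform-in-time bound along a segment. The difference of the two trajectories is a difference of martingales whose quadratic variation you would again need to control at the boundary point --- which is the original problem in disguise. (Your alternative (i) fails for the same reason.)

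The paper closes the boundary case by a different device, which exploits the monotonicity in $s$ of $\Im\tilde F_s(0)$. Fix an intermediate time $t$ (in scaled units). The crude bound $\ev\big[\sup_{s\le t}|F_s^\delta(0)-i\pi s/2|^2\big]\le C\delta t$ together with Chebyshev gives $\prob\big(\Im F_t^\delta(0)<\pi t/4\big)\le C\delta/t$; that is, by time $t$ the image of $0$ has with high probability already lifted to height of order $t$. One then splits the quadratic-variation integral
\begin{equation*}
\delta\,\ev\left[\int_0^{\delta^{-1}T}\frac{C}{1+\delta^{-1}\Im\tilde F_s^\delta(0)}\,ds\right]
\end{equation*}
at time $t$: the piece on $[0,t]$ contributes at most $C\delta t$, while on $[t,\delta^{-1}T]$ monotonicity lets one replace $\Im\tilde F_s^\delta(0)$ by $\Im\tilde F_t^\delta(0)$ and bound the integrand by $C\delta/t$ on the good event and by $C$ on its complement of probability $\le C\delta/t$, for a total of $C\delta t+CT\delta/t\to0$ for each fixed $t$. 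This self-bootstrapping off the boundary is the ingredient your argument is missing.
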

\begin{proof}
From the proof of Lemma \ref{lem:prelimto_all_powerful}, if $z \in \overline{\mathbb{H}}$, then
\begin{align*}
\tilde F^\delta_t(z) 
&= \delta \left ( z/\delta + M_{\delta^{-1}t}(z/\delta) + i \pi \delta^{-1} t /2 \right ) \\
&= z + i \pi t/2 + \delta M_{\delta^{-1}t}(z/\delta)  
\end{align*}
and
\begin{align*}
\ev \left [ \sup_{s \leq t} \Big|\delta M_{\delta^{-1}s}(z/\delta)\Big|^2 \right ] 
&\leq \frac{C \delta t}{1+ \delta^{-1} \Im z}.
\end{align*}
If $\Im z > 0$, then the result follows immediately. 

A little more care is needed if $z \in\mathbb{R}$. By translation invariance, we may assume that $z=0$. Fix $0<t< T \delta^{-1}$. By the result above,
\begin{align*}
\ev \left [ \sup_{s \leq t} |F^\delta_s(0)- i \pi s / 2|^2 \right ] 
&\leq C \delta t,
\end{align*}
so, by Chebychev,
\[
\prob \left (\Im F_t^\delta(0) < \pi t /4 \right ) \leq \frac{C \delta}{t}.
\]
Then, returning to the proof of Lemma \ref{lem:prelimto_all_powerful}, we have
\begin{align*}
\ev \left [ \sup_{s \leq \delta^{-1}T}|F^\delta_s(0)- i \pi s /2|^2 \right ] 
&\leq \delta \ev \left [ \int_0^{\delta^{-1}T} \frac{C}{1+ \delta^{-1}\Im \tilde{F}^\delta_s(0)} ds \right ] \\
&\leq \delta \int_0^t C ds + \delta \ev \left [ \int_t^{\delta^{-1}T} \frac{C }{1+ \delta^{-1}\Im \tilde{F}^\delta_t(0)} ds \right ] \\
&\leq C \delta t  + C T \delta / t \\
&\underset{\delta \to 0}{\longrightarrow} 0,
\end{align*}
where the penultimate line follows from  bounding the expectation separately on the event $\{ \Im F_t^\delta(0) \geq \pi t/2\}$ and its complement. 
\end{proof}

Harmonic measure at time $t>0$ of an interval $I=(a,b)\subset\partial\BH$ can be defined for the rescaled process by
$$H^\delta_I(t):=(F^\delta_t)^{-1}(b)-(F^\delta_t)^{-1}(a) = \delta H_{\delta^{-1}I}(\delta^{-t}t).$$

Exactly as in Theorem \ref{thm:martingale}, $H^\delta_I(t)$ is a martingale. Furthermore, as $\delta \to 0$, it converges to a continuous process, and hence (on an appropriate time-scale) converges to a  Brownian motion. The following result makes this precise.  

\begin{theorem}
\label{thm:harmbm}
As $\delta \to 0$, the process $(F^\delta_{\delta^{-1}t})^{-1}(a) \to 2 B_t(a) / \sqrt{3}$ in distribution, where $B_t(a)$ is a Brownian motion starting from $a$.
\end{theorem}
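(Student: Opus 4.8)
The plan is to identify $(F^\delta_{\delta^{-1}t})^{-1}(a)$ with a diffusively rescaled L\'evy martingale and then apply the classical Brownian approximation. By the scaling relation \eqref{eq:scale}, $(F^\delta_{\delta^{-1}t})^{-1}(a)=\delta\,U_{\delta^{-2}t}(a/\delta)$, where $U_s(x_0):=F_s^{-1}(x_0)$ denotes the trajectory of the pre-image of the real point $x_0$ under the forward $\mathrm{SHL}(0)$ flow; since $U_0(x_0)=x_0$ the rescaled process starts at $a$. At a jump time $t_k$ with a single Poisson atom $x_k$, condition \eqref{eq:around_middle} gives $F_{t_k}=F_{t_k-}\circ\tilde s_{x_k}$, hence $F_{t_k}^{-1}=\tilde s_{x_k}^{-1}\circ F_{t_k-}^{-1}$, so $s\mapsto U_s(x_0)$ is the pure-jump process updated by $U_{t_k}(x_0)=\tilde s_{x_k}^{-1}(U_{t_k-}(x_0))$. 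On $\partial\HH$ one has $\tilde s_x^{-1}(u)=x+\operatorname{sgn}(u-x)\sqrt{(u-x)^2+1}$, so writing $d:=U_{t_k-}(x_0)-x_k$ the jump equals $\operatorname{sgn}(d)\bigl(\sqrt{d^2+1}-|d|\bigr)$: it points away from $x_k$ and has magnitude in $(0,1]$.

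First I would show that $U_\cdot(x_0)-x_0$ is, in the $n\to\infty$ limit defining it, a centered L\'evy martingale. The law of a single jump depends only on $|U_{t_k-}(x_0)-x_k|$, so by translation invariance of the driving Poisson measure the increment of $U$ over an interval $(r,t]$ is independent of $\CF_{0,r}$ with a law depending only on $t-r$; the sole position-dependence in the axiomatic definition comes from the truncation $|x|\le n$ in \eqref{eq:around_middle}/\eqref{eq:back_around_middle} not being symmetric about $U_r(x_0)$, which costs only an $O(|U_r(x_0)|/n)$ correction per unit time that vanishes as $n\to\infty$. Its L\'evy measure $\nu$ is the push-forward of Lebesgue measure $dx$ under $x\mapsto-\operatorname{sgn}(x)\bigl(\sqrt{x^2+1}-|x|\bigr)$; it is supported on $(-1,1)$, symmetric, has $\int|y|\,\nu(dy)=\infty$ (the non-absolute convergence responsible for the principal-value centering, i.e.\ for the symmetric-truncation choice in the axioms), and
\[
\int_{\R} y^2\,\nu(dy)=\int_{\R}\bigl(\sqrt{v^2+1}-|v|\bigr)^2\,dv=2\int_0^\infty e^{-2\theta}\cosh\theta\,d\theta=1+\tfrac13=\tfrac43,
\]
using the substitution $v=\sinh\theta$ (so $\sqrt{v^2+1}-v=e^{-\theta}$). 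Thus $U_\cdot(x_0)-x_0$ is a square-integrable centered L\'evy process with jumps bounded by $1$ and variance $\tfrac43$ per unit time; equivalently $U_\cdot(x_0)$ is a martingale (which also follows from Theorem \ref{thm:martingale} together with the mirror symmetry of the law) with deterministic predictable quadratic variation $\langle U_\cdot(x_0)\rangle_t=\tfrac43 t$.

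Then I would pass to the limit $\delta\to0$. For each $\delta>0$, $X^\delta_t:=\delta\,U_{\delta^{-2}t}(a/\delta)$ is a L\'evy martingale with $X^\delta_0=a$, predictable quadratic variation $\langle X^\delta\rangle_t=\delta^2\cdot\tfrac43\,\delta^{-2}t=\tfrac43 t$ for every $\delta$, and jumps bounded deterministically by $\delta$. By the classical Brownian approximation for L\'evy processes --- equivalently, by Rebolledo's martingale functional central limit theorem, whose hypotheses (a deterministic continuous limiting predictable quadratic variation $\tfrac43 t$, and maximal jump tending to $0$) are met here --- $X^\delta$ converges in distribution, in the Skorokhod space, to the continuous Gaussian martingale started at $a$ with quadratic variation $\tfrac43 t$, i.e.\ to a Brownian motion started from $a$ with variance parameter $\tfrac43$, the process written $2B_t(a)/\sqrt3$ in the statement. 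Tightness is automatic, the predictable quadratic variation being deterministic and continuous and the jumps uniformly small.

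The main obstacle is making the second paragraph rigorous in the presence of the lack of absolute convergence: $U_t(x_0)-x_0$ is an infinite sum of nonzero but square-summable displacements whose value depends on the summation order, and one must verify both that the symmetric truncation yields a genuinely drift-free limit and that the L\'evy / quadratic-variation structure survives the passage $n\to\infty$. The cleanest route is to carry out these estimates on the backward process $\tilde F$ and its explicit finite approximants $G^{(n)}$ from Section \ref{construction:amanda}, using the slit estimates of Lemma \ref{lemma:slit_estimates} and the moment bounds of Lemma \ref{lem:prelimto_all_powerful} --- in particular the bound $\ev\bigl[|F_1^{-1}(x)-x|\bigr]=M<\infty$ already used in the proof of Theorem \ref{thm:martingale} --- to control the truncation corrections uniformly in $n$, and to send $n\to\infty$ before $\delta\to0$.
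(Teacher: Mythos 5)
Your proof is correct and follows essentially the same route as the paper's: decompose $F_t^{-1}(a)$ into a martingale plus a principal-value drift, observe that the drift vanishes on $\partial\HH$ by symmetry of the truncation, compute the quadratic-variation rate $\int_{-\infty}^{\infty}\bigl(\sqrt{x^2+1}-|x|\bigr)^2\,dx=\tfrac43$, and conclude by an invariance principle for martingales with vanishing jumps. Your additional observation that the boundary flow is in fact a symmetric L\'evy process (so that the predictable quadratic variation is deterministic and the FCLT applies immediately) is a clean way of packaging what the paper calls ``standard tightness computations together with L\'evy's characterization,'' and your explicit handling of the branch of $\tilde s_x^{-1}$ on the real line and of the non-absolutely-convergent centering is, if anything, more careful than the paper's.
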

\begin{proof}
First observe that
\begin{align*}
F^{-1}_t(z) &= z + \lim_{n\to\infty} \sum_{(s,x)\in A_{t,n}} \left[ \tilde s_x^{-1}(F^{-1}_{s-}(z)) - F_{s-}^{-1}(z) \right] \\
&= z+ M_t(z) + \lim_{n \to \infty} \int_{0}^t \int_{-n}^n  \left[ \tilde{s}_x^{-1}(\tilde F_{s}^{-1}(z)) - \tilde F_{s}^{-1}(z) \right] dx ds  
\end{align*}
where $\left ( M_t(z) \right )$ is a zero-mean martingale with
\begin{align*}
\ev \left [ |M_t(z)|^2 \right ] 
&= \ev \left [ \int_0^t \int_{-\infty}^\infty \left| \tilde{s}_x^{-1}(\tilde F^{-1}_{s}(z)) - \tilde F_{s}^{-1}(z) \right|^2 dx ds \right ].
\end{align*}
Now
\[
\tilde s_x^{-1}(z) = \sqrt{(z-x)^2+1}-x.
\]
In particular, $\tilde s_x^{-1}$ (and hence $F_t^{-1}$) maps $\mathbb{R}$ into itself. Hence, if $a \in \mathbb{R}$, then 
\[
\lim_{n \to \infty} \int_{0}^t \int_{-n}^n  \left[ \tilde{s}_x^{-1}(\tilde F_{s}^{-1}(a)) - \tilde F_{s}^{-1}(a) \right] dx ds  = 0
\]
and
\[
\ev \left [ \int_0^t \int_{-\infty}^\infty \left| \tilde{s}_x^{-1}(\tilde F^{-1}_{s}(a)) - \tilde F_{s}^{-1}(a) \right|^2 dx ds \right ]
= t \int_{-\infty}^\infty \left| \tilde{s}_x^{-1}(0) \right|^2 dx = 4t/3.\]
Hence
\[
\ev \left [ |(F^\delta_{\delta^{-1}t})^{-1}(a) - a|^2 \right ] = \delta^2 \cdot 4 (\delta^{-2} t )/3 = 4t/3. 
\]
The result follows from standard tightness computations together with L\'evy's characterization of Brownian motion.
\end{proof}

\begin{remark} We can furthermore observe that there exists some constant $C$ such that 
\[
\int_{-\infty}^\infty \left| \tilde{s}_x^{-1}(a+h) - \tilde{s}_x^{-1}(a) \right|^2 dx \leq C h^2. 
\] 
Using this condition, one can show that if $a \neq b$ then the the scaling limits of $(F^\delta_{\delta^{-1}t})^{-1}(a)$ and $(F^\delta_{\delta^{-1}t})^{-1}(b)$ evolve as independent Brownian motions until they meet, at which point they coalesce. As any pair of independent Brownian motions eventually meet, this provides an alternative proof of Theorem \ref{thm:finitetrees}, that the harmonic measure carried by any interval must eventually be equal to zero. It is possible to prove the stronger result that $(F^\delta_{\delta^{-1}t})^{-1}$ converges to the Brownian web: 
it is straightforward to show that the inverse map $F_t^{-1}$, restricted to the real line, corresponds to the coalescing stochastic flow defined in \cite{ellis2011coalescing}.  (Indeed, the motivation behind the construction of the coalescing stochastic flow in \cite{ellis2011coalescing} was a conjectured connection with an HL$(0)$ cluster growing on the line). It is shown there that the coalescing stochastic flow converges to the Brownian web under an appropriate rescaling.  
\end{remark}

\subsection{Fingers}

As the cluster at time $t$, $K_t = F_t (\partial \mathbb{H})$, is a forest, for any point $z \in K_t$ there is a unique path in $K_t$ which connects $z$ to $\partial \mathbb{H}$. For $s < t$, let $K_{s,t}(z)$ denote the unique point on this path which is in $K_s$ and furthest from $\partial \mathbb{H}$ (with distance measured along the path). (This path may consist of a single point if $z \in \partial{H}$). 

For each $s<t$, let $F^{\mathbb{R}}_{s,t} : \mathbb{R} \to \mathbb{R}$ be the solution to
\[
F^{\mathbb{R}}_{s,t}(a) = a + \lim_{n\to\infty} \sum_{(u,x)\in A_{t,n} \setminus A_{s,n}} \left[ F^{\mathbb{R}}_{s,u-}(\Re \tilde s_x(a)) - F_{s,u-}^{\mathbb{R}}(a) \right].
\]
Existence and uniqueness of this process follows in exactly the same way as $F_t$. {This is a real valued process, that as the following Lemma establishes, identifies the points on $\partial\BH$ being sent to the roots of particles on a chosen tree}.

\begin{lemma} \label{lemma:finger}
For any $a \in \mathbb{R}$ and $s<t$,
\[
K_{s,t}(F_t(a)) = F_s(F^{\mathbb{R}}_{s,t}(a)).
\]
\end{lemma}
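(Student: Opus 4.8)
The plan is to prove the identity by the same kind of approximation scheme used throughout Section~\ref{construction:amanda}: first establish it for the finite-truncation processes $G^{(n)}$ (where everything reduces to a finite composition of slit maps and can be checked by induction on the arrival times), and then pass to the limit using the mean-square convergence from Theorem~\ref{thm:constworks}. So first I would set up the truncated analogues: for each $n$, let $G^{(n)}_{s,t}$ denote the composition of the slit maps $\tilde s_x$ with $(u,x)\in \tilde A_{t,n}\setminus \tilde A_{s,n}$, in order of arrival time, and let $G^{\BR,(n)}_{s,t}$ be the corresponding real-line process built from the maps $a\mapsto \Re\,\tilde s_x(a)$. Since $\tilde s_x^{-1}$ maps $\BR$ into $\BR$ (as noted in the proof of Theorem~\ref{thm:harmbm}, and similarly $\tilde s_x$ does on the complement of the slit), the key geometric observation is this: for a single slit map $\tilde s_x$, the root of the newly added slit $\tilde s_x([0,i])$ in the half-plane is the real point $\Re\,\tilde s_x$ evaluated appropriately, and the path from any point of the new cluster back to $\partial\BH$ passes through that root.

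Next I would run the induction. At the first arrival time $u_1\in(s,t]$ with position $x_1$, the cluster $K^{(n)}_{u_1,t}(G^{(n)}_t(a))$ is, for $a$ not at the slit, just $G^{(n)}_s$ applied to the point on $\partial\BH$ that $\tilde s_{x_1}$ (and then the later maps) send to the root; that point is exactly $G^{\BR,(n)}_{s,u_1}(a)$ by definition of the real process, using that $\Re \tilde s_{x}$ records the horizontal coordinate of the slit root. Composing the later arrivals and using the cocycle/semigroup structure $G^{(n)}_{s,t}=G^{(n)}_{u_1,t}\circ \tilde s_{x_1}$ together with the matching identity for $G^{\BR,(n)}$, the inductive step closes. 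This gives
\[
K^{(n)}_{s,t}(G^{(n)}_t(a)) = G^{(n)}_s\big(G^{\BR,(n)}_{s,t}(a)\big)
\]
for every $n$, every $a\in\BR$, and a.e.\ realization.

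Finally I would pass to the limit $n\to\infty$. On the right-hand side, $G^{(n)}_s\to \tilde F_s$ and $G^{\BR,(n)}_{s,t}\to F^{\BR}_{s,t}$ in the mean-square-on-compacts sense (Theorem~\ref{thm:constworks} and the analogous statement for the real process, whose existence and uniqueness are asserted just before the Lemma to follow "in exactly the same way"), and $\tilde F_s$ is continuous, so $G^{(n)}_s(G^{\BR,(n)}_{s,t}(a))\to \tilde F_s(F^{\BR}_{s,t}(a))$ along a subsequence almost surely; combined with continuity of $\tilde F_s$ this identifies the limit. On the left-hand side one needs that $K^{(n)}_{s,t}(z)\to K_{s,t}(z)$, i.e.\ that the "projection onto the time-$s$ cluster along the unique path to $\partial\BH$" is continuous under the convergence $K^{(n)}\to K$; here one uses that the limiting cluster is a.s.\ a locally finite forest (Lemma~\ref{lem:bound_diff}, Corollary~\ref{cor:finitetreesdiam}) so that for fixed $a$ and large $n$ the relevant finite sub-tree has stabilized. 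Then restate the conclusion for $\tilde F$ and transfer to $F$ via the time-reversal bijection of Section~\ref{sec:reverse}, exactly as in the proof of Lemma~\ref{lem:bound_diff}.

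The main obstacle I expect is the left-hand side limiting argument: making precise that the map $z\mapsto K_{s,t}(z)$ (the last point on the path from $z$ to $\partial\BH$ lying in $K_s$) behaves continuously under the approximation, since a priori the path structure of $K^{(n)}_t$ could differ from that of $K_t$ near the point of interest. The clean way around this is to fix $a\in\BR$ first, use locality of the construction (strong spatial correlation decay, as invoked in Section~\ref{sec:size}) to argue that only finitely many slits — all within a random but a.s.-finite window $[-N,N]$ — affect the path from $F_t(a)$ to $\partial\BH$, and then for $n\ge N$ the truncated identity is already the exact identity; no genuine limiting of the combinatorial path structure is needed. I would structure the write-up to exploit this so that the only limits taken are the already-established analytic ones for $G^{(n)}_s$ and $G^{\BR,(n)}_{s,t}$.
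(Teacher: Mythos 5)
Your proposal follows essentially the same route as the paper: define the truncated processes $K^n_{s,t}$ and $F^{\mathbb{R},n}_{s,t}$, prove the identity for each $n$ by induction over the (a.s.\ finitely many) arrivals in $A_{t,n}\setminus A_{s,n}$, and let $n\to\infty$. The paper's write-up is terser (it peels off the last arrival via $F^n_t=F^n_{t_{k-1}}\circ\tilde s_{x_k}$ and dismisses the limit in one line, whereas you peel off the first arrival and spend more care on why the path structure stabilizes), but the argument is the same.
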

\begin{proof}
For each $n \in \mathbb{N}$, let $K_t^n = F_t^n (\partial \mathbb{H})$ and define $K^n_{s,t}$ analogously to $K_{s,t}$, but with respect to $F^n$. Also, let
\[
F^{\mathbb{R},n}_{s,t}(a) = a + \sum_{(u,x)\in A_{t,n} \setminus A_{s,n}} \left[ F^{\mathbb{R},n}_{s,u-}(\Re \tilde s_x(a)) - F_{s,u-}^{\mathbb{R},n}(a) \right].
\]
With probability 1, there is a finite number $k$ such that $A_{t,n} \setminus A_{s,n}=\{(t_1,x_1), \dots, (t_k, s_k)\}$, where $s < t_1 < \cdots < t_k \leq t$. The identity 
\[
K^n_{s,t}(F^n_t(a)) = F^n_s(F^{\mathbb{R},n}_{s,t}(a)).
\]
follows by straightforward induction on $k$, using the fact that $F^n_t(z)=F^n_{t_{k-1}}(\tilde{s}_{x_k}(z))$. Letting $n \to \infty$ gives the required result.
\end{proof}

For $\delta > 0$, define $K^{\delta}_{s,t}$ and $F^{\mathbb{R},\delta}_{s,t}(a)$ using the same scaling as in \eqref{eq:scale}.

\begin{lemma} \label{lemma:finbm} As $\delta \to 0$, $F^{\mathbb{R},\delta}_{\delta^{-1}s,\delta^{-1}t}(a) \to 2B_{t-s}(a)/\sqrt{3}$ in distribution, where $B_t(a)$ is a Brownian motion starting from $a$.
\end{lemma}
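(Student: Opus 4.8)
The plan is to recognise that, although the equation defining $F^{\mathbb{R}}_{s,t}$ is of ``forward'' type (the outer map $F^{\mathbb{R}}_{s,u-}$ at each jump is random), reading it with $t$ fixed and $s$ decreasing turns it into a ``backward'' equation with a \emph{deterministic} jump map, which can then be treated exactly as the inverse harmonic measure flow in Theorem~\ref{thm:harmbm}. From the construction of $F^{\mathbb{R}}$, a Poisson point at time $u$ is composed on the inside, so lowering the left endpoint of the time window appends maps on the outside: for $(v,y)\in A$ and $\sigma$ slightly below $v$ one has $F^{\mathbb{R}}_{\sigma,t}=\Re\tilde s_y\circ F^{\mathbb{R}}_{v,t}$. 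Fix $0\le s<t$, put $T=\delta^{-2}t$, and set $Z_r:=F^{\mathbb{R}}_{T-r,\,T}(\delta^{-1}a)$ for $r\in[0,\delta^{-2}(t-s)]$; then $Z_0=\delta^{-1}a$, $Z_{\delta^{-2}(t-s)}=F^{\mathbb{R}}_{\delta^{-2}s,\,\delta^{-2}t}(\delta^{-1}a)$, and the scaling \eqref{eq:scale} gives $F^{\mathbb{R},\delta}_{\delta^{-1}s,\delta^{-1}t}(a)=\delta\,Z_{\delta^{-2}(t-s)}$. As $r$ increases $Z$ jumps, at the Poisson atom sitting at reversed time $r$, from $Z_{r-}$ to $\Re\tilde s_y(Z_{r-})$, and $Z_r$ depends only on atoms at reversed time $<r$; since the Poisson process is reversible, $Z$ therefore has the same law as the unique solution of the backward-type equation
\[
Z_r=\delta^{-1}a+\lim_{n\to\infty}\sum_{(r',y)\in\hat A_{r,n}}\big[\Re\tilde s_y(Z_{r'-})-Z_{r'-}\big],
\]
driven by a fresh intensity-one Poisson process $\hat A$. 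Existence, uniqueness, and the a priori bound $\ev\big[\sup_{r'\le r}|Z_{r'}-\delta^{-1}a|^2\big]<\infty$ follow as in Section~\ref{construction:amanda} and Lemma~\ref{lem:prelimto_all_powerful}, since $\Re\tilde s_y$ maps $\mathbb{R}$ into $\mathbb{R}$ and satisfies the same kind of estimates as the slit map.

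The elementary input is the displacement $g(w):=\Re\tilde s(w)-w=\mathrm{sign}(w)\sqrt{w^2-1}\,\mathbbm{1}_{\{|w|>1\}}-w$, so that $\Re\tilde s_y(z)-z=g(z-y)$ for $z\in\mathbb{R}$. One checks in a line that $g$ is odd, $|g|\le1$, $\int_{\mathbb{R}}|g(w)|^2\,dw=4/3$, and $\lim_{n\to\infty}\int_{-n}^n g(w)\,dw=0$, with $\big|\int_{-n}^n g(z-y)\,dy\big|$ bounded uniformly in $n$ --- the exact analogues of Lemma~\ref{lemma:slit_estimates}. Decompose $Z_r=\delta^{-1}a+M_r+D_r$ into the compensated martingale and the compensator. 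Since Lebesgue measure is translation invariant, $\int_{\mathbb{R}}|g(Z_{r'}-y)|^2\,dy=4/3$ and $\lim_n\int_{-n}^n g(Z_{r'}-y)\,dy=0$ \emph{whatever the value of $Z_{r'}$}; hence, passing the limit through the expectation using the a priori bound exactly as in the proof of Theorem~\ref{thm:harmbm}, the compensator vanishes, $D_r\equiv0$, so that $Z_r=\delta^{-1}a+M_r$ where $M$ is a martingale with \emph{deterministic} predictable quadratic variation $\langle M\rangle_r=(4/3)r$ and jumps bounded by $1$.

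Consequently $F^{\mathbb{R},\delta}_{\delta^{-1}s,\delta^{-1}t}(a)=a+\delta M_{\delta^{-2}(t-s)}$, and the rescaled martingale $r\mapsto\delta M_{\delta^{-2}r}$ has deterministic predictable quadratic variation $(4/3)r$ and maximal jump at most $\delta\to0$. A standard martingale central limit theorem then gives convergence in distribution to $(2/\sqrt3)W$ for a standard Brownian motion $W$; in particular $F^{\mathbb{R},\delta}_{\delta^{-1}s,\delta^{-1}t}(a)\to a+(2/\sqrt3)W_{t-s}=2B_{t-s}(a)/\sqrt3$ in distribution, as claimed.

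I expect the only delicate point to be the one already flagged in the paper: the sum defining $F^{\mathbb{R}}$ (equivalently $Z$) is merely conditionally convergent, so making the martingale--compensator decomposition and the identity $D_r\equiv0$ rigorous requires the a priori $L^2$ estimate for $Z$ together with a careful interchange of $\lim_n$ and the expectation, carried out just as for $F_t$ in Section~\ref{construction:amanda}; the time-reversal identification of the law of $Z$ and the final invariance principle are then routine. One could also bypass the reversal by noting that $F^{\mathbb{R}}_{s,t}\big|_{\mathbb{R}}$ is a generalised inverse of the increment $(F_t^{-1}\circ F_s)\big|_{\mathbb{R}}$ of the inverse harmonic measure flow of Theorem~\ref{thm:harmbm} and invoking self-duality of the limiting Brownian flow.
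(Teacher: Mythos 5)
Your proposal is correct and follows essentially the same route as the paper's (deliberately brief) proof: time-reverse $F^{\mathbb{R}}$ into a backward-type equation driven by the deterministic jump map $\Re\tilde s_x$, observe that the drift vanishes and the martingale part has deterministic quadratic variation $4(t-s)/3$, and conclude by an invariance principle (the paper cites tightness plus L\'evy's characterization, you cite the equivalent martingale CLT with bounded jumps). You supply details the paper only sketches --- the explicit displacement $g(w)=\Re\tilde s(w)-w$, its oddness and $L^2$ norm, and the composition-order/Poisson-reversibility bookkeeping --- all of which check out.
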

\begin{proof}
The proof uses similar ideas to those in the rest of the paper, so only an outline is provided. Define a time-reversed version of $\tilde{F}^{\mathbb{R}}$ as was done for $F$. Then
\begin{align*}
\tilde F^{\mathbb{R}}_{s,t}(a) &= a + \lim_{n\to\infty} \sum_{(u,x)\in A_{t,n}\setminus A_{s,n}} \left[ \Re \tilde s_x( \tilde F^{\mathbb{R}}_{s, u-}(a)) - \tilde F_{s,u-}^{\mathbb{R}}(a) \right] \\
&= a+ M_{s,t}(a)   
\end{align*}
where $\left ( M_{s,t}(a) \right )$ is a zero-mean martingale with
\begin{align*}
\ev \left [ M_{s,t}(a)^2 \right ] 
&= (t-s) \int_{-\infty}^\infty \left| \Re \tilde{s}_x(0)  \right|^2 dx = 4(t-s)/3 .
\end{align*}
The remainder of the proof is similar to Theorem \ref{thm:harmbm}. 
\end{proof}

Finally, we are in a position to prove that when appropriately scaled, the scaling limit of SHL($0$) has Hausdorff dimension $3/2$. This is an immediate consequence of the following result which shows that the finger connecting a typical point on $K_{t}$ to $\partial \mathbb{H}$ satisfies the same 2:1 scaling law as 1-dimensional Brownian motion. Here, the imaginary part of the finger is identified with time, and the real part with space. 

\begin{theorem} \label{thm:scalinglimit}
Fix $t>0$, and let $B_t$ be a standard Brownian motion starting from 0. As $\delta \to 0$ 
\begin{align*}
\delta^2 \Im K_{\delta^{-2}s, \delta^{-2}t}(F_{\delta^{-2}t}(0)) &\to \pi s /2 \\
\delta \Re K_{\delta^{-2}s, \delta^{-2}t}(F_{\delta^{-2}t}(0)) &\to 2B_{t-s} /\sqrt{3}, 
\end{align*}
where convergence is in distribution in the Skorokhod topology.
\end{theorem}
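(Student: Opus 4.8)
The plan is to combine the two scaling results already assembled in this section: Theorem \ref{thm:scaling} controls the imaginary coordinate, and Lemma \ref{lemma:finbm} (together with Lemma \ref{lemma:finger}) controls the real coordinate. First I would unwind the definitions. By Lemma \ref{lemma:finger}, $K_{s,t}(F_t(0)) = F_s(F^{\mathbb{R}}_{s,t}(0))$, so after applying the $\delta$-scaling \eqref{eq:scale} (with the parabolic time scaling $t \mapsto \delta^{-2}t$ built in, matching the $2{:}1$ law) we get the identity
\[
K^{\delta}_{\delta^{-2}s,\delta^{-2}t}(F^\delta_{\delta^{-2}t}(0)) = F^{\delta}_{\delta^{-2}s}\!\left(F^{\mathbb{R},\delta}_{\delta^{-2}s,\delta^{-2}t}(0)\right),
\]
where $F^{\mathbb{R},\delta}$ uses the scaling of Lemma \ref{lemma:finbm}. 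So the object to study is $F^{\delta}_{\delta^{-2}s}(W^\delta)$ where $W^\delta := F^{\mathbb{R},\delta}_{\delta^{-2}s,\delta^{-2}t}(0)$ is a real point, and I want to show its imaginary part converges to $\pi s/2$ and its real part to $2B_{t-s}/\sqrt 3$.

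For the imaginary part, note $W^\delta \in \mathbb{R}$ always, so $\Im K^{\delta} = \Im F^{\delta}_{\delta^{-2}s}(W^\delta)$. By translation invariance in $\mathbb{R}$, the distribution of $\Im F^{\delta}_{\delta^{-2}s}(x)$ does not depend on $x \in \mathbb R$; combining this with the uniform-in-$x$ estimate from (the proof of) Lemma \ref{lem:prelimto_all_powerful} — or directly with Theorem \ref{thm:scaling} applied at $z=0$ after a shift — gives $\Im F^{\delta}_{\delta^{-2}s}(x) \to \pi s/2$ in probability, uniformly in the starting point. Since $W^\delta$ is independent of the increments of $F$ on the time interval $[\delta^{-2}s,\infty)$ after time $\delta^{-2}t$... wait — one must be careful about the adaptedness direction. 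Actually $F^{\mathbb{R},\delta}_{\delta^{-2}s,\delta^{-2}t}$ is built from the Poisson points in the time window $(\delta^{-2}s,\delta^{-2}t]$, whereas $F^\delta_{\delta^{-2}s}$ is built from points in $(0,\delta^{-2}s]$; by the independence of the Poisson process on disjoint time intervals, $W^\delta$ and $F^\delta_{\delta^{-2}s}$ are independent. Hence conditionally on $W^\delta$, $\Im F^\delta_{\delta^{-2}s}(W^\delta)$ has the same law as $\Im F^\delta_{\delta^{-2}s}(0)$, which converges to $\pi s/2$ in probability, giving the first display.

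For the real part I would argue that $\Re F^\delta_{\delta^{-2}s}(W^\delta)$ is close to $W^\delta$ itself. Indeed $\tilde s_x$ fixes $\mathbb{R}$, and on $\mathbb R$ the forward map $F^{\mathbb R}$ moves points only by the accumulated slit drifts; the key point is that the horizontal displacement $\Re F^\delta_{\delta^{-2}s}(a) - a$ of a boundary point over the time window $[0,\delta^{-2}s]$ is, after scaling, of order $\delta$ times the martingale computed in Theorem \ref{thm:harmbm} (with $L^2$-norm bounded by a constant independent of $a$), hence tends to $0$ in probability — again uniformly in the starting point $a$, and in particular at $a = W^\delta$ by the same independence. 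Combined with Lemma \ref{lemma:finbm}, which gives $W^\delta \to 2B_{t-s}(0)/\sqrt 3$ in distribution, Slutsky's lemma yields $\Re K^\delta_{\delta^{-2}s,\delta^{-2}t}(F^\delta_{\delta^{-2}t}(0)) \to 2B_{t-s}/\sqrt 3$. Promoting the marginal (or finite-dimensional, in $s$) convergence to convergence in the Skorokhod topology requires tightness of the processes $s \mapsto F^{\mathbb R,\delta}_{\delta^{-2}s,\delta^{-2}t}$ and of $s\mapsto \Im F^\delta_{\delta^{-2}s}(W^\delta)$: the former follows from the martingale increment bound $\ev[|\Re\tilde s_x(a+h)-\Re\tilde s_x(a)|^2]\,dx \leq C h^2$ noted in the Remark after Theorem \ref{thm:harmbm} via a Kolmogorov-type criterion, and the latter from the $L^2$ bound \eqref{eq:concval} in Lemma \ref{lem:prelimto_all_powerful}. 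The main obstacle I anticipate is this last point — handling the Skorokhod-topology convergence of the two coordinate processes jointly, and in particular verifying that the coalescence/stopping structure in $F^{\mathbb R}$ does not spoil tightness; but since for a single fixed starting point $0$ there is no coalescence to worry about, the martingale bounds above should suffice, and I would present this as a routine tightness-plus-identification-of-limit argument.
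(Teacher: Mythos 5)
Your overall route is the same as the paper's: use Lemma \ref{lemma:finger} to write the finger as $F_{\delta^{-2}s}\bigl(F^{\mathbb{R}}_{\delta^{-2}s,\delta^{-2}t}(0)\bigr)$, feed the real point $W^\delta$ into Lemma \ref{lemma:finbm} for the Brownian limit, exploit independence of the Poisson points on the disjoint time windows $(0,\delta^{-2}s]$ and $(\delta^{-2}s,\delta^{-2}t]$ together with translation invariance to condition on $W^\delta$, and kill the remaining error with the scaling estimates of Section \ref{sec:scaling}. Your imaginary-part argument and your (more careful than the paper's) remarks on tightness in the Skorokhod topology are fine. One notational caution: with the convention \eqref{eq:scale}, unscaled time $\delta^{-2}s$ corresponds to scaled time $\delta^{-1}s$, so the identity should read $K_{\delta^{-2}s,\delta^{-2}t}(F_{\delta^{-2}t}(0))=\delta^{-1}F^{\delta}_{\delta^{-1}s}\bigl(F^{\mathbb{R},\delta}_{\delta^{-1}s,\delta^{-1}t}(0)\bigr)$; your $\delta^{-2}$ subscripts on the $\delta$-scaled objects would put you at unscaled time $\delta^{-3}s$.

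The one genuine gap is in the real-part estimate. You claim that $\Re F^{\delta}_{\cdot}(a)-a$ is ``of order $\delta$ times [a] martingale \ldots with $L^2$-norm bounded by a constant independent of $a$, hence tends to $0$.'' For a \emph{real} starting point $a$ the crude bracket bound \eqref{eq:l2sx} gives $\ev|M_{\delta^{-2}s}(a)|^2\le C\delta^{-2}s/(1+\Im a)=C\delta^{-2}s$, so after multiplying by $\delta$ you only get $O_P(1)$, not $o_P(1)$ --- and unlike the imaginary coordinate, the real coordinate has no spare factor of $\delta$ to absorb this. (The martingale of Theorem \ref{thm:harmbm} that you invoke concerns the \emph{inverse} map restricted to $\mathbb{R}$ and also has variance growing linearly in time, so it does not supply the constant bound either.) The correct fix is the one already used for the imaginary part: the boundary case of Theorem \ref{thm:scaling}, whose proof bootstraps the linear growth $\Im F^\delta_u(0)\gtrsim u$ to improve the quadratic-variation density from $O(1)$ to $O\bigl(1/(1+\delta^{-1}\Im F^\delta_u)\bigr)$, yielding $\sup_{u\le\delta^{-1}T}|F^\delta_u(a)-(a+i\pi u/2)|\to 0$ in probability for real $a$. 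Citing that statement (conditionally on $W^\delta$, via the independence and translation invariance you already set up) closes the gap and gives $\Re F^\delta_{\delta^{-1}s}(W^\delta)-W^\delta\to 0$; the rest of your argument then goes through.
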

\begin{proof}
By Lemma \ref{lemma:finger},
\begin{align*}
K_{\delta^{-2}s,\delta^{-2}t}(F_{\delta^{-2}t}(0)) 
&= F_{\delta^{-2}s}(F^{\mathbb{R}}_{\delta^{-2}s,\delta^{-2}t}(0)) \\
&= \delta^{-1} F^\delta_{\delta^{-1}s} (F^{\mathbb{R},\delta}_{\delta^{-1}s,\delta^{-1}t}(0)) \\
&= \delta^{-1} F^{\mathbb{R},\delta}_{\delta^{-1}s,\delta^{-1}t}(0) + i \delta^{-2} \pi s/2 \\
& \ \ + \delta^{-1}  \left ( F^\delta_{\delta^{-1}s} (F^{\mathbb{R},\delta}_{\delta^{-1}s,\delta^{-1}t}(0)) - F^{\mathbb{R},\delta}_{\delta^{-1}s,\delta^{-1}t}(0) - i \delta^{-1} \pi s/2\right ).
\end{align*}
By Theorem 7.1 and the fact that 
$$
\ev\Bigg[\ev\Big[\delta M^\delta_{\delta^{-1}s} (F^{\mathbb{R},\delta}_{\delta^{-1}s,\delta^{-1}t}(0))\Big| F^{\mathbb{R},\delta}_{\delta^{-1}s,\delta^{-1}t}(0))\Big]\Bigg]\le 
\ev\Bigg[\frac{C\delta s}{1+\delta^{-1}\Im F^{\mathbb{R},\delta}_{\delta^{-1}s,\delta^{-1}t}(0))}\Bigg]\le C\delta s
,$$
we obtain that 
$$
\lim_{\delta\rightarrow0}\left ( F^\delta_{\delta^{-1}s} (F^{\mathbb{R},\delta}_{\delta^{-1}s,\delta^{-1}t}(0)) - F^{\mathbb{R},\delta}_{\delta^{-1}s,\delta^{-1}t}(0) - i \delta^{-1} \pi s/2\right )=0
,$$ in probability.

Then the result follows by taking real and imaginary parts of  $$\delta^{-1} F^{\mathbb{R},\delta}_{\delta^{-1}s,\delta^{-1}t}(0) + i \delta^{-2} \pi s/2$$ and using Lemma \ref{lemma:finbm}.
\end{proof}
$~$

\section{Number of particles in a tree} \label{sec:growthrate}
In this section we prove that a tree of height $t$ contains an order of $t^{3/2}$ particles. 

Let $a,b\in\partial\BH$ and define the interval $I_t=[F_t^{-1}(a),F_t^{-1}(b)]$. Now consider the process
$$
M_t=-\int_0^t\|I_s\|ds+\lim_{n\rightarrow\infty}\sum_{(u,x)\in A_{t,n}}\ind_{\big\{x\in [F_u^{-1}(a),F_u^{-1}(b)]\big\}}.
$$
\begin{lemma}\label{lem:numberofparticles}
$M_t$ is a mean zero martingale with respect to $\mathcal{F}_t$. 
\end{lemma}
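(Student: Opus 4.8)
The plan is to recognize $M_t$ as the compensated sum associated with the point process counting the particles that attach to the interval $I_u=[F_u^{-1}(a),F_u^{-1}(b)]$, and to verify the martingale property directly from the Poisson structure together with the adaptedness property (\ref{item:adapt}). First I would fix a truncation level $n$ and work with the process
\[
M_t^{(n)}=-\int_0^t\|I_s\|\,ds+\sum_{(u,x)\in A_{t,n}}\ind_{\{x\in[F_u^{-1}(a),F_u^{-1}(b)]\}}.
\]
Strictly speaking one should replace $\|I_s\|$ by $\|I_s\cap[-n,n]\|$ in the compensator; since $\|I_s\|<\infty$ a.s.\ (the interval is the image of $[a,b]$ under the conformal $F_s^{-1}$) the difference vanishes as $n\to\infty$, so this is a harmless bookkeeping point that I would address at the end. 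The key structural fact is that, conditionally on $\CF_{0,u-}$, the point $F_{u-}^{-1}$ is determined (by adaptedness the map $F_{u-}^{-1}$ is $\CF_{0,u-}$-measurable), so whether a Poisson atom at $(u,x)$ lands in $[F_{u-}^{-1}(a),F_{u-}^{-1}(b)]$ is a predictable event; and the indicator under $F_u^{-1}$ versus $F_{u-}^{-1}$ agree at the arrival time up to a null set since adding one slit changes $F^{-1}$ only at that instant.

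Next I would invoke the Poisson martingale / Lévy–Itô compensation formula: for a rate-$1$ Poisson random measure on $[0,\infty)\times\R$ and a predictable, integrable integrand $\psi_u(x)$, the process
\[
\sum_{(u,x)\in A_{t,n}}\psi_u(x)-\int_0^t\!\!\int_{-n}^n \psi_u(x)\,dx\,du
\]
is a mean-zero $\CF_t$-martingale. Taking $\psi_u(x)=\ind_{\{x\in[F_{u-}^{-1}(a),F_{u-}^{-1}(b)]\}}$ gives exactly $M_t^{(n)}$, since
\[
\int_{-n}^n \ind_{\{x\in[F_{u-}^{-1}(a),F_{u-}^{-1}(b)]\}}\,dx=\|I_{u-}\cap[-n,n]\|=\|I_u\cap[-n,n]\|
\]
for Lebesgue-a.e.\ $u$. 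The integrability needed to apply the formula on $[0,t]$ is $\ev\int_0^t\|I_s\|\,ds<\infty$; this follows because $\|I_s\|=F_s^{-1}(b)-F_s^{-1}(a)$ and, by Lemma \ref{lem:prelimto_all_powerful} applied to the backward process (or via Theorem \ref{thm:martingale}, which says $H_{[a,b]}(s)$ is a positive martingale with constant expectation $b-a$), we have $\ev\|I_s\|=b-a$ for every $s$, so $\ev\int_0^t\|I_s\|\,ds=(b-a)t<\infty$. Mean zero at each fixed $t$ is then immediate, and the martingale property over $0\le r<t$ follows from the independence of $P$ restricted to $(r,t]\times\R$ from $\CF_r$ combined with the compensation identity on the interval $(r,t]$.

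Finally I would pass to the limit $n\to\infty$: $M_t^{(n)}\to M_t$ a.s.\ and the convergence of the integral term is monotone-dominated since the integrands increase to $\|I_s\|$ which is integrable in $(s,\omega)$ on $[0,t]\times\Omega$ by the bound just established; uniform integrability of $\{M_t^{(n)}\}_n$ (which I would get from an $L^2$ bound: the predictable quadratic variation of $M^{(n)}$ is $\int_0^t\|I_s\cap[-n,n]\|\,ds\le (b-a)t$, bounded uniformly in $n$) lets me transfer the martingale property to $M_t$. The main obstacle, and the point deserving the most care, is the measurability/predictability bookkeeping at the jump times — making precise that the integrand $\ind_{\{x\in[F_{u-}^{-1}(a),F_{u-}^{-1}(b)]\}}$ is genuinely predictable with respect to $(\CF_t)$ and that replacing $F_u^{-1}$ by $F_{u-}^{-1}$ inside the indicator changes the sum by a countable (null) set of instants — rather than any analytic estimate, which here is soft.
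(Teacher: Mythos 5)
Your proof is correct, but it takes a different and in fact more careful route than the paper's. The paper argues infinitesimally: it asserts that $M_t$ is Markov (by independence of Poisson increments and adaptedness of $F$), reduces the claim to showing $\lim_{\Delta t\to 0}\frac{1}{\Delta t}\ev[M_{t+\Delta t}-M_t]=0$, and then computes that the number of arrivals in $(t,t+\Delta t]$ landing in the preimage interval is Poisson with mean $\|I_t\|\Delta t$, exactly cancelling the drift $-\|I_t\|\Delta t$. That is a generator-style computation which leaves the integrability and conditioning bookkeeping implicit. You instead invoke the compensation formula for a Poisson random measure with the predictable integrand $\ind_{\{x\in[F_{u-}^{-1}(a),F_{u-}^{-1}(b)]\}}$, truncate at level $n$, and pass to the limit using the uniform $L^2$ bound coming from $\ev\left[\int_0^t\|I_s\|\,ds\right]=(b-a)t$ (which you correctly source from Theorem \ref{thm:martingale}); this makes the mean-zero property and the conditional martingale property rigorous in one stroke. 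Two small points to tighten: first, the identity of the indicators at a jump time is not merely an a.e.\ statement --- it holds exactly, because $\tilde s_x^{-1}$ preserves which side of $x$ a real point lies on, so $x\in[F_{u-}^{-1}(a),F_{u-}^{-1}(b)]$ if and only if $x\in[F_{u}^{-1}(a),F_{u}^{-1}(b)]$; second, the pointwise bound $\int_0^t\|I_s\cap[-n,n]\|\,ds\le(b-a)t$ is false in general since $\|I_s\|$ is a martingale and may exceed $b-a$, but the bound you actually need is on the expectation of the predictable quadratic variation, which does equal at most $(b-a)t$ uniformly in $n$ and suffices for uniform integrability. With those two clarifications your argument is complete and arguably supplies detail the paper's proof elides.
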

\begin{proof}
By the independence of $A_{t+\Delta t}\setminus A_t$ and $A_t$ and since $F_t$ is adapted, $M_t$ is a Markov chain. Thus it is enough to prove that 
$$
\lim_{\Delta t\rightarrow 0}\frac{1}{\Delta t}\ev\left[M_{t+\Delta t}-M_t\right]=0
.$$
Indeed 
$$
\ev\left[M_{t+\Delta t}-M_t\right]=-\int_t^{t+\Delta t}\|I_s\|ds+\ev\left[\lim_{n\rightarrow\infty}\sum_{(u,x)\in A_{t+\Delta t,n}\setminus A_{t,n}}\ind_{\big\{x\in [F_u^{-1}(a),F_u^{-1}(b)]\big\}}\right]
.$$
First we have that 
$$
\lim_{\Delta t\rightarrow 0}\frac{1}{\Delta t}-\int_t^{t+\Delta t}\|I_s\|ds=-\|I_t\|
.$$
On the other hand for a small $\Delta t$
$$
\lim_{n\rightarrow\infty}\sum_{(u,x)\in A_{t+\Delta t,n}\setminus A_{t,n}}\ind_{\big\{x\in [F_u^{-1}(a),F_u^{-1}(b)]\big\}}
,$$
converges to a Poisson random variable with mean $\| I_t\|\Delta t$ proving the statement.
\end{proof}
In conclusion we obtain from Lemma \ref{lem:numberofparticles} that the expected number of particles hitting the trees growing from an interval $[a,b]\subset\partial \BH$ up to time $t$ equals the expectation of $\int_0^t\|I_s\|ds$ i.e. the area between the curves $\{F_s^{-1}(a)\}_{s\le t}$ and $\{F_s^{-1}(b)\}_{s\le t}$.

\begin{theorem}
For any $a<b$, there are $0<c<C<\infty$ such that for $\tilde{a}=\sqrt{t}a,~\tilde{b}=\sqrt{t}b$,
$$
\ev\left[\lim_{n\rightarrow\infty}\sum_{(u,x)\in A_{t,n}}\ind_{\big\{x\in [F_u^{-1}(\tilde{a}),F_u^{-1}(\tilde{b})]\big\}}\right]\in(c,C)\cdot t^{3/2}
.$$ 
\end{theorem}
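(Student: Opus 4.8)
The plan is to recognise that, given the identity recorded just after Lemma \ref{lem:numberofparticles} together with the martingale property of the harmonic measure, this theorem is essentially a one-line computation. Write $I_s = [F_s^{-1}(\tilde a), F_s^{-1}(\tilde b)]$, so that in the notation of Section \ref{sec:finite_trees} we have $\|I_s\| = H_{(\tilde a,\tilde b)}(s)$. First I would apply the identity stated just after Lemma \ref{lem:numberofparticles} — which follows from $\ev[M_t]=0$ — to get
\[
\ev\left[\lim_{n\to\infty}\sum_{(u,x)\in A_{t,n}}\ind_{\{x\in I_u\}}\right] = \ev\left[\int_0^t \|I_s\|\,ds\right].
\]

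Next, since $H_{(\tilde a,\tilde b)}(s)$ is a (positive) harmonic measure it is nonnegative, so Tonelli's theorem permits interchanging the expectation and the $s$–integral:
\[
\ev\left[\int_0^t \|I_s\|\,ds\right] = \int_0^t \ev\left[H_{(\tilde a,\tilde b)}(s)\right]\,ds.
\]
By Theorem \ref{thm:martingale}, $s\mapsto H_{(\tilde a,\tilde b)}(s)$ is a positive martingale, and because $F_0$ is the identity its (deterministic) initial value is $H_{(\tilde a,\tilde b)}(0) = \tilde b - \tilde a = \sqrt t\,(b-a)$; hence $\ev[H_{(\tilde a,\tilde b)}(s)] = \sqrt t\,(b-a)$ for all $s \ge 0$, and the right-hand side equals $t \cdot \sqrt t\,(b-a) = (b-a)\,t^{3/2}$.

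This proves the statement with any constants $0 < c < b-a < C < \infty$ (for instance $c = (b-a)/2$, $C = 2(b-a)$); in fact the expectation is \emph{exactly} $(b-a)\,t^{3/2}$. Conceptually, the $\sqrt t$–scaling of the endpoints is precisely what is needed here: the area expectation is always $t\,\|I_0\|$, so an initial harmonic measure of order $\sqrt t$ is what converts this into the $t^{3/2}$ growth predicted by Meakin, and it is also the natural scale at which the harmonic-measure martingale — which behaves diffusively, cf.\ Theorem \ref{thm:harmbm} — survives for a time of order $t$. There is no serious obstacle in the argument; the only points that deserve a word of justification are the validity of the Tonelli interchange, immediate from positivity of the harmonic measure, and the fact, supplied by Theorem \ref{thm:martingale}, that this martingale has constant expectation equal to its deterministic starting value.
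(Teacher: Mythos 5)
Your proof is correct, but it takes a genuinely different route from the paper's. The paper also starts from Lemma \ref{lem:numberofparticles} to reduce the claim to estimating $\ev\bigl[\int_0^t\|I_s\|\,ds\bigr]$, but then invokes the diffusive scaling limit of Theorem \ref{thm:harmbm} (convergence of $\{\|I_{st}\|/\sqrt{t}\}_{s\le1}$ to a Brownian motion absorbed at $0$) together with tail estimates for the area under a Brownian excursion, and integrates the resulting Gaussian tail bound to get two-sided bounds of order $t^{3/2}$. You instead combine Tonelli (legitimate, since $\|I_s\|\ge0$) with Theorem \ref{thm:martingale}: since $s\mapsto H_{(\tilde a,\tilde b)}(s)$ is a true martingale started from the deterministic value $\sqrt{t}(b-a)$, its expectation is constant, giving $\ev\bigl[\int_0^t\|I_s\|\,ds\bigr]=(b-a)t^{3/2}$ exactly. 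Your argument is shorter, yields the sharp constant rather than just two-sided bounds, and sidesteps a point the paper leaves implicit, namely that passing from convergence in distribution of $\int_0^t\|I_s\|\,ds/t^{3/2}$ to convergence of its expectation requires a uniform integrability argument. What the paper's heavier route buys is distributional information — a Gaussian-type tail bound on the number of particles, not merely its mean — which is of independent interest but is not needed for the statement as written. Both arguments rest on results proved earlier in the paper (Theorem \ref{thm:martingale} in your case, Theorem \ref{thm:harmbm} in theirs), so neither is circular.
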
%\note{Does the b-a die in the limit or can we only take $b-a\approx \sqrt{t}$?}

\begin{proof}
By Theorem \ref{thm:harmbm} $\{\|I_{st}\|\}_{s\le 1}/\sqrt{t}$ converges in distribution to $(b-a)+4B_s(0)/\sqrt{3}$ with Dirichlet (absorbing) boundary at the origin.
Thus for a large enough $t>0$, using the estimate for the area under the graph of Brownian  excursion \cite[Theorem 1.1]{janson2007tail}, there are $c<C$ such that
\bae
\prob\left(\int_0^t\|I_s\|ds> x\right)\in (c,C)\cdot e^{-6(xt^{-3/2})^2}%\Big(1+O\left((xt^{-3/2})^{-2}\right)\Big)
.\eae
By integrating over $x$ we obtain that 
$$
\ev\left[\int_0^t\|I_s\|ds\right]\in (c,C)\cdot t^{3/2}
.$$

Using Lemma \ref{lem:numberofparticles} we obtain the statement.
\end{proof}

{Lastly we wish to show that with high probability at times proportional to $t$ there is a tree growing from $\left[\tilde{a},\tilde{b}\right]$ of height proportional to $t$.  }

\begin{proposition} Let $\epsilon(t)=o(\sqrt{t})$ then,
$$\prob\left(\exists \frac{t}{\epsilon(t)}\le s\le t:\sup_{F_s^{-1}(\tilde{a})\le x\le F_s^{-1}(\tilde{b})}\Im F_s(x)>\frac{\pi t}{6\epsilon(t)}\right)\underset{t\rightarrow\infty}{\longrightarrow} 1 .$$
\end{proposition}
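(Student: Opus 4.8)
The plan is to produce, with probability tending to one, a single \emph{witness point} at the one specific time $s_0:=t/\epsilon(t)$, rather than trying to control the whole time window; we assume throughout that $\epsilon(t)\to\infty$ (this is the regime of interest, and note that $\epsilon(t)\ge 1$ eventually or the window is empty). The starting observation is the identity $\sup_{F_{s}^{-1}(\tilde a)\le x\le F_{s}^{-1}(\tilde b)}\Im F_{s}(x)=\sup_{z\in\CT_{[\tilde a,\tilde b]}(s)}\Im z$, valid because $F_s$ maps the interval $[F_s^{-1}(\tilde a),F_s^{-1}(\tilde b)]$ onto $[\tilde a,\tilde b]$ together with all trees rooted in it; so the quantity of interest is just the height of $\CT_{[\tilde a,\tilde b]}(s)$. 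Fix the deterministic reference point $\tilde c:=\sqrt t\,(a+b)/2$, write $I_s:=[F_s^{-1}(\tilde a),F_s^{-1}(\tilde b)]$ and $H:=\pi t/(6\epsilon(t))=\pi s_0/6$. It then suffices to show that, with probability $\to1$, both (a) $\tilde c\in I_{s_0}$ and (b) $\Im F_{s_0}(\tilde c)>H$ hold: on that event $F_{s_0}(\tilde c)\in\CT_{[\tilde a,\tilde b]}(s_0)$ has imaginary part exceeding $H$, and $s_0\in[t/\epsilon(t),t]$.

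For (a): recall from the proof of Theorem \ref{thm:harmbm} that for real $z$ the displacement $F_t^{-1}(z)-z$ is a mean-zero martingale with $\ev\big[|F_t^{-1}(z)-z|^2\big]=\tfrac43 t$, coming from $\int_{-\infty}^\infty|\tilde s_x^{-1}(0)|^2\,dx=\tfrac43$. Hence $\ev\big[|F_{s_0}^{-1}(\tilde a)-\tilde a|^2\big]=\tfrac43 s_0=\tfrac{4t}{3\epsilon(t)}$, and likewise at $\tilde b$; since $\epsilon(t)\to\infty$ this is $o\big((\sqrt t\,(b-a))^2\big)$, so Chebyshev gives $\prob\big(|F_{s_0}^{-1}(\tilde a)-\tilde a|>\tfrac{\sqrt t\,(b-a)}{8}\big)\to0$ and the same for $\tilde b$. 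On the intersection of the two complementary high-probability events, the interval $I_{s_0}$ contains $\big[\tilde a+\tfrac{\sqrt t(b-a)}{8},\ \tilde b-\tfrac{\sqrt t(b-a)}{8}\big]$, which contains $\tilde c$.

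For (b): translation invariance of the process (Proposition \ref{prop:ergodic}) together with the fixed-time equality in law $F_{s_0}\stackrel{d}{=}\tilde F_{s_0}$ gives $\Im F_{s_0}(\tilde c)\stackrel{d}{=}\Im\tilde F_{s_0}(0)$. The proof of Lemma \ref{lem:prelimto_all_powerful}, applied at $z=0$, gives $\ev\big[|\tilde F_{s_0}(0)-i\pi s_0/2|^2\big]\le C s_0$, so by Chebyshev $\prob\big(\Im\tilde F_{s_0}(0)\le\pi s_0/3\big)\le\prob\big(|\tilde F_{s_0}(0)-i\pi s_0/2|\ge\pi s_0/6\big)\le \tfrac{36C}{\pi^2 s_0}=\tfrac{36C\,\epsilon(t)}{\pi^2 t}\to0$, using $\epsilon(t)=o(t)$. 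Since $\pi s_0/3>H$, this shows $\prob(\Im F_{s_0}(\tilde c)>H)\to1$. Intersecting the events of (a) and (b), whose failure probabilities sum to $o(1)$, yields the proposition.

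The only point needing care is the choice of time scale: the argument must be run at $s_0=t/\epsilon(t)=o(t)$ and not at $s=t$. It is precisely for $s=o(t)$ that the harmonic-measure martingale $F_s^{-1}(\tilde a)-\tilde a$ has had too little time to move on the scale $\sqrt t$ of the interval $[\tilde a,\tilde b]$, so that the deterministic point $\tilde c$ lies in the random interval $I_s$ with probability tending to one; at the final time $t$ this containment holds only with probability bounded away from both $0$ and $1$, and a direct argument there fails. Monotonicity of $\CT_{[\tilde a,\tilde b]}(s)$ in $s$ then gives, as a by-product, that the height at time $t$ also exceeds $H$ with probability $\to1$.
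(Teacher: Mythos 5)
Your argument is correct, and it takes a genuinely different route from the paper's. The paper partitions $[-t/2,t/2]$ into $t^{3/4}$ points $\zeta_i$, invokes the coalescing Brownian motion limit of $s\mapsto F_s^{-1}(\tilde a),F_s^{-1}(\tilde b)$ (Theorem \ref{thm:harmbm}) to guarantee that some $\zeta_i$ lies in $I_s$ before the two boundary trajectories merge, and then applies Theorem \ref{thm:scaling} to all the $\zeta_i$ simultaneously; this needs a union bound over polynomially many points and functional convergence in $s$, neither of which is spelled out there. You instead work at the single time $s_0=t/\epsilon(t)$ and the single deterministic point $\tilde c$, and use only two second-moment bounds already available in the paper: $\ev\bigl[|F_{s_0}^{-1}(\tilde a)-\tilde a|^2\bigr]=\tfrac43 s_0=o\bigl(t(b-a)^2\bigr)$ from the martingale computation in the proof of Theorem \ref{thm:harmbm}, giving $\tilde c\in I_{s_0}$ w.h.p., and $\ev\bigl[\sup_{s\le s_0}|\tilde F_s(0)-i\pi s/2|^2\bigr]\le Cs_0$ from Lemma \ref{lem:prelimto_all_powerful} (transported to $F_{s_0}(\tilde c)$ via translation invariance and the fixed-time equality in law of $F$ and $\tilde F$), giving $\Im F_{s_0}(\tilde c)>\pi s_0/3>H$ w.h.p. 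This is more elementary and more quantitative, yielding explicit $O(1/\epsilon(t))+O(\epsilon(t)/t)$ failure probabilities, and it only uses $\epsilon(t)\to\infty$ together with $\epsilon(t)=o(t)$, which is weaker than the stated $o(\sqrt t)$. Your added hypothesis $\epsilon(t)\to\infty$ is indeed necessary and is implicit in the paper as well: it is precisely what prevents the two inverse-image trajectories from coalescing before time $s_0$ (cf.\ the paper's closing remark about the role of $\epsilon(t)$), and for bounded $\epsilon$ the conclusion can fail with probability bounded away from $0$. The only loose end is your final ``by-product'' sentence: monotonicity does give that the tree height at time $t$ exceeds $H$, but the supremum in the statement at time $t$ is taken over the possibly collapsed interval $I_t$, so that remark would need separate justification; since it is not used in the proof, it does not affect correctness.
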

\begin{proof}
Without loss of generality assume that $a<0$ and $b>0$. Partition the interval [-t/2,t/2] to $t^{3/4}$ subintervals of length $t^{1/4}$. Call the end points $\{\zeta_i\}_{i=1}^{t^{3/4}}$.

By Theorem \ref{thm:harmbm}, $\{F_s^{-1}(\tilde{a})\}_{s\le t}/\sqrt{t}$ and $\{F_s^{-1}(\tilde{b})\}_{s\le t}/\sqrt{t}$  converge in distribution to independent Brownian motions $\{B^a_s\}_{s\le 1}$ and $\{B^b_s\}_{s\le 1}$, starting from $a$ and $b$ respectively, until they intersect and then they perform the same Brownian motion. 

Thus as $t\rightarrow \infty$ the stated probability is greater than 
$$
\lim_{t\rightarrow\infty}\prob\left(\exists \frac{t}{\epsilon(t)}\le s\le t: \left\{\bigcap_{i=1}^{t^{3/4}}\Im F_s(\zeta_i)>\frac{\pi t}{6\epsilon(t)}\right\}\cap\left\{\bigcup_{i=1}^{t^{3/4}}\zeta_i/\sqrt{t}\in \Big[B_s^a,B_s^b\Big]\right\}\cap\{B_s^a\neq B_s^b\}\right)
.$$ 
Which converges to $1$ by Lemma \ref{thm:scaling} and basic properties of Brownian motion. Note that the $\epsilon(t)$ correction arises to guarantee the high probability of non intersection of $B_s^a$ and $B_s^b$ before time $s$.
\end{proof}

\section*{Acknowledgements}
{The authors would like to thank Itai Benjamini, Jacob Kagan and Gady Kozma for fruitful discussions at the beginning of this project. Amanda Turner would like to thank the University of Geneva for a visiting position in 2019/20 during which time this work was completed.}

\bibliography{ri}

\begin{thebibliography}{10}

\bibitem{antunovic2014stationary}
T.~Antunovi{\'c} and E.~B. Procaccia.
\newblock Stationary {E}den model on amenable groups.
\newblock {\em arXiv preprint arXiv:1410.4944}, 2014.

\bibitem{berger2014stretched}
N.~Berger, J.~Kagan, and E.~B. Procaccia.
\newblock Stretched {IDLA}.
\newblock {\em ALEA}, 11(2):471--481, 2014.

\bibitem{ellis2011coalescing}
T.~Ellis.
\newblock {\em Coalescing Stochastic Flows Driven by {P}oisson Random Measure
  and Convergence to the {B}rownian Web}.
\newblock PhD thesis, University of Cambridge, 2011.

\bibitem{hastings1998laplacian}
M.~B. Hastings and L.~S. Levitov.
\newblock {L}aplacian growth as one-dimensional turbulence.
\newblock {\em Physica D: Nonlinear Phenomena}, 116(1):244--252, 1998.

\bibitem{janson2007tail}
S.~Janson and G.~Louchard.
\newblock Tail estimates for the {B}rownian excursion area and other {B}rownian
  areas.
\newblock {\em Electronic Journal of Probability}, 12:1600--1632, 2007.

\bibitem{jacob}
J.~Kagan.
\newblock Ph.d thesis.
\newblock 2018.

\bibitem{lopez2015geodesic}
S.~I. Lopez and L.~Pimentel.
\newblock Geodesic forests in the last-passage percolation.
\newblock {\em arXiv preprint arXiv:1510.01552}, 2015.

\bibitem{meakin1983diffusion}
P.~Meakin.
\newblock Diffusion-controlled deposition on fibers and surfaces.
\newblock {\em Physical Review A}, 27(5):2616, 1983.

\bibitem{mu2019scaling}
Y.~Mu, E.~B. Procaccia, and Y.~Zhang.
\newblock Scaling limit of {DLA} on a long line segment.
\newblock {\em arXiv preprint arXiv:1912.02370}, 2019.

\bibitem{norris2019scaling}
J.~Norris, V.~Silvestri, and A.~Turner.
\newblock Scaling limits for planar aggregation with subcritical fluctuations.
\newblock {\em arXiv preprint arXiv:1902.01376}, 2019.

\bibitem{norris2012hastings}
J.~Norris and A.~Turner.
\newblock {H}astings--{L}evitov aggregation in the small-particle limit.
\newblock {\em Communications in Mathematical Physics}, 316(3):809--841, 2012.

\bibitem{procaccia2018stationary}
E.~B. Procaccia, J.~Ye, and Y.~Zhang.
\newblock Stationary harmonic measure as the scaling limit of truncated
  harmonic measure.
\newblock {\em arXiv preprint arXiv:1811.04793}, 2018.

\bibitem{procaccia2019stationary}
E.~B. Procaccia, J.~Ye, and Y.~Zhang.
\newblock Stationary {DLA} is well defined.
\newblock {\em arXiv preprint arXiv:1907.00381}, 2019.

\bibitem{procaccia2017sets}
E.~B. Procaccia and Y.~Zhang.
\newblock On sets of zero stationary harmonic measure.
\newblock {\em arXiv preprint arXiv:1711.01013}, 2017.

\bibitem{procaccia2017stationary}
E.~B. Procaccia and Y.~Zhang.
\newblock Stationary harmonic measure and {DLA} in the upper half plane.
\newblock {\em arXiv preprint arXiv:1711.01011}, 2017.

\bibitem{racz1983diffusion}
Z.~R{\'a}cz and T.~Vicsek.
\newblock Diffusion-controlled deposition: Cluster statistics and scaling.
\newblock {\em Physical review letters}, 51(26):2382, 1983.

\bibitem{rohde2005some}
S.~Rohde and M.~Zinsmeister.
\newblock Some remarks on {L}aplacian growth.
\newblock {\em Topology and its Applications}, 152(1):26--43, 2005.

\bibitem{silvestri2017fluctuation}
V.~Silvestri.
\newblock Fluctuation results for {H}astings--{L}evitov planar growth.
\newblock {\em Probability Theory and Related Fields}, 167(1-2):417--460, 2017.

\bibitem{vicsek1984pattern}
T.~Vicsek.
\newblock Pattern formation in diffusion-limited aggregation.
\newblock {\em Physical review letters}, 53(24):2281, 1984.

\end{thebibliography}
\bibliographystyle{plain}

\section*{Appendices}

\appendix

\section{Proof of Lemma \ref{lemma:slit_estimates}}\label{app:3.3}
\begin{proof}
\begin{itemize}
\item[(i)]
Without loss of generality we may suppose that $z=iy$ for some $y>0$. First suppose that $y > 2$. Then
\begin{align*}
|\SM_x(iy)-iy|^2 &= \left | \sqrt{(iy-x)^2 - 1} - (iy-x) \right |^2 \\
&= (x^2 + y^2) \left | \sqrt{1-(iy-x)^{-2} } - 1 \right |^2 \\
&\leq (x^2+y^2)^{-1}.
\end{align*} 
Hence
\[
\int_{-\infty}^{\infty} |\SM_x(z)-z|^2 dx \leq \frac{\pi}{y}.
\]
Now suppose that $y \leq 2$. Then if $|x| \leq 2$, a very crude estimate yields
\[
|\SM_x(iy)-iy|^2 \leq 36,
\]
whereas if $|x|>2$, then the same argument as above gives
\[
|\SM_x(iy)-iy|^2 \leq (x^2+y^2)^{-1} \leq x^{-2}.
\]
Hence
\begin{align*}
\int_{-\infty}^{\infty} |\SM_x(z)-z|^2 dx 
\leq \int_{-2}^2 36 dx + 2\int_2^{\infty} x^{-2} dx \leq 145.
\end{align*}
The bound in \eqref{eq:l2sx} follows. 

For the bounds \eqref{eq:l1sxprime} and  \eqref{eq:sxmonoton}, note that similarly if either $|x|$ or $y>2$,
\begin{align*}
|\SM_x'(iy)-1|^2 &= \left | \frac{iy-x}{\sqrt{(iy-x)^2 - 1}} - 1 \right |^2 \\
&=  \left | (1-(iy-x)^{-2} )^{-1/2} - 1 \right |^2 \\
&\leq (x^2+y^2)^{-2}.
\end{align*} 
If $|x|, y \leq 2$, then one may write $x=\mathrm{sgn}(x)+w$ where $|w| \leq 1$. A straightforward computation gives
\[
|\SM_x'(iy)-1|^2 \leq \frac{4 \sqrt{2}}{\sqrt{w^2 + y^2}}+ 2.
\]
Hence, if $y>2$, then
\[
\int_{-\infty}^\infty |\SM_x'(z)-1| dx \leq \frac{\pi}{y}
\]
and
\[
\int_{-\infty}^\infty |\SM_x'(z)-1|^2 dx \leq \frac{\pi}{2y^3}
\]
and if $y \leq 2$, then
\[
\int_{-\infty}^\infty |\SM_x'(z)-1| dx \leq 1+4 \sqrt{2 + 8 y^{-1}} \leq \frac{7}{y}
\]
and
\[
\int_{-\infty}^\infty |\SM_x'(z)-1|^2 dx \leq 18\left(1+  \log (y^{-1}) \mathbbm{1}_{\{y<1\}}\right),
\]
where we have not attempted to optimise the absolute constants.
\item[(ii)] Fix $z \in \HH$ and consider the contour defined by the four curves $\gamma_i$, $i=1,2,3,4$ where
\begin{align*}
\gamma_1(t) &= z-t &\mbox{ for } &t \in [-n,n] \\
\gamma_2(t) &= z-n + it &\mbox{ for } &t \in [0,R] \\
\gamma_3(t) &= z+t + iR &\mbox{ for } &t \in [-n,n] \\ 
\gamma_4(t) &= z+n - it &\mbox{ for } &t \in [0,R],
\end{align*}
for some $n^{1/2} \ll R \ll n$.
Since the function $g(w) = \sqrt{w^2 - 1}-w$ is analytic within the contour, 
\[
\int_{-n}^n (\SM_x(z)-z) dx = \oint_{\gamma_1} g(w) dw = - \left ( \oint_{\gamma_2} + \oint_{\gamma_3} + \oint_{\gamma_4}\right ) g(w) dw.
\]
Exactly the same estimates as in the proof of (i) show that if $n-|Re(z)| > 2$, then 
\[
\left | \oint_{\gamma_i} g(w) dw \right | \leq \frac{\pi}{n-|\Re z|},
\]
otherwise
\[
\left | \oint_{\gamma_i} g(w) dw \right | \leq 12 + \log R \leq 12 + \log n.
\]
Also
\begin{align*}
-\oint_{\gamma_3} g(w) dw 
&= \int_{-n}^n (\SM_x(z+iR)-(z+iR)) dx \\
&= \int_{-n}^n \frac{1}{2(z+iR-x)} dx + O\left ( \frac{n}{R^3} \right )\\
&= \int_{-n}^n \frac{\bar{z}-iR -x}{2((\Re z-x)^2+(\Im z + R)^2)} dx + O\left ( \frac{n}{R^3} \right ) \\
&= \frac{1}{2} \log \frac{(\Re z+n)^2+(\Im z + R)^2}{(\Re z-n)^2+(\Im z + R)^2} \\
& \quad + i \frac{1}{2} \left ( \tan^{-1} \left ( \frac{\Re z+n}{\Im z + R} \right ) - \tan^{-1} \left ( \frac{\Re z-n}{\Im z + R} \right ) \right ) + O\left ( \frac{n}{R^3} \right ). 
\end{align*}
Hence
\[
\oint_{\gamma_i} g(w) dw \to 0
\]
as $n \to \infty$ for $i = 2$ and $4$ and so
\[
\lim_{n \to \infty} \int_{-n}^n (\SM_x(z)-z) dx = \frac{i\pi}{2}
\]
as required.

\item[(iii)]
Bounds \eqref{eq:n0sx} and \eqref{eq:n1sx} follow by exactly the same method as in (ii), but without taking $n \to \infty$. Bound \eqref{eq:n2sx} is established similarly to (i).
\end{itemize}
\end{proof}

\section{Proof of Lemma \ref{lem:bound_diff}}

%Here I am trying to provide a simple argument showing that for every $t$ and almost every $x \in \BR$, the mapping $F_t$ is differentiable in $x$. Of course, we will show it for the mapping $\tilde F_t$ which has the same distribution but is easier to work with. We use the proof of existence (Chapter 4) and analyse boundary differentiability along this proof.

As mentioned before, it suffices to prove the statement for the function $\tilde F_t$.

Fix $t$.

By Theorem \ref{thm:constworks}, 
%In Lemma 4.2 we (hopefully) prove the convergence of the sequence of functions $\big(G^{(n)}_{t}\big)$ to the function $\tilde F_t$.
$\tilde F_t$ is the limit of the sequence $\big(G^{(n)}_{t}\big)$ in the topology of mean-squared convergence on compact subset.

For every $n$, $G^{(n)}_{t}$ is a composition of finitely many slit functions of the type $\tilde s_x(z) = x + \sqrt{(z-x)^2 - 1}$. Fix $\delta$ small. The mapping $\tilde s_x$ is conformal from the half plane to (a subset of) itself. However, it is well defined and analytic (though not injective) in the domain $H_{u,\delta} = \BH \cup B_\delta(u)$, provided that $u\in\BR$ is such that  $\{x, x-1, x+1\} \cap B_{100\delta ^ {1/2}(u)} = \emptyset$. We write $\zeta = 100\delta ^ {1/2}$.% for future use.

\ignore{
Verification of statement:
assume $x = 0$, and let $u \in \BR$ be such that $B_\delta(u) \cap \{-1,0,1\} = \emptyset$.  
$\tilde s(z) = \sqrt{z^2 -1}$. In the case $|u| > 1 + 100\delta  ^ {1/2}$, we have that $z^2$ sends $B_\delta(u)$ into $B_{3|u|\delta}(u^2)$ i.e. $z^2 -1$ sends $B_\delta(u)$ into the half plane with positive real part, so $\sqrt{}$ has a branch which is well defined and $\tilde s$ is analytic on $H_{u,\delta}$. If $100\delta  ^ {1/2} < |u| < 1-100\delta  ^ {1/2}$, then $z^2$ sends $B_\delta(u)$ into $B_{3|u|\delta}(u^2) \subseteq \{z: 0 < \Re z < 1\}$. So $z^2 -1$ sends $B_\delta(u)$ into he half plane with negative real part, where again square root has a well defined branch. Thus in this case too, $\tilde s$ is analytic on $H_{u,\delta}$.
}

We now want to control the probability that the function $G^{(n)}_{t}$ is analytic in $H_{0,\delta}$. We remember that 
\[
G^{(n)}_{t} = \tilde s_{x_N} \circ \tilde s_{x_{N-1}} \circ \cdots \circ \tilde s_{x_2} \circ \tilde s_{x_1}
\]
where $x_1,\ldots x_N$ are the Poisson arrivals. Looking at the distribution of $x_1,\ldots x_N$, we get that $N \sim \mbox{Pois(2nt)}$ and $(x_k)$ are i.i.d. $\mbox{Unif}[-n,n]$.

Write $V_k = \tilde s_{x_k} \circ \tilde s_{x_{k-1}} \circ \cdots \circ \tilde s_{x_2} \circ \tilde s_{x_1}$. Then $G^{(n)}_{t} = V_N$, and we will recursively estimate the probability that $V_k$ is analytic in $H_{0,\delta}$.

Note that 
\[
\frac{1}{n - (1 + \zeta)} \int_{1+\zeta}^n |\tilde s'(u)|du = \frac{1}{{n - (1 + \zeta)}} \int_{1+\zeta}^n \tilde s'(u)du 
= \frac{\tilde s(n) - \tilde s(1 + \zeta)}{{n - (1 + \zeta)}} < \frac{n+1}{n-2},
\]
which means that the average derivative of $\tilde s_{x}$ at zero, conditioned on the event  $\{|x| \geq 1+\zeta\}$ is bounded by $\frac{n+1}{n-2}$.

Let $u_0 = 0$ and let $I_0 = [-\delta,\delta]$. Also let $u_k = V_k(0) = \tilde s_{x_k}(u_{k-1})$ and $I_k = V_k(I_0) = \tilde s_{x_k}(I_{k-1})$. 
Let $T=\inf\{k: |x_k - u_{k-1}| < 1 + 2\zeta \}$. Then,
\[
E\Big(|I_k| \Big| T > k\Big) \leq 2\delta \left(\frac{n+1}{n-2}\right)^k \leq 2\delta \exp(3k/n).
\]

We now break into three events. Let 
\begin{align*}
&A_1 = \Big\{T > N\Big\},\\
&A_2 = \Big\{T \leq N \ ; \ B_{2\zeta + |I_{T-1}|}(u_{T-1}) \cap \{x_T, x_T -1, x_T+1\} = \emptyset \Big\},\\
&A_3 = \Big\{T \leq N \ ; \ B_{2\zeta + |I_{T-1}|}(u_{T-1}) \cap \{x_T, x_T -1, x_T+1\} \neq \emptyset \Big\}.
\end{align*}

Under the event $A_1$, the function $G^{(n)}_{t}$  is analytic (though not injective) in the domain $H_{u,\delta}$. The same happens under the event $A_2$, because in under this event $V_{T-1}$ is analytic in $B_{\delta}(u)$, and then $\tilde s_{x_T}$ analytically sends $V_{T-1}(B_{\delta}(u))$ into the upper half plane, and then all further slit functions maintain analiticity.
Under the event $A_3$ analyticity is not guaranteed, thus we need to bound the probability of this event.

But
\begin{align*}
\prob(A_3) &= \sum_{k = 1}^\infty \prob(T=k)\prob(A_3 | T=k) \\
&= \sum_{k = 1}^\infty \prob(T=k)\prob(N \geq k)\prob\Big( B_{2\zeta+ |I_{k-1}|}(u_{k-1}) \cap \{x_k, x_k -1, x_k+1\} \neq \emptyset \Big| T=k \Big) \\
&\leq \sum_{k = 1}^\infty \prob(T=k)\prob(N \geq k) \cdot 3 \Bigg(2\zeta + \ev\Big(|I_{k-1}| \Big| T=k\Big)\Bigg) \\
&\leq 6\zeta + 2\delta \sum_{k = 1}^\infty \prob(k = T)\prob(N \geq k) \exp(3k/n)\\
&\leq 6\zeta + 2\delta \sup\Big\{ \prob(N \geq k) \exp(3k/n) : k\geq 1 \Big\} \\
&\leq 6\zeta + C\delta
\end{align*}

for some constant $C = C(t)$ where the last inequality follows from the fact that $N$ is a $nt$ Poisson variable. Write $\psi = 6\zeta + C\delta$ and note that $\psi$ goes to zero as $\delta$ does.

So we have established that for every $n$, the function $G^{(n)}_{t}$ is analytic in $H_{0,\delta}$ with probability at least $1 - \psi$. Thus w.p. at least $1 - \psi$ there exists a subsequence
$\big( G^{(n_k)}_{t} \big)$ s.t. for all $k$, $G^{(n_k)}_{t}$ is analytic in $H_{0,\delta}$. Also, $G^{(n_k)}_{t}$ converges to $\tilde F_t$ in $L^2$. Thus $\tilde F_t$ is differentiable at 0 w.p. at least $1 - \psi$, and as, by the choice of $\delta$, $\psi$ is arbitrarily small we get that $\tilde F_t$ is a.s. differentiable at 0.

\end{document}